 \numberwithin{equation}{section}
\title[A study of 1--D TGV$^{2}$ minimisation problem]{A study of the one dimensional  total generalised variation regularisation problem. }
\author{Konstantinos Papafitsoros}
\address{Department of Applied Mathematics and Theoretical Physics, Cambridge Centre for Analysis, University of Cambridge, United Kingdom}
\email{k.papafitsoros@maths.cam.ac.uk}
\author{Kristian Bredies}
\address{Institute for Mathematics and Scientific Computing, Karl-Franzens University of Graz, Austria}
\email{kristian.bredies@uni-graz.at}
\date{\today}
\keywords{Exact solution of inverse problems,  total generalised variation regularisation}
\begin{document}

\begin{abstract}
In this paper we study the one dimensional second order total generalised variation regularisation (TGV) problem with $L^{2}$ data fitting term.  We examine some properties of this model and we calculate exact solutions using simple piecewise affine functions  as data terms. We investigate how these solutions behave with respect to the TGV parameters and we verify our results using numerical experiments.
\end{abstract}

\maketitle
\tableofcontents
\section{Introduction}

\subsection{Context and related work}
During the last years, the seek for appropriate regularisation techniques has become a major issue in the field of inverse problems and more particularly, in the area of mathematical imaging. 
 A regulariser of good quality is one that provides reconstructed images that are close to the desired result quantitatively and are aesthetically pleasing as well. A particularly efficient regulariser of this kind has been the total generalised variation of second order, $\mathrm{TGV}_{\beta,\alpha}^{2}$, introduced recently in \cite{TGV}. For an open and bounded domain $\Omega\subseteq\mathbb{R}^{d}$ and positive parameters $\alpha$ and $\beta$ the $\mathrm{TGV}_{\beta,\alpha}^{2}$ functional reads
\begin{equation}\label{tgvdef1}
\mathrm{TGV}_{\beta,\alpha}^{2}(u):= \sup\left\{\int_{\Omega} u\,\mathrm{div}^{2}v~dx:\; v\in \mathcal{C}_{c}^{2}(\Omega,S^{d\times d}),\;\|v\|_{\infty}\le \beta,\; \|\mathrm{div}v\|_{\infty}\le \alpha\right\},
\end{equation}
where $S^{d\times d}$ is the set of the $d\times d$ symmetric matrices and $u\in L^{1}(\Omega)$. 

The total generalised variation--based regularisation has the ability to adapt to the regularity of the data and images restored with this method are typically piecewise smooth that is to say, not only the discontinuities but also  the affine structures are preserved. This is in contrast to total variation--based reconstructed images \cite{rudin1992nonlinear, ChambolleLions} which exhibit an undesirable piecewise constant structure (staircasing effect). Recall the definition of the total variation (TV) functional:
\begin{equation}\label{tvdef}
\mathrm{TV}(u):=\sup\left \{\int_{\Omega}u\,\mathrm{div}v~dx:\;v\in \mathcal{C}_{c}^{1}(\Omega,\mathbb{R}^{d}),\;\|v\|_{\infty}\le 1 \right \}.
\end{equation}

Some successful applications of total generalised variation in image reconstruction and in related tasks have been done in image denoising \cite{TGV, tgvcolour}, image deblurring \cite{BredValk, tgvcolour}, reconstruction of MRI images \cite{knoll2011second}, diffusion tensor imaging \cite{doi:10.1137/120867172} and JPEG decompression \cite{bredies2012artifact}.
In most of the tasks above the reconstructed image is obtained as a minimiser of a variational problem of the type
\begin{equation}\label{general_variational}
\min_{u}\; \frac{1}{s}\int_{\Omega}|Tu-f|^{s}dx+\mathrm{TGV}_{\beta,\alpha}^{2}(u),
\end{equation}
where $s=1$ or $2$, $T$ is a linear operator and $f$ are the given corrupted data. In the task of denoising images corrupted by Gaussian noise, we have $s=2$ and $T$ is the identity operator. This is the case we are focusing on this paper.

Before the introduction of the total generalised variation, total variation had been one of the most popular choices for imaging tasks. As a consequence, much work has been done in order to investigate the mathematical properties of TV regularisation. The purpose of the present paper is to do the same for TGV regularisation. So far, most emphasis has been given to the study of the exact solutions of the following family of variational problems 
\begin{equation}\label{TV_variational}
\min_{u}\frac{1}{s}\int_{\Omega}|u-f|^{s}+\alpha\, \mathrm{TV}(u),
\end{equation}
where $\alpha$ is a positive parameter, $\Omega\subseteq \mathbb{R}^{d}$ is an open domain with $d=1$ or $2$ and $s=1$ or $2$.

The model with $s=2$ is typically used to denoise images corrupted by Gaussian noise. 
 In \cite{meyer2001oscillating}, Meyer
showed that if $f$ is a characteristic function of a circle then the solution of \eqref{TV_variational}  is equal to the characteristic function of the same circle with a decreased height. This loss of contrast, which is more intense as the value of the parameter $\alpha$ increases, is a characteristic feature of the total variation regularisation. In \cite{caselles2007discontinuity}, Caselles, Chambolle and Novaga showed that the jump set of the solution $u$ is contained in the jump set of the data $f$. In \cite{strong2003edge}, Strong and Chan provided exact solutions of TV regularisation for simple 1D and rotationally invariant 2D data and similar results are obtained through techniques analysed by Ring in \cite{ring2000structural}. In \cite{grasmair2007equivalence}, Grasmair proved the equivalence of TV regularisation and the taut-string algorithm, see also the work of Hinterberger et al. \cite{hinterberger2003tube}.

For results regarding exact solutions of the TV regularisation problem \eqref{TV_variational} with $s=1$ we refer the reader to \cite{chanL1, duvalL1, nikolova2002minimizers} among others.

Related work regarding the study of the analytical properties of some higher order regularisation methods can also be found in the literature. In \cite{poschl2008characterization} P\"{o}schl and Scherzer studied exact solutions in the case where the regulariser is the total variation of an arbitrary order derivative of $u$. In \cite{dal2009higher} a higher-order non-convex model is investigated by Dal Maso et al.. Regarding exact solutions of TGV regularisation, properties of analytical solutions for the one dimensional case for the $L^{1}$ data fitting case are studied in \cite{BrediesL1}. Finally in \cite{TGVbregman}, Benning et al. investigate the capability of infimal convolution, second order TV and TGV regularisation to recover certain data exactly apart from a loss of contrast.  We are also aware
of an unpublished work of P\"oschl and Scherzer \cite{scherzertgv} which was prepared at the
same time, independently of our work and also addresses exact solutions of $L^2$--TGV$^2$ in dimension
one. Their focus is on determining how to choose the parameters
such that the solutions do not coincide with the respective $L^2$--TV and
$L^2$--TV$^2$ solutions.

In this paper we are studying further the one dimensional second order TGV$_{\beta,\alpha}^{2}$ regularisation problem with $L^{2}$ data fitting term. The motivation is to understand deeper how this kind of regularisation behaves, by computing exact solutions for simple data functions and to investigate how these solutions change with respect to the values of the parameters $\alpha$ and $\beta$. \\

\subsection{Outline of the paper}
In Section \ref{section:preliminaries}, we fix our notation and we recall some basic notions regarding Radon measures and functions of bounded variation. We also introduce the TGV functional along with some of its basic properties.

In Section \ref{section:optimality}, we formulate our problem, i.e., one dimensional second order TGV regularisation with $L^{2}$ data fitting term and we derive the corresponding optimality conditions in the spirit of \cite{BrediesL1}, using Fenchel duality.

In Section \ref{section:properties}, we examine some of the basic properties of the exact solutions, e.g., behaviour near and away from the boundary, preservation of discontinuities and facts about the $L^{2}$--linear regression. We also show that  at least for even data, the TGV and TV regularisations coincide under some conditions. 

In Section \ref{section:computation}, which is the main section of the paper, we compute exact solutions of the $\mathrm{TGV}_{\beta,\alpha}^{2}$ regularisation problem for three different data functions: a piecewise constant function with a single jump, a piecewise affine function with a single jump and a hat function. Emphasis is given on how the characteristic features  of the solutions (discontinuities, piecewise affinity) are affected by the parameters $\alpha$ and $\beta$.

Finally, in Section \ref{section:numerics}, our computations are verified using numerical experiments.

\section{Preliminaries}\label{section:preliminaries}
In this section we recall some  definitions and results that we are going to use and we also fix our notation.
\subsection{Radon  measures and functions of bounded variation}
 We denote with $\mathcal{M}(\Omega)$ the space of real valued finite Radon measures. If $\mu$ is a Radon measure then $|\mu|$ denotes its total variation measure. From the Radon-Nikod\'{y}m theorem we have that there exists a $|\mu|$--integrable function, denoted with $\mathrm{sgn}(\mu)$ such that $\mu=\mathrm{sgn}(\mu)|\mu|$ and has the property that $\mathrm{sgn}(\mu)=1$, $|\mu|$--a.e.. The one dimensional Lebesgue measure is denoted with $\mathcal{L}$.
 
Recall that for an open $\Omega\subseteq \mathbb{R}^{d}$ we say that a function $u\in L^{1}(\Omega)$ is of bounded variation if its distributional derivative can be represented by a finite Radon measure $Du$, i.e.,
\[\int_{\Omega}uv'~dx=-\int_{\Omega} v~dDu,\quad \forall v\in C_{c}^{\infty}(\Omega).\]
Equivalently, $u$ is a function of bounded variation if it has finite total variation $\mathrm{TV}(u)$, where 
\[\mathrm{TV}(u):=\sup\left \{\int_{\Omega}u\,\mathrm{div}v~dx:\;v\in \mathcal{C}_{c}^{1}(\Omega,\mathbb{R}^{d}),\;\|v\|_{\infty}\le 1 \right \},
\]
and in that case $\mathrm{TV}(u)$ is equal to $|Du|(\Omega)$ which is the total variation measure of $Du$ evaluated in $\Omega$, see for example \cite{AmbrosioBV}. The space of functions of bounded variation is denoted with $\mathrm{BV}(\Omega)$ which is a Banach space under the norm $\|u\|_{\mathrm{BV}(\Omega)}:=\|u\|_{L^{1}(\Omega)}+|Du|(\Omega)$. The distributional derivative $Du$ of a BV function can be decomposed into the absolutely continuous $D^{a}u$  and the singular part $D^{s}u$ with respect to $\mathcal{L}$, i.e., $Du=D^{a}u+D^{s}u$ with $D^{a}u=u'\mathcal{L}$. Here $u'$ denotes the density function of $D^{\alpha}u$ with respect to $\mathcal{L}$.

 We say that a sequence of BV functions $(u_{n})_{n\in \mathbb{N}}$ converges to $u$ weakly$^{\ast}$ in $\mathrm{BV}(\Omega)$ if it converges in $L^{1}$ and the sequence of measures $(Du_{n})_{n\in \mathbb{N}}$ converges to $Du$ weakly$^{\ast}$ in the sense of Radon measures, i.e., $\lim_{n\to\infty}\int_{\Omega}v~dDu_{n}=\int_{\Omega}v~dDu$ for all $v\in \mathcal{C}_{0}(\Omega)$. If $\Omega$ has Lipschitz boundary then every sequence which is bounded with respect to $\|\cdot\|_{\mathrm{BV}(\Omega)}$ has a weakly$^{\ast}$ converging subsequence, see \cite{AmbrosioBV}.

For the rest of the paper $\Omega$ will be a bounded open interval of the real line, i.e., $\Omega=(a,b)$.

In the one dimensional case the notion of the precise representative is a useful one. For a function $u\in \mathrm{BV}(a,b)$ the pointwise variation of $u$ in $(a,b)$ is defined as 
\[
\mathrm{pV}(u,(a,b))=\sup\left \{\sum_{i=1}^{n-1}|u(x_{i+1})-u(x_{i})|:\; n\ge 2,\;a<x_{1}<\cdots<x_{n}<b \right \}.
\]
There exist representatives $\tilde{u}$ of $u$ that have the property
\[
\mathrm{pV}(\tilde{u},(a,b))=\inf\left \{\mathrm{pV}(v):\; v=u\;\; a.e. \right \}=|Du|(\Omega),
\]
and those are called good representatives of $u$. There exists a unique $c\in \mathbb{R}$ such that the functions
\[u^{l}(x)=c+Du((a,x)),\quad u^{r}=c+Du((a,x])\]
are good representatives where $u^{l}$ and $u^{r}$ are left and right continuous respectively.
We define the functions
\[\overline{u}(x)=\max \{u^{l}(x),u^{r}(x)\},\quad \underline{u}(x)=\min\{u^{l}(x),u^{r}(x)\},\]
which are also good representatives of $u$. The jump set of $u$, $J_{u}$, is defined as the set of atoms of $u$, i.e., $J_{u}=\{x\in (a,b):\; Du(\{x\})\ne 0\}$. We refer again the reader to \cite {AmbrosioBV}.

Another useful concept is the one of the Radon norm $\|\cdot\|_{\mathcal{M}}$. For a distribution $\mathcal{T}$ in $\Omega$ we define
\[\|\mathcal{T}\|_{\mathcal{M}}=\sup\left\{\langle \mathcal{T},v\rangle:\; v\in \mathcal{C}_{c}^{\infty}(\Omega),\; \|v\|_{\infty}\le 1 \right \}.\]
We have that $\mathcal{T}$ is represented by a finite Radon measure, say $\mu$, if and only if $\|\mathcal{T}\|_{\mathcal{M}}$ is finite and in that case it is equal to $|\mu|(\Omega)$. If $\mathcal{T}$ is represented by an integrable function $u$ then $\|u\|_{\mathcal{M}}=\|u\|_{L^{1}(\Omega)}$. If $u\in \mathrm{BV}(\Omega)$ then $\|Du\|_{\mathcal{M}}=|Du|(\Omega)$.

\subsection{Convex analysis}
If $X$, $X^{\ast}$ are two vector spaces placed in duality and $F$ is a real convex function defined on $X$, then $F^{\ast}$ denotes the convex conjugate of $F$:
\[F^{\ast}(x^{\ast})=\sup_{x\in X}\; \langle x^{\ast},x\rangle-F(x).\] 
If $A\subseteq X$ then $\mathcal{I}_{A}$ denotes the indicator function of $A$:
\[\mathcal{I}(x)=
\begin{cases}
0 & \text{if }\;x\in A,\\
+\infty& \text{if }\;x\notin A.
\end{cases}
\]

\subsection{Basic facts about TGV} In the arbitrary dimension, the second order total generalised variation functional is defined as 
\[\mathrm{TGV}_{\beta,\alpha}^{2}(u):= \sup\left\{\int_{\Omega} u\,\mathrm{div}^{2}v~dx:\; v\in \mathcal{C}_{c}^{2}(\Omega,S^{d\times d}),\;\|v\|_{\infty}\le \beta,\; \|\mathrm{div}v\|_{\infty}\le \alpha\right\},
\]
where $S^{d\times d}$ is the set of the $d\times d$ symmetric matrices and $u\in L^{1}(\Omega)$, see \cite{TGV}. This is a proper, convex, rotationally invariant functional, lower semicontinuous on each $L^{p}(\Omega)$, $1\le p<\infty$.
 In \cite{BredValk}, an equivalent formulation was proved:
\[\mathrm{TGV}_{\beta,\alpha}^{2}(u)=\min_{w\in\mathrm{BD}(\Omega)}\; \alpha \|Du-w\|_{\mathcal{M}}+\beta\|Dw\|_{\mathcal{M}},\]
where $\mathrm{BD}(\Omega)$ is the space of functions of bounded deformation \cite{temam1985mathematical}.
 In the one dimensional case the two formulations read
 \begin{eqnarray}
\mathrm{TGV}_{\beta,\alpha}^{2}(u)&=& \sup\left\{\int_{\Omega} uv''~dx:\; v\in \mathcal{C}_{c}^{2}(\Omega),\;\|v\|_{\infty}\le \beta,\; \|v'\|_{\infty}\le \alpha\right\}\label{tgv1d_sup}\\
\mathrm{TGV}_{\beta,\alpha}^{2}(u)&=&\min_{w\in\mathrm{BV}(\Omega)}\; \alpha \|Du-w\|_{\mathcal{M}}+\beta\|Dw\|_{\mathcal{M}}.\label{tgv1d_min}\
\end{eqnarray}
In \cite{BredValk}, it was also proved that there exist positive constants $c<C$ that depend only at the size of the domain such that for every $u$ with finite total generalised variation we have
\[c\|u\|_{\mathrm{BV}(\Omega)}\le \|u\|_{L^{1}(\Omega)}+\mathrm{TGV}_{\beta,\alpha}^{2}(u)\le C\|u\|_{\mathrm{BV}(\Omega)}.\]

\section{Formulation of the problem and optimality conditions}\label{section:optimality}
The problem we are interested to study is the one dimensional second order TGV minimisation problem with $L^{2}$ data fitting term, i.e,
\begin{equation}\label{basic_problem_gen}
\min_{u\in\mathrm{BV}(\Omega)}\; \frac{1}{2}\int_{\Omega}(u-f)^{2}dx+\mathrm{TGV}_{\beta,\alpha}^{2}(u),
\end{equation}
for positive parameters $\alpha$, $\beta$ and $f\in L^{2}(\Omega)$. Using the alternative formulation of TGV, \eqref{tgv1d_min}, we can see that problem \eqref{basic_problem_gen} is equivalent to 
\begin{equation}
\tag{$\mathcal{P}$}
\min_{\substack{u\in\text{BV}(\Omega)\\w\in\text{BV}(\Omega)}} \;\frac{1}{2}\int_{\Omega}(u-f)^{2}dx+ \alpha\|Du-w\|_{\mathcal{M}}+\beta\|Dw\|_{\mathcal{M}}.
\label{P}
\end{equation}
Let us note here that a solution $(u,w)$ for \eqref{P} exists, see \cite{BredValk}. Uniqueness is guaranteed for $u$ but not for $w$.
In fact $w$ is a solution to an $L^{1}$--$\mathrm{TV}$ minimisation problem:
\begin{eqnarray*}
w&=&\underset{w\in\mathrm{BV}(\Omega)}{\operatorname{argmin}}\; \alpha \|Du-w\|_{\mathcal{M}}+\beta\|Dw\|_{\mathcal{M}}\iff\\
w&=&\underset{w\in\mathrm{BV}(\Omega)}{\operatorname{argmin}}\; \|D^{a}u+D^{s}u-w\|_{\mathcal{M}}+\frac{\beta}{\alpha}|Dw|(\Omega)\iff\\
w&=&\underset{w\in\mathrm{BV}(\Omega)}{\operatorname{argmin}}\;\int_{\Omega}|u'-w|~dx+\frac{\beta}{\alpha}\mathrm{TV}(w).
\end{eqnarray*}

In order to study exact solutions of the problem \eqref{P} we essentially follow \cite{BrediesL1} where the corresponding $L^{1}$--data fitting term case is examined. We identify the predual problem of \eqref{P} and derive the optimality conditions using Fenchel--Rockafellar duality.

Consider the following problem
\begin{equation}
\tag{$\mathcal{P}'$}
\sup\left \{\int_{\Omega}fv''~dx-\frac{1}{2}\int_{\Omega}(v'')^{2}:\; v\in H_{0}^{2}(\Omega),\;\|v\|_{\infty}\le \beta,\; \|v'\|_{\infty}\le \alpha \right\}.
\label{Pprime}
\end{equation}
We shall prove that \eqref{Pprime} is the predual problem of \eqref{P}. Firstly, we show that \eqref{Pprime} has a solution indeed.

\newtheorem{first}{Proposition}[section]
\begin{first}
The problem \eqref{P} admits a unique solution in $H_{0}^{2}(\Omega)$.
\end{first}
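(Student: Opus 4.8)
The statement concerns the minimisation \eqref{P}, so the plan is to establish existence of a minimiser by the direct method and uniqueness of the primal variable by strict convexity; the qualifier ``in $H_0^2(\Omega)$'' I read as identifying the accompanying unique maximiser of the predual \eqref{Pprime}, which Fenchel--Rockafellar duality pairs with \eqref{P}, and whose uniqueness I would record as well.

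First I would show existence for \eqref{P}. The functional $J(u,w)=\frac12\int_\Omega(u-f)^2\,dx+\alpha\|Du-w\|_{\mathcal M}+\beta\|Dw\|_{\mathcal M}$ is nonnegative and proper (e.g.\ $J(0,0)=\tfrac12\|f\|_{L^2(\Omega)}^2<\infty$). Along a minimising sequence $(u_n,w_n)$, the data term bounds $\|u_n\|_{L^2(\Omega)}$, hence $\|u_n\|_{L^1(\Omega)}$ on the bounded interval, while the penalty $\alpha\|Du_n-w_n\|_{\mathcal M}+\beta\|Dw_n\|_{\mathcal M}$ is bounded and dominates $\mathrm{TGV}_{\beta,\alpha}^2(u_n)$; the equivalence $c\|u_n\|_{\mathrm{BV}(\Omega)}\le\|u_n\|_{L^1(\Omega)}+\mathrm{TGV}_{\beta,\alpha}^2(u_n)$ from Section~\ref{section:preliminaries} then bounds $(u_n)$ in $\mathrm{BV}(\Omega)$. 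Likewise $\beta|Dw_n|(\Omega)$ bounds the variation of $w_n$ and $\|w_n\|_{L^1(\Omega)}\le\|Du_n-w_n\|_{\mathcal M}+|Du_n|(\Omega)$ bounds its $L^1$ norm, so $(w_n)$ is bounded in $\mathrm{BV}(\Omega)$. Extracting weakly$^\ast$ convergent subsequences $u_n\rightharpoonup^\ast u$, $w_n\rightharpoonup^\ast w$ (possible since $\Omega$ is an interval), I pass to the limit: the data term is lower semicontinuous under the induced convergence by convexity and the boundedness of $(u_n)$ in $L^2(\Omega)$, and since $Du_n-w_n\mathcal L\rightharpoonup^\ast Du-w\mathcal L$ the weak$^\ast$ lower semicontinuity of the Radon norm yields $J(u,w)\le\liminf_n J(u_n,w_n)$, so $(u,w)$ solves \eqref{P}.

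Uniqueness of $u$ then follows from strict convexity: the data term is strictly convex in $u$ and the two penalties are jointly convex, so for two minimisers the midpoint would strictly lower $J$ unless the $u$--components agree; $w$ need not be unique, consistent with its characterisation as an $L^1$--$\mathrm{TV}$ minimiser noted above. Finally, for the solution in $H_0^2(\Omega)$ I would maximise $G(v)=\int_\Omega fv''\,dx-\frac12\int_\Omega(v'')^2\,dx$ over $K=\{v\in H_0^2(\Omega):\|v\|_\infty\le\beta,\ \|v'\|_\infty\le\alpha\}$: completing the square bounds $\|v_n''\|_{L^2(\Omega)}$ along a maximising sequence, $\|v''\|_{L^2(\Omega)}$ is an equivalent norm on $H_0^2(\Omega)$, and the one--dimensional compact embedding $H^2(\Omega)\hookrightarrow\hookrightarrow C^1(\overline\Omega)$ preserves the constraints in the weak $H^2$ limit, producing a maximiser; uniqueness comes from strict concavity of $G$ in $v''$ together with injectivity of $v\mapsto v''$ on $H_0^2(\Omega)$ (here $v''\equiv0$ with homogeneous boundary data forces $v\equiv0$). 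The main obstacle is the joint weak$^\ast$ lower semicontinuity of $(u,w)\mapsto\alpha\|Du-w\|_{\mathcal M}$, which needs weak$^\ast$ convergence of $Du_n$ combined with strong $L^1$ convergence of $w_n$, and, for the $H_0^2$ part, the preservation of the $L^\infty$ constraints under the weak $H^2$ limit.
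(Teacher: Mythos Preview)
Your reading of the statement is understandable, but note that it is a typo in the paper: the proposition should say \eqref{Pprime}, not \eqref{P}. This is clear from the surrounding text (``Firstly, we show that \eqref{Pprime} has a solution indeed'') and from the paper's proof, which is entirely about the predual maximisation and never touches the BV problem; existence for \eqref{P} itself is dispatched earlier by a reference to \cite{BredValk}.

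Once the statement is read correctly, your second paragraph (the $H_0^2$ part) is the relevant one, and it is essentially the paper's proof: bound $\|v_n''\|_{L^2}$ along a maximising sequence, get a bounded sequence in $H_0^2$, extract a weak limit, check the limit lies in $K$, and conclude by weak upper semicontinuity and strict concavity. The only minor difference is how you verify that the weak limit stays in $K$: the paper argues that $K$ is convex and norm-closed, hence weakly closed, while you invoke the compact embedding $H^2(\Omega)\hookrightarrow C^1(\overline\Omega)$ to pass the $L^\infty$ constraints to the limit. Both work. Your first paragraph (the direct-method existence for \eqref{P}) is correct but superfluous for this particular proposition.
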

\begin{proof}
Because of the estimate $\|v\|_{\infty}+\|v'\|_{\infty}\le C\|v\|_{H_{0}^{2}(\Omega)}$, for all $v\in H_{0}^{2}(\Omega)$, \cite{EvansPDEs}, we have that the set $K=\left\{v\in H_{0}^{2}(\Omega):\;\|v\|_{\infty}\le \beta,\; \|v'\|_{\infty}\le \alpha  \right\}$ is normed-closed and  since it is convex, it is also weakly closed. Moreover since $(\frac{1}{2}\|\cdot\|_{L^{2}(\Omega)}^{2})^{\ast}=\frac{1}{2}\|\cdot\|_{L^{2}(\Omega)}^{2}$ we have that
 \[\sup_{v\in L^{2}(\Omega)} \int_{\Omega}f v ~dx -\frac{1}{2}\int_{\Omega}v^{2}~dx=\Big(\frac{1}{2}\|\cdot\|_{L^{2}(\Omega)}^{2}\Big)^{\ast}(f)=\frac{1}{2}\|f\|_{L^{2}(\Omega)}^{2}.\]
Thus the supremum in \eqref{Pprime} is finite and we denote it with $\sup\mathcal{P}'$. Consider now a maximising sequence $(v_{n})_{n\in\mathbb{N}}$ such that 
\[\int_{\Omega} f v_{n}''~dx-\frac{1}{2}\int_{\Omega}(v_{n}'')^{2}~dx\to \sup\mathcal{P}'.\]
Then there exists a positive constant $M$ such that
\begin{equation}\label{boundpredual}
\left |\int_{\Omega} f v_{n}''~dx-\frac{1}{2}\int_{\Omega}(v_{n}'')^{2}~dx\right|\le M,\quad \forall n\in\mathbb{N}.
\end{equation}
We will show that the sequence $\left (\|v_{n}''\|_{L^{2}(\Omega)} \right)_{n\in\mathbb{N}}$ is bounded as well.
Suppose not, then $\limsup_{n} \int_{\Omega}(v_{n}'')^{2}~dx=\infty$. Thus, passing to a subsequence if necessary we have
\[
\int_{\Omega} f v_{n}''~dx-\frac{1}{2}\int_{\Omega}(v_{n}'')^{2}~dx\le \|f\|_{L^{2}(\Omega)}\|v_{n}''\|_{L^{2}(\Omega)}-\frac{1}{2}\|v_{n}''\|_{L^{2}(\Omega)}^{2}\to -\infty, 
\]
which is a contradiction from \eqref{boundpredual}. Hence, we have that the sequence $(v_{n})_{n\in\mathbb{N}}$ is bounded in $H_{0}^{2}(\Omega)$ and from the reflexivity of that space we get the existence of a subsequence $(v_{n_{k}})_{k\in\mathbb{N}}$ and a function $v\in H_{0}^{2}(\Omega)$ such that $v_{n_{k}}\to v$ weakly. Since $K$ is weakly closed we have that $v\in K$. Moreover the functional to maximise is weakly upper semicontinuous since $\int_{\Omega}f(\cdot)''~dx$ is continuous in $H_{0}^{2}(\Omega)$ and $-\frac{1}{2}\|\cdot\|_{L^{2}(\Omega)}^{2}$ is weakly upper semicontinuous. Thus
\[\sup \mathcal{P}' \ge \int_{\Omega} f v''~dx-\frac{1}{2}\int_{\Omega}(v'')^{2}~dx \ge\limsup_{k} \int_{\Omega} f v_{n_{k}}''~dx-\frac{1}{2}\int_{\Omega}(v_{n_{k}}'')^{2}~dx=\sup \mathcal{P}'\]
which means that $v$ is a solution to \eqref{Pprime}. This solution is unique as the maximising functional is strictly concave, defined on a convex domain.
\end{proof}

Define now $X=H_{0}^{2}(\Omega)\times H_{0}^{1}(\Omega)$, $Y=H_{0}^{1}(\Omega)\times L^{2}(\Omega)$, $\Lambda:X\to Y$, $F_{1}:X\to (-\infty,\infty]$, $F_{2}:Y\to (-\infty,\infty]$ with 
\begin{eqnarray}
\Lambda(v,\omega)&=&(\omega+v',\omega'),\\
F_{1}(v,\omega)&=& \mathcal{I}_{\{\|\cdot\|_{\infty}\le \beta\}}(v)+\mathcal{I}_{\{\|\cdot\|_{\infty}\le \alpha\}}(\omega),\\
F_{2}(\phi,\psi)&=&\mathcal{I}_{0}(\phi)+\int_{\Omega} f\psi~dx+\frac{1}{2}\int_{\Omega}\psi^{2}~dx.
\end{eqnarray}

It is easy to check that with these definitions the problem \eqref{Pprime} is equivalent to
\begin{equation}\label{temam_primal}
-\min_{(v,\omega)\in X} F_{1}(v,\omega)+F_{2}(\Lambda(v,\omega)).
\end{equation}
The dual problem of \eqref{temam_primal} is 
\begin{equation}\label{temam_dual}
\min_{(w,u)\in Y^{\ast}}F_{1}^{\ast}(-\Lambda^{\ast}(w,u))+F_{2}^{\ast}(w,u),
\end{equation}
see for example \cite{ekeland1976convex}. Moreover, $X$, $Y$ are Banach spaces, $F_{1}$, $F_{2}$ are proper lower semicontinuous functions and the following condition holds:
\begin{equation}\label{A.B.}
Y=\bigcup_{\lambda\ge 0} \lambda (\mathrm{dom}(F_{2})-\Lambda(\mathrm{dom}(F_{1}))).
\end{equation}
Then, see \cite{attouch1986duality}, we have that no duality gap occurs, i.e.,
\[\left ( \min_{(v,\omega)\in X} F_{1}(v,\omega)+F_{2}(\Lambda(v,\omega))\right ) +\left (\min_{(w,u)\in Y^{\ast}}F_{1}^{\ast}(-\Lambda^{\ast}(w,u))+F_{2}^{\ast}(w,u) \right )=0. \]
The fact that condition \eqref{A.B.} holds follows from the corresponding theorem in \cite{BrediesL1} where the same condition was proved for the same  $X$, $Y$, $F_{1}$, $\Lambda$ and for a $F_{2}$ with a smaller domain.
The next step is to identify the dual problem \eqref{temam_dual} with the problem \eqref{P}.

\newtheorem{Identify_dual}[first]{Proposition}
\begin{Identify_dual}
The problem
\begin{equation}\label{temam_dual_2}
\min_{(w,u)\in Y^{\ast}}F_{1}^{\ast}(-\Lambda^{\ast}(w,u))+F_{2}^{\ast}(w,u),
\end{equation}
is equivalent to \eqref{P} in the sense that $(w,u)$ solve \eqref{temam_dual_2} if and only if $(w,u)\in \mathrm{BV}(\Omega)^{2}$ and they solve \eqref{P}.
\end{Identify_dual}
\begin{proof}
The proof follows closely the proof of the corresponding theorem in \cite{BrediesL1}. Firstly, we look at $F_{1}^{\ast}$. For a pair $(\sigma,\tau)\in H_{0}^{2}(\Omega)^{\ast}\times H_{0}^{1}(\Omega)^{\ast}$ we have that 
\[F_{1}^{\ast}(\sigma,\tau)=\sup_{\substack{(v,\omega)\in H_{0}^{2}(\Omega)\times H_{0}^{1}(\Omega)\\ \|v\|_{\infty}\le \beta \\ \|\omega\|_{\infty}\le \alpha}}  \langle\sigma,v \rangle+\langle\tau,\omega \rangle=\beta\sup_{\substack{v\in H_{0}^{2}(\Omega)\\ \|v\|_{\infty}\le 1 }}  \langle\sigma,v \rangle+\alpha\sup_{\substack{w\in H_{0}^{1}(\Omega)\\  \|\omega\|_{\infty}\le 1}}  \langle\tau,\omega \rangle.\]
Using density arguments, one can check that
\begin{equation}\label{F1_ast}
F_{1}^{\ast}(\sigma,\tau)=\beta\sup_{\substack{v\in \mathcal{C}_{c}^{\infty}(\Omega)\\ \|v\|_{\infty}\le 1 }}  \langle\sigma,v \rangle+\alpha\sup_{\substack{w\in \mathcal{C}_{c}^{\infty}(\Omega)\\  \|\omega\|_{\infty}\le 1}}  \langle\tau,\omega \rangle.
\end{equation}
We also have for every $(w,u)\in Y^{\ast}$ and $(v,\omega)\in X$,
\begin{equation}\label{Lambda_ast}
-({\underbrace{\Lambda^{\ast}({\underbrace {w,u}_{\in Y^{\ast}}})}_{\in X^{\ast}}})({\underbrace{v,\omega}_{\in X}})=-(w,u)(\Lambda(v,\omega))=-\langle w, v'+\omega \rangle-\langle u,\omega' \rangle.
\end{equation}
Combining \eqref{F1_ast} and \eqref{Lambda_ast} we have that for every $(w,u)\in Y^{\ast}$ 
\begin{equation}\label{beforedstr}
F_{1}^{\ast}(-(\Lambda^{\ast}(w,u)))=\beta\sup_{\substack{v\in \mathcal{C}_{c}^{\infty}(\Omega) \\ \|v\|_{\infty}\le 1 }}  -\langle w,v' \rangle+\alpha\sup_{\substack{\omega\in  \mathcal{C}_{c}^{\infty}(\Omega)\\  \|\omega\|_{\infty}\le 1}}  -\langle w,\omega \rangle -\langle u,\omega'\rangle.
\end{equation}
Since $w\in H_{0}^{1}(\Omega)^{\ast}$ and $u\in L^{2}(\Omega)$, they can be considered as distributions and thus \eqref{beforedstr} can be written
\begin{equation}\label{afterdstr}
F_{1}^{\ast}(-(\Lambda^{\ast}(w,u)))=\beta\sup_{\substack{v\in \mathcal{C}_{c}^{\infty}(\Omega) \\ \|v\|_{\infty}\le 1 }}  \langle Dw,v \rangle+\alpha\sup_{\substack{\omega\in  \mathcal{C}_{c}^{\infty}(\Omega)\\  \|\omega\|_{\infty}\le 1}}  \langle Du-w,\omega\rangle.
\end{equation}
Since we are considering the minimisation problem \eqref{temam_dual_2}, both terms in \eqref{afterdstr} should be finite and thus $Dw$ is a Radon measure, $w$ an $L^{1}$ function (thus a BV function) and $Du$ a Radon measure, i.e., $u$ is a BV function as well. Thus if $(w,u)$ is a pair of minimisers of \eqref{temam_dual_2} we have that $(w,u)\in \mathrm{BV}(\Omega)^{2}$ and 
\[F_{1}^{\ast}(-(\Lambda^{\ast}(w,u)))=\beta \|Dw\|_{\mathcal{M}}+\alpha\|Du-w\|_{\mathcal{M}}.\]
Finally we have 
\begin{eqnarray*}
F_{2}^{\ast}(w,u)&=&\sup_{\substack{(\phi,\psi)\in Y \\ \phi=0}} \langle w,\phi \rangle +\int_{\Omega}u\psi~dx-\int_{\Omega}f\psi ~dx-\frac{1}{2}\int_{\Omega}\psi^{2}~dx\\
&=&\sup_{\psi\in L^{2}(\Omega)}\int_{\Omega}(u-f)\psi~dx-\frac{1}{2}\int_{\Omega}\psi^{2}~dx\\
&=&\left (\frac{1}{2}\|\cdot\|_{L^{2}(\Omega)}^{2} \right )^{\ast}(u-f)\\&=&\frac{1}{2}\int_{\Omega}(u-f)^{2}~dx.
\end{eqnarray*}
\end{proof}

We are now ready to derive the optimality conditions that link the solutions of the problems \eqref{P} and \eqref{Pprime}. We need the following definition and lemma from \cite{BrediesL1}:
\newtheorem{Sgn_def}[first]{Definition}
\begin{Sgn_def}
Let $\mu\in\mathcal{M}(\Omega)$. We define the set-valued sign, $\mathrm{Sgn}(\mu)$ as
\[\mathrm{Sgn}(\mu)=\{v\in L^{\infty}(\Omega)\cap L^{\infty}(\Omega,|\mu|):\; \|v\|_{\infty}\le 1,\;v=\mathrm{sgn}(\mu),\;|\mu|-a.e.\}\]
\end{Sgn_def}

\newtheorem{Sgn_lemma}[first]{Lemma}
\begin{Sgn_lemma}[\cite{BrediesL1}]
If $\mu\in\mathcal{M}(\Omega)$ then
\[\mathrm{Sgn}(\mu)\cap\mathcal{C}_{0}(\Omega)= \partial \|\cdot\|_{\mathcal{M}}(\mu)\cap\mathcal{C}_{0}(\Omega).\]
\end{Sgn_lemma}

The following proposition characterises minimisers of the problem \eqref{P}.
\newtheorem{Optimality_conditions}[first]{Proposition}
\begin{Optimality_conditions}[Optimality conditions]
A pair $(u,w)\in \mathrm{BV}(\Omega)^{2}$ is a minimiser for \eqref{P} if and only if there exists a function $v\in H_{0}^{2}(\Omega)$, such that
\begin{equation}\label{C1}
\tag{C$_{f}$}
v''=f-u,
\end{equation} 
\begin{equation}\label{C2}
\tag{C$_{\alpha}$}
-v'\in \alpha\,\mathrm{Sgn}(Du-w),
\end{equation}
\begin{equation}\label{C3}
\tag{C$_{\beta}$}
v\in \beta\,\mathrm{Sgn}(Dw).
\end{equation}
\end{Optimality_conditions}
\begin{proof}
Since there is no duality gap we have that the solutions $(v,\omega)$ and   $(w,u)$ of \eqref{Pprime} and \eqref{P} respectively (equivalently \eqref{temam_primal} and \eqref{temam_dual}) are linked through the optimality conditions
\begin{eqnarray}
(v,\omega)&\in&\partial F_{1}^{\ast}(-\Lambda^{\ast}(w,u)),\label{opt1}\\
\Lambda(v,\omega)&\in&\partial F_{2}^{\ast}(w,u),\label{opt2}
\end{eqnarray}
see for example \cite{ekeland1976convex}. Moreover, $(w,u)$ is a solution for \eqref{P} if there exists a $v\in H_{0}^{2}(\Omega)$ and $\omega=-v'$ such that \eqref{opt1}--\eqref{opt2} hold.
We have that condition \eqref{opt1} is equivalent to
\begin{eqnarray*}
&\Leftrightarrow& F_{1}^{\ast}(-\Lambda^{\ast}(w,u))+\langle (v,\omega),(\sigma,\tau)+\Lambda^{\ast}(w,u) \rangle \le F_{1}^{\ast}(\sigma,\tau),\;\;\forall (\sigma,\tau)\in X^{\ast}\\
&\Leftrightarrow& \alpha \|Du-w\|_{\mathcal{M}}+\beta\|Dw\|_{\mathcal{M}}+\langle (v,\omega),(\sigma-Dw,\tau-(Du-w)) \rangle \le \alpha \|\tau\|_{\mathcal{M}}+\beta\|\sigma\|_{\mathcal{M}} ,\;\;\forall (\sigma,\tau)\in X^{\ast}\\
&\Leftrightarrow& \alpha \|Du-w\|_{\mathcal{M}}+\beta\|Dw\|_{\mathcal{M}}+\langle v,\sigma-Dw\rangle + \langle \omega, \tau-(Du-w) \rangle \le \alpha \|\tau\|_{\mathcal{M}}+\beta\|\sigma\|_{\mathcal{M}} ,\;\;\forall (\sigma,\tau)\in X^{\ast}\\
&\Leftrightarrow& \begin{cases}  \alpha \|Du-w\|_{\mathcal{M}}+ \langle \omega, \tau-(Du-w) \rangle \le \alpha \|\tau\|_{\mathcal{M}}, \;\;\forall \tau\in H_{0}^{1}(\Omega)^{\ast}\\
\beta\|Dw\|_{\mathcal{M}}+\langle v,\sigma-Dw\rangle \le \beta \|\sigma\|_{\mathcal{M}}, \;\;\forall \sigma\in H_{0}^{2}(\Omega)^{\ast}
  \end{cases}\\
  &\Leftrightarrow& \begin{cases}  \alpha \|Du-w\|_{\mathcal{M}}+ \langle \omega, \psi-(Du-w) \rangle \le \alpha \|\tau\|_{\mathcal{M}}, \;\;\forall \tau\in \mathcal{M}(\Omega)\\
\beta\|Dw\|_{\mathcal{M}}+\langle v,\sigma-Dw\rangle \le \beta \|\sigma\|_{\mathcal{M}}, \;\;\forall \tau\in \mathcal{M}(\Omega)  \end{cases}\\
&\Leftrightarrow&\begin{cases}\omega\in\partial \|\cdot\|_{\mathcal{M}}(Du-w)\\ v\in\partial \beta \|\cdot\|_{\mathcal{M}}(Dw) \end{cases}
\end{eqnarray*}
which is equivalent to the following:
\begin{equation}\label{oa}
\omega\in \alpha\, \text{Sgn}(Du-w),
\end{equation}
\begin{equation}\label{ob}
v\in \beta\,\text{Sgn}(Dw).
\end{equation}
Now the second condition is equivalent to 
\begin{eqnarray}
&\Leftrightarrow& F_{2}^{\ast}(w,u)+\langle \Lambda(v,\omega),(\hat{w},\hat{u})-(w,u) \rangle \le F_{2}^{\ast}(\hat{w},\hat{u}),\;\;\forall (\hat{w},\hat{u})\in Y^{\ast}\nonumber\\
&\Leftrightarrow& \frac{1}{2}\int_{\Omega}(u-f)^{2}dx+\langle w+v',\hat{w}-w \rangle+\langle w',\hat{u}-u \rangle\le \frac{1}{2}\int_{\Omega}(\hat{u}-f)^{2}dx,\;\;\forall (\hat{w},\hat{u})\in Y^{\ast} \nonumber\\
&\Leftrightarrow&
\begin{cases}
0+\langle w+v',\hat{w}-w \rangle\le 0,\;\;\forall \hat{w}\in H_{0}^{1}(\Omega)^{\ast}\\
\frac{1}{2}\int_{\Omega}(u-f)^{2}dx+\langle w',\hat{u}-u \rangle\le \frac{1}{2}\int_{\Omega}(\hat{u}-f)^{2}dx,\;\;\forall \hat{u}\in L^{2}(\Omega)^{\ast}
\end{cases}\nonumber\\
&\Leftrightarrow&
\begin{cases}
w=-v'\\
w'\in \partial \left (\frac{1}{2}\|\cdot -f\|_{L^{2}(\Omega)}^{2}\right)(u)
\end{cases}\nonumber\\
&\Leftrightarrow&
\begin{cases}
w=-v'\\
w'=u-f
\end{cases}\label{wminusv}
\end{eqnarray}
Combining \eqref{wminusv} with \eqref{oa} and \eqref{ob} we deduce \eqref{C1}--\eqref{C3}.
\end{proof}
We note here that the corresponding optimality conditions for the $L^{1}$--TGV$_{\beta,\alpha}^{2}$ model \cite{BrediesL1} are
\begin{eqnarray*}
v''&\in& \mathrm{Sgn}(f-u),\\
-v'&\in&\alpha\,\mathrm{Sgn}(Du-w),\\
v&\in&\beta\,\mathrm{Sgn}(Dw),
\end{eqnarray*}
In the $L^{2}$--TV case with regularising parameter $\alpha$, the optimality conditions read
\begin{eqnarray*}
v'=f-u,\\
-v\in \alpha\,\mathrm{Sgn}(Du),
\end{eqnarray*}
where $v\in H_{0}^{1}(\Omega)$, see \cite{ring2000structural}.

\section{Properties of the solutions}\label{section:properties}
Before we proceed to computations of exact solutions for simple data functions, we firstly show some properties of the one dimensional $L^{2}$--TGV$_{\beta,\alpha}^{2}$ regularisation. Recall that $\Omega$ is an open interval $(a,b)$. 
The following two propositions were proved in \cite{BrediesL1} for the one dimensional $L^{1}$--TGV$_{\beta,\alpha}^{2}$ model and they hold here as well. Their proofs are minor adjustments of the corresponding proofs for the $L^{1}$ case and we omit them.

\newtheorem{u_smaller_f}[first]{Proposition}
\begin{u_smaller_f}\label{u_smaller_fprop}
Let $f\in\mathrm{BV}(\Omega)$ and suppose that $u$, $w\in \mathrm{BV}(\Omega)$ solve \eqref{P}. Suppose that $\overline{u}<\underline{f}$ on an open interval $I\subseteq (a,b)$. Then the following hold:
\begin{enumerate}
\item $(Du-w)\lfloor I=0$, that is to say $u'=w$ on $I$ and $|D^{s}u|(I)=0$.
\item $w'=0$ on $I$ and $0\le -Dw\lfloor I\ll \delta_{x}$ for some $x\in I$.
\item The function $w=u'$ is non-increasing on $I$.
\end{enumerate}
Similarly, if $\underline{u}>\overline{f}$ on $I$ then we have
\begin{enumerate}
\item $(Du-w)\lfloor I=0$, that is to say $u'=w$ on $I$ and $|D^{s}u|(I)=0$.
\item $w'=0$ on $I$ and $0\le Dw\lfloor I\ll \delta_{x}$ for some $x\in I$.
\item The function $w=u'$ is non-decreasing on $I$.
\end{enumerate}
\end{u_smaller_f}
\begin{proof}
See corresponding proof in \cite{BrediesL1}.
\end{proof}

The following proposition tells us that the jump set of the solution $u$ is contained in the jump set of the data $f$.
\newtheorem{jumpset}[first]{Proposition}
\begin{jumpset}\label{jumpsetprop}
Let $f\in \mathrm{BV}(\Omega)$ and let
\[G_{f}:=\left\{(x,t)\in\Omega\times\mathbb{R}:\;x\in J_{f}, \;t\in[\underline{f}(x),\overline{f}(x)] \right \}.\]
If $u$ solves the $L^{2}$--$\mathrm{TGV}_{\beta,\alpha}^{2}$ minimisation problem with data function $f$, then
\begin{equation}\label{inclusion_dcts}
G_{u}\subseteq G_{f}.
\end{equation}
\end{jumpset}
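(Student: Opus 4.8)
The plan is to argue directly from the optimality conditions \eqref{C1}--\eqref{C3}. First I would recast the goal pointwise: $G_u\subseteq G_f$ is equivalent to saying that every $x_0\in J_u$ also lies in $J_f$ and satisfies $[\underline u(x_0),\overline u(x_0)]\subseteq[\underline f(x_0),\overline f(x_0)]$. So fix $x_0\in J_u$; choose $w\in\mathrm{BV}(\Omega)$ so that $(u,w)$ solves \eqref{P}, and let $v\in H_0^2(\Omega)$ be the function furnished by Proposition \ref{Optimality_conditions}. The data $f$ may be replaced by $-f$ with $(u,w,v)$ replaced by $(-u,-w,-v)$ without affecting the optimality system \eqref{C1}--\eqref{C3}, and this substitution flips the sign of $Du(\{x_0\})$; so I may assume $Du(\{x_0\})>0$, i.e.\ $u^l(x_0)<u^r(x_0)$, the opposite sign being treated in the same way.

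The key step is to extract the value of $v'$ at $x_0$ from \eqref{C2}. Since $w\in\mathrm{BV}(\Omega)$, the measure $w\mathcal{L}$ has no atoms, so $(Du-w)(\{x_0\})=Du(\{x_0\})>0$ and in particular $|Du-w|(\{x_0\})>0$. Now $v\in H_0^2(\Omega)$ is $C^1$ up to the boundary, so $-v'$ is continuous; by \eqref{C2} it agrees with $\alpha\,\mathrm{sgn}(Du-w)$ off a $|Du-w|$--null set, and since $\{x_0\}$ is not $|Du-w|$--null this forces $-v'(x_0)=\alpha\,\mathrm{sgn}\big((Du-w)(\{x_0\})\big)=\alpha$. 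Because \eqref{C2} also gives $\|v'\|_{\infty}\le\alpha$, the continuous function $v'$ attains its global minimum, equal to $-\alpha$, at the interior point $x_0$.

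Combining this with \eqref{C1} finishes the proof. From $v''=f-u\in L^2(\Omega)$ we have that $v'$ is absolutely continuous with $v'(x)-v'(x_0)=\int_{x_0}^{x}(f-u)\,dt\ge 0$ for all $x$. Evaluating at $x<x_0$ gives $\frac{1}{x_0-x}\int_{x}^{x_0}(f-u)\,dt\le 0$, and letting $x\to x_0^-$, using that the left averages of a one-dimensional $\mathrm{BV}$ function converge to its left limit, yields $u^l(x_0)\ge f^l(x_0)$; symmetrically, $x\to x_0^+$ gives $u^r(x_0)\le f^r(x_0)$. Since $u^l(x_0)<u^r(x_0)$ we obtain $f^l(x_0)\le u^l(x_0)<u^r(x_0)\le f^r(x_0)$, so $Df(\{x_0\})>0$ (hence $x_0\in J_f$) and $[\underline u(x_0),\overline u(x_0)]=[u^l(x_0),u^r(x_0)]\subseteq[f^l(x_0),f^r(x_0)]=[\underline f(x_0),\overline f(x_0)]$, which is exactly \eqref{inclusion_dcts}.

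The only delicate point is the middle step: \eqref{C2} by itself only pins $-v'$ down $|Du-w|$--a.e., so one must use that $w$ adds no atom to $Du-w$ in order to conclude that the single point $x_0$ is charged by $|Du-w|$ and hence that $-v'(x_0)=\alpha$ \emph{exactly}. Everything else---the pointwise reduction, the sign bookkeeping, and the convergence of one-sided averages to one-sided $\mathrm{BV}$ limits---is routine. (Proposition \ref{u_smaller_fprop} describes $u$ on open intervals where it lies strictly above or below $f$, but being blind to atoms it is not what is needed here.)
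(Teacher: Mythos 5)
Your proof is correct and follows essentially the same route as the proof the paper points to (the argument of \cite{BrediesL1}, adapted to the $L^{2}$ condition $v''=f-u$): the atom of $Du-w$ at $x_{0}$ forces $v'(x_{0})=\mp\alpha$ via \eqref{C2}, so the continuous function $v'$ has an interior extremum there, and integrating \eqref{C1} on either side of $x_{0}$ traps the one-sided limits of $u$ between those of $f$. The point you flag as delicate — that $w\mathcal{L}$ contributes no atom, so $|Du-w|(\{x_{0}\})=|Du(\{x_{0}\})|>0$ and \eqref{C2} pins down $v'(x_{0})$ exactly — is handled correctly.
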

\begin{proof}
See corresponding proof in \cite{BrediesL1}.
\end{proof}
 
 The following proposition says that at least away from the boundary the solution $u$ can be bounded pointwise by the data $f$.
 \newtheorem{away_from_boundary}[first]{Proposition} 
\begin{away_from_boundary}[Behaviour away from the boundary]\label{awayboundary}
Let $u$ be the solution to the problem \ref{P} with data $f$ and let $(c,d)\subseteq \{x\in (a,b):\; \overline{f}(x)<\underline{u}(x)\}$ with the property that it is maximal and $a<c<d<b$. Then 
\begin{eqnarray}
\overline{u}(c)&\in&[\underline{f}(c),\overline{f}(c)],\label{u_plus_bound}\\
\overline{u}(a)&\in&[\underline{f}(d),\overline{f}(d)],\label{u_minus_bound}
\end{eqnarray}
and 
\begin{equation}\label{u_inside_bound}
\inf_{x\in (c,d)}\overline{f}(x)\le \min_{x\in [c,d]} \underline{u}(x)\le \max_{x\in [c,d]}\overline{u}(x)\le \max\{\overline{f}(c),\overline{f}(d)\}.
\end{equation}
Analogue results hold for $\overline{u}<\underline{f}$.
\end{away_from_boundary}

\begin{proof}
Let us note first that the set $\{x\in (a,b):\; \overline{f}(x)<\underline{u}(x)\}$ is open, see \cite{BrediesL1}. We show \eqref{u_plus_bound} and \eqref{u_minus_bound} can be shown in similar fashion.

 Suppose first that $\underline{f}(c)=\overline{f}(c)$. From \eqref{inclusion_dcts} we have that $\underline{u}(c)=\overline{u}(c)=u^{+}(c)$. Suppose that $\underline{u}(c)>\overline{f}(c)$ (we work similarly for the case $\underline{u}(c)<\overline{f}(c)$). In that case we claim that there exists $\delta>0$ such that for every $x\in (c-\delta,c]$ we have $\underline{u}(x)>\overline{f}(x)$ which would be a contradiction from the maximality of $(c,d)$. Indeed, if the claim is not true there exists a sequence $(x_{n})_{n\in\mathbb{N}}$, with $x_{n}<c$, such that $x_{n}\to c$ and $\underline{u}(x_{n})\le \overline{f}(x_{n})$ for every $n$. Since both $\underline{u}$ and $\overline{f}$ are continuous on $c$ that would mean that $\underline{u}(c)\le \overline{f}(c)$.

In the case where $\underline{f}(c)<\overline{f}(c)$, i.e., $c\in J_{f}$, \eqref{u_plus_bound} follows directly from \eqref{inclusion_dcts}.

In order to prove \eqref{u_inside_bound}, notice first that Proposition \ref{u_smaller_fprop} implies that $u$ is continuous and convex at $(c,d)$, thus $\overline{u}=\underline{u}$ there. From convexity we have for every $\lambda\in [0,1]$
\begin{eqnarray}
\overline{u}(\lambda c+(1-\lambda)d)&\le& \lambda \overline{u}(c)+(1-\lambda)\overline{u}(d)\nonumber\\
								&\le&\lambda \max\{\overline{f}(c),\overline{f}(d)\}+(1-\lambda) \max\{\overline{f}(c),\overline{f}(d)\}\nonumber\\
								&=& \max\{\overline{f}(c),\overline{f}(d)\},\label{right_part}
\end{eqnarray}
Finally we have that $\overline{f}(x)<\underline{u}(x)$ for all $x\in (c,d)$. From the existence of the side limits we have that $\underline{u}$ can be extended continuously to $[c,d]$ and thus 
\begin{equation}\label{left_part}
\inf_{x\in(c,d)}\overline{f}(x)\le \min_{x\in [c,d]}\underline{u}(x).
\end{equation}
Combining \eqref{right_part} and \eqref{left_part} we get \eqref{u_inside_bound}.
\end{proof}

Let us note here that the above pointwise estimates hold only away from the boundary of $\Omega$. As we will see in the following sections, there are examples where $u<\inf f$ and $u>\sup f$ in the boundary. In the following proposition we investigate further the structure of the solution $u$ near the boundary.
\newtheorem{linear_boundary}[first]{Proposition}
\begin{linear_boundary}[Behaviour near the boundary]\label{nearboundary}
The following two statements hold:
\begin{enumerate}
\item If $(a,c)$ is a maximal interval subset of $\{x\in (a,b):\; \overline{u}<\underline{f}\}$, then $\overline{u}$ is an affine function there.
\item Suppose that $u=f$ a.e. in a maximal set of a form $(a,c)$ and suppose that $\overline{u}<\underline{f}$ in a set of the form $(c,d)$. Then $u$ is linear in $(a,d)$.
\end{enumerate}
Analogue results hold for the case $\underline{u}>\overline{f}$ and also near $b$.
\end{linear_boundary}

\begin{proof}
(i) We know from Proposition \ref{u_smaller_fprop} that $u$ will be continuous and concave on $(a,c)$. We claim that $Dw=0$ and thus $w$ is constant there, and thus again from the fact that $u'=w$ we get that $u$ is linear. Indeed we have that the function $v$ is strictly convex on $(a,c)$ with $v(a)=v'(a)=0$. This means that $v$ is strictly increasing on $(a,c)$. If $Dw\lfloor I=-\delta_{x}$ for some $x\in (a,c)$ we would have that $v(x)=-\beta$ and $v(x')<-\beta$ for every $a<x'<x$ which is a contradiction.\\
(ii) Because of the fact that $u=f$ a.e. in $(a,c)$, we have that $v$ is an affine function there and since $v\in H_{0}^{2}(a,b)$ we have that $v=0$ in $(a,c)$. Condition \eqref{C3} forces $w$ to be a constant and condition \ref{C2} forces $Du=w$ and thus $u$ is an affine function in $(a,c)$ say with derivative equal to $\lambda_{1}$. Since $\overline{u}<\underline{f}$ in $(c,d)$ we have that $u$ is also an affine function there say with derivative equal to $\lambda_{2}$. Moreover $u$ must be continuous in $c$ because otherwise from condition \eqref{C2} we would have that $|v'(c)|=\alpha$ and $v'$ would be discontinuous at $c$. Also, we have that $\lambda_{1}=\lambda_{2}$. This is because of the fact that $w\lfloor(a,c)=\lambda_{1}$, $w\lfloor(c,d)=\lambda_{2}$ and if $\lambda_{1}\ne \lambda_{2}$, from condition \eqref{C3} we would have that $|v(c)|=\beta$ and thus $v$ would be discontinuous at $c$. Note finally that $u$ cannot have a gradient change in $(c,d)$ because then $v$ would have local minimum in $(c,d)$ which is again impossible.
\end{proof}

The second part of Proposition \ref{nearboundary} tells us that the case where $u=f$ near the boundary can possibly happen only if $f$ is linear there.

\subsection{$L^{2}$--linear regression}
We continue with some results concerning the $L^{2}$--linear regression of the data $f$.
In \cite{BrediesL1}, it was proved that in the one dimensional $L^{1}-$TGV$_{\beta,\alpha}^{2}$ case there exist thresholds $\alpha^{\ast}>0$, $\beta^{\ast}>0$ such that if $\alpha\ge \alpha^{\ast}$ and $\beta>\beta^{\ast}$ then the solution $u$ of \eqref{P} is the $L^{1}$--linear regression $f^{\ast}$ of $f$, where
\[f^{\ast}=\underset{v \text{ affine}}{\operatorname{argmin}}\,\|f-v\|_{L^{1}(\Omega)}.\] 
There, the values of $\alpha^{\ast}$ and $\beta^{\ast}$ are independent of the function $f$ and depend only on the size of the domain $\Omega$. Here we show that these thresholds exist for the $L^{2}$ case as well, but they depend on the function $f$. 

For a function $f\in\mathrm{BV}(\Omega)$, we define $f^{\star}$ to be the $L^{2}$--linear regression, i.e.,
\begin{equation}\label{fstar}
f^{\star}=\underset{v \text{ affine}}{\operatorname{argmin}}\, \|f-v\|_{L^2(\Omega)}^{2}.
\end{equation}
We have the following proposition:
\newtheorem{L2regression}[first]{Theorem}
\begin{L2regression}[Thresholds for $L^{2}$--linear regression]\label{l2regression_thm}
Let $u$ be the solution to \eqref{P} with data function $f$.
Then $u$ is an affine solution to \eqref{P} if and only if $u=f^{\star}$. Moreover, if
\begin{eqnarray}
\alpha&\ge&\frac{L}{2}\|f-f^{\star}\|_{\infty},\label{regression1}\\
\beta &\ge&\frac{L^{2}}{4}\|f-f^{\star}\|_{\infty},\label{regression2}
\end{eqnarray}
we have that the solution $u$ of \eqref{P} is given by $f^{\star}$.
\end{L2regression}

\begin{proof}
We first show that if $u$ is an affine function and solves \eqref{P}, then $u=f^{\star}$. Since $u$ is affine we have that $D^{2}u=0$, thus the optimum $w$ in \eqref{P} is $w=Du$. Then,  we obviously have
\[u=\underset{\tilde{u} \text{ affine}}{\operatorname{argmin}}\;\frac{1}{2}\|f-\tilde{u}\|_{L^{2}(\Omega)}^{2},\]
 i.e, $u=f^{\star}$. For the second part, since $Df^{\star}-w=0$ and $Dw=0$, in order for the solution to be  equal to $f^{\star}$ it suffices to find a function $v\in H_{0}^{2}(\Omega)$, such that
 \begin{equation}
v''=f-f^{\star},\quad  \|v\|_{\infty}\le \beta \quad \mathrm{and} \quad \|v'\|_{\infty}\le \alpha.
\end{equation}
It is easy to see that for every function $v\in W^{1,\infty}(\Omega)$ with $v(a)=v(b)=0$ (recall that $v$ is necessarily continuous) we have the estimate
\[\|v\|_{\infty}\le \frac{L}{2}\|v'\|_{\infty}.\]
Hence, a function $v\in H_{0}^{2}(\Omega)$ that satisfies $v''=f-f^{\star}$, it  satisfies
\begin{eqnarray}\
\|v'\|_{\infty}&\le& \frac{L}{2}\|f-f^{\star}\|_{\infty},\label{boundvprime}\\
\|v\|_{\infty}&\le&\frac{L^{2}}{4}\|f-f^{\star}\|_{\infty}   \label{boundv}.
\end{eqnarray}
Thus, in order to have this kind of solution, it suffices to choose $\alpha$ and $\beta$ as in \eqref{regression1}-\eqref{regression2}.
\end{proof}
The next proposition tells us that the solution to the $L^{2}$--$\mathrm{TGV}_{\beta,\alpha}^{2}$ problem, has the same $L^{2}$--linear regression with the data function.

\newtheorem{sameregression}[first]{Proposition}
\begin{sameregression}
Let $u_{f,\beta,\alpha}$ be the solution to $L^{2}$--$\mathrm{TGV}_{\beta,\alpha}^{2}$ minimisation problem
 with data $f$. Then $f^{\star}=u_{f,\beta,\alpha}^{\star}$, i.e.,
 \[\underset{v \text{ affine}}{\operatorname{argmin}}\,\|f-v\|_{L^{2}(\Omega)}^{2}=\underset{v \text{ affine}}{\operatorname{argmin}}\,\|u_{f,\beta,\alpha}-v\|_{L^{2}(\Omega)}^{2}. \]
\end{sameregression}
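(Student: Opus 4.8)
The plan is to identify the $L^{2}$--linear regression with an orthogonal projection and then read off the claim directly from the optimality condition \eqref{C1}. Let $\mathcal{A}\subset L^{2}(\Omega)$ denote the two--dimensional subspace of affine functions; it is finite dimensional, hence closed. By \eqref{fstar}, $f^{\star}$ is precisely the orthogonal projection of $f$ onto $\mathcal{A}$, characterised by $\langle f-f^{\star},q\rangle_{L^{2}(\Omega)}=0$ for all $q\in\mathcal{A}$, and likewise $u_{f,\beta,\alpha}^{\star}$ is the orthogonal projection of $u_{f,\beta,\alpha}$ onto $\mathcal{A}$ (note $u_{f,\beta,\alpha}\in\mathrm{BV}(\Omega)\subset L^{2}(\Omega)$, since $\Omega$ is a bounded interval). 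Because orthogonal projection onto $\mathcal{A}$ is linear, $u_{f,\beta,\alpha}^{\star}-f^{\star}$ equals the projection of $u_{f,\beta,\alpha}-f$, so it suffices to show that $u_{f,\beta,\alpha}-f$ is $L^{2}$--orthogonal to every affine function.

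To obtain this, I would invoke the optimality conditions: if $(u_{f,\beta,\alpha},w)$ solves \eqref{P}, there is a $v\in H_{0}^{2}(\Omega)$ with $v''=f-u_{f,\beta,\alpha}$ by \eqref{C1}. Fix an affine $q$. After approximating $v$ by $\mathcal{C}_{c}^{\infty}(\Omega)$ functions in the $H^{2}$--norm to legitimise the manipulation, integrate by parts twice:
\[
\int_{\Omega}\bigl(f-u_{f,\beta,\alpha}\bigr)q\,dx=\int_{\Omega}v''q\,dx=-\int_{\Omega}v'q'\,dx=\int_{\Omega}v\,q''\,dx=0,
\]
where the boundary terms drop out because $v(a)=v(b)=0$ and $v'(a)=v'(b)=0$ (which is exactly what membership in $H_{0}^{2}(\Omega)$ means on an interval), and the last integral vanishes since $q''\equiv 0$. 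Hence $u_{f,\beta,\alpha}-f$ is orthogonal to $\mathcal{A}$, and therefore $u_{f,\beta,\alpha}^{\star}=f^{\star}$.

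I do not expect a genuine obstacle here. The only steps requiring a word of justification are the identification of the minimiser in \eqref{fstar} with the orthogonal projection onto $\mathcal{A}$ (elementary Hilbert space theory, using that $\mathcal{A}$ is finite dimensional hence closed) and the density argument underlying the double integration by parts for $H_{0}^{2}$ functions. Everything else is an immediate consequence of the relation $v''=f-u$ furnished by the optimality conditions; in particular, no information about $w$ and no constraint on the values of $\alpha$ and $\beta$ is needed.
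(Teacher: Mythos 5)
Your argument is correct and is essentially the paper's own proof: the paper likewise uses $v''=f-u$ with $v\in H_{0}^{2}(\Omega)$ to deduce $\int_{\Omega}v''\,dx=0$ and $\int_{\Omega}xv''\,dx=0$ (i.e.\ orthogonality of $f-u$ to $1$ and $x$, obtained by the same integration by parts against the vanishing boundary values of $v$ and $v'$), and then observes that the affine least-squares fit depends only on these two moments, which is your projection-linearity step in computational form. No substantive difference.
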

\begin{proof}
Without loss of generality we can assume that $\Omega=(a,b)$ is an interval of the form $(0,L)$. We have that $v''=f-u$ with $v\in H_{0}^{2}(\Omega)$. Thus, we have that $\int_{\Omega}v''~dx=0$ and $\int_{\Omega}x v''~dx=0$ which implies
\[\int_{\Omega}f~dx=\int_{\Omega}u_{f,\beta,\alpha}~dx\quad \text{and}\quad \int_{\Omega}x f~dx=\int_{\Omega}x u_{f,\beta,\alpha}~dx.\]
Then the proof is finished by simply observing that for a function $g$ we have
\begin{eqnarray*}
\underset{v \text{ affine}}{\operatorname{argmin}}\,\|g-v\|_{L^{2}(\Omega)}^{2}&=&\underset{v=\lambda x+\mu}{\operatorname{argmin}}\,\|g-\lambda x-\mu\|_{L^{2}(\Omega)}^{2}\\
&=&\underset{v=\lambda x+\mu}{\operatorname{argmin}}\,\frac{1}{2}\lambda^{2}L^{2}+\lambda L^{2}b+L\mu^{2}-2\lambda\int_{\Omega}xg~dx-2\mu\int_{\Omega}g~dx.
\end{eqnarray*}

\end{proof}

\subsection{Even and odd functions}
Before we proceed to the computation of exact solutions for some simple examples, we firstly point out some facts concerning odd and even data. For this section, we assume that $\Omega$ is an interval of the type $(-L,L)$.
\newtheorem{symmetrypredual}[first]{Proposition}
\begin{symmetrypredual}\label{symmetrypredual_prop}
Let $f\in \mathrm{BV}(-L,L)$ be an odd (even) function. Then the solution $v\in H_{0}^{2}(-L,L)$ to the problem \eqref{Pprime} is also odd (even).
\end{symmetrypredual}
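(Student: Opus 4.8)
The plan is to exploit the uniqueness of the solution to the predual problem \eqref{Pprime} (established in Proposition \ref{first}) together with the symmetry of the functional being maximised. The key observation is that the constraint set $K=\{v\in H_0^2(-L,L):\|v\|_\infty\le\beta,\;\|v'\|_\infty\le\alpha\}$ is invariant under the reflection operation, and the objective functional transforms in a controlled way under this reflection when $f$ is odd or even.

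Concretely, suppose $v\in H_0^2(-L,L)$ is the unique solution to \eqref{Pprime}. For the even case, define $\tilde v(x)=v(-x)$; for the odd case, define $\tilde v(x)=-v(-x)$. First I would check that in both cases $\tilde v\in H_0^2(-L,L)$ and $\tilde v\in K$: indeed $\tilde v(\pm L)=0$ and $\tilde v'(\pm L)=0$ by the chain rule, and $\|\tilde v\|_\infty=\|v\|_\infty$, $\|\tilde v'\|_\infty=\|v'\|_\infty$ since reflection and sign change are isometries of $L^\infty$. Next I would compute the objective value at $\tilde v$. We have $\tilde v''(x)=v''(-x)$ in the even case and $\tilde v''(x)=-v''(-x)$ in the odd case. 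Substituting $y=-x$ in the integrals $\int_\Omega f\tilde v''\,dx$ and $\int_\Omega(\tilde v'')^2\,dx$, and using that $f$ is even (resp. odd), one finds that in both cases
\[
\int_\Omega f\tilde v''\,dx-\frac12\int_\Omega(\tilde v'')^2\,dx=\int_\Omega fv''\,dx-\frac12\int_\Omega(v'')^2\,dx.
\]
For instance in the odd case: $\int_\Omega f(x)\tilde v''(x)\,dx=\int_\Omega f(x)(-v''(-x))\,dx=\int_\Omega f(-y)(-v''(y))\,dy=\int_\Omega(-f(y))(-v''(y))\,dy=\int_\Omega f(y)v''(y)\,dy$, and the quadratic term is manifestly invariant. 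The even case is analogous without the sign juggling.

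Therefore $\tilde v$ is also a maximiser of \eqref{Pprime}. By the uniqueness statement in Proposition \ref{first}, we conclude $\tilde v=v$, which is precisely the statement that $v$ is even (resp. odd). I do not anticipate a genuine obstacle here; the only points requiring a little care are verifying membership of $\tilde v$ in $H_0^2$ and in the constraint set $K$ (routine, via the chain rule and the isometry properties of reflection), and keeping the sign bookkeeping straight in the odd case when changing variables in the linear term. The strict concavity of the objective, already used to prove uniqueness in Proposition \ref{first}, is what makes the symmetrisation argument conclusive rather than merely producing a second candidate solution.
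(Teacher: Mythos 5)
Your proof is correct and uses the same essential mechanism as the paper's: the constraint set and the objective are invariant under the reflection (with a sign flip in the odd case), so the symmetrised function is also a maximiser, and the uniqueness from the strict concavity of the objective forces it to coincide with $v$. The paper phrases this slightly more indirectly via an auxiliary datum $\hat f(x)=-f(-x)$ and its solution $\hat v$, but your direct symmetrisation of $v$ itself is the same argument, streamlined.
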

\begin{proof}
We prove the odd case, the even case is proved analogously. Suppose that $f(x)=-f(-x)$ a.e.. We will show that $v(x)=-v(-x)$.
Let $\hat{f}(x)=-f(-x)$ and $\hat{v}$ to be the solution to \ref{Pprime} with data function $\hat{f}$ and set $\tilde{v}(x)=-v(-x)$. 
Obviously 
\[\|v\|_{\infty}\le \beta,\; \|v'\|_{\infty}\le \alpha \iff \|\tilde{v}\|_{\infty}\le \beta,\; \|\tilde{v}'\|_{\infty}\le \alpha.\]
We have
\begin{eqnarray*}
\int_{-L}^{L}\hat{f}(x)\tilde{v}(x)~dx-\frac{1}{2}\int_{-L}^{L}(\tilde{v}''(x))^{2}~dx&\le& \int_{-L}^{L}\hat{f}(x)\hat{v}(x)~dx-\frac{1}{2}\int_{-L}^{L}(\hat{v}''(x))^{2}~dx \Rightarrow\\
\int_{-L}^{L}f(x)v(x)~dx-\frac{1}{2}\int_{-L}^{L}v(x)^{2}~dx&\le&\int_{-L}^{L}f(x)(-\hat{v}(-x))~dx-\frac{1}{2}\int_{-L}^{L}((-\hat{v}(-x))'')^{2}~dx
\end{eqnarray*}
thus $v(x)=-\hat{v}(-x)$ but since $\hat{f}=f$ we have that $\hat{v}=v$.
\end{proof}

\newtheorem{symmetrydual}[first]{Proposition}
\begin{symmetrydual}\label{symmetrydual_label}
Let $f\in \mathrm{BV}(-L,L)$ be an odd (even) function. Then the problem \eqref{P} has a solution $u$ that is also odd (even).
\end{symmetrydual}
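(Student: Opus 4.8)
The plan is to obtain the symmetric solution directly from the predual solution, rather than symmetrising the primal problem. By Proposition~\ref{symmetrypredual_prop}, the unique maximiser $v\in H_0^2(-L,L)$ of \eqref{Pprime} is odd when $f$ is odd and even when $f$ is even. Since no duality gap occurs, this $v$ is linked to the (unique) solution $u$ of \eqref{P} through the optimality condition \eqref{C1}, i.e. $u=f-v''$. So the first step is to invoke Proposition~\ref{symmetrypredual_prop}, the second step is to express $u$ as $f-v''$ via \eqref{C1}, and the remaining step is to read off the parity of $f-v''$.

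The key elementary fact is that differentiation interchanges odd and even functions, so that $v''$ has the same parity as $v$ (in the $L^2$, hence almost everywhere, sense). If $f$ and $v$ are both odd, then $u(x)=f(x)-v''(x)=-f(-x)+v''(-x)=-u(-x)$ a.e., so $u$ is odd; if $f$ and $v$ are both even, then $u(x)=f(x)-v''(x)=f(-x)-v''(-x)=u(-x)$ a.e., so $u$ is even. As $u$ is the unique solution of \eqref{P}, the solution itself is odd (respectively even). If in addition one wants a fully symmetric optimal pair $(u,w)$, one takes $w=-v'$, which then has the parity opposite to that of $v$; that this $(u,w)$ is optimal follows because the same $v$ witnesses \eqref{C1}--\eqref{C3} for it.

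I do not expect a genuine obstacle here: once Proposition~\ref{symmetrypredual_prop} is available the argument is essentially bookkeeping. The only points deserving care are (a) invoking uniqueness of both the predual maximiser and of $u$, so that the conclusion concerns \emph{the} solution and not merely \emph{a} solution, and (b) observing that the parity identities hold modulo null sets, which is harmless since we work with $L^2$ functions and with good representatives of $\mathrm{BV}$ functions. As an alternative route, one could argue directly on \eqref{P}: for odd $f$ the map $(u,w)\mapsto(-u(-\cdot),\,w(-\cdot))$ preserves $\mathrm{BV}(\Omega)^2$, leaves $\tfrac12\int_\Omega(u-f)^2\,dx$ invariant (using oddness of $f$), and leaves $\alpha\|Du-w\|_{\mathcal M}+\beta\|Dw\|_{\mathcal M}$ invariant because the reflection $x\mapsto-x$ preserves the total variation of a measure; hence it sends minimisers to minimisers and uniqueness of $u$ forces $u(x)=-u(-x)$. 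The even case uses $(u,w)\mapsto(u(-\cdot),\,-w(-\cdot))$ instead.
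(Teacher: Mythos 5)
Your main argument is correct but proceeds differently from the paper. The paper proves this proposition by symmetrising the primal problem directly: it sets $\hat f(x)=-f(-x)$, lets $\hat u$ be the solution for data $\hat f$ and $\tilde u(x)=-u(-x)$, uses the reflection invariance of $\mathrm{TGV}^2_{\beta,\alpha}$ and of the fidelity term to show that $\tilde u$ attains the minimum for data $\hat f$, and then concludes $u(x)=-u(-x)$ from $\hat f=f$ and uniqueness of $u$ --- this is precisely the ``alternative route'' you sketch in your last sentences (and your bookkeeping of the companion variable, $w(-\cdot)$ in the odd case and $-w(-\cdot)$ in the even case, is the right one). Your primary route instead pulls the symmetry through the duality: Proposition \ref{symmetrypredual_prop} gives the parity of the unique predual maximiser $v$, the extremality relation \eqref{C1} gives $u=f-v''$, and second differentiation preserves parity, so $u$ inherits the parity of $f$. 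This is legitimate because the paper's derivation of the optimality conditions explicitly links the solution of \eqref{Pprime} to the solution of \eqref{P} with no duality gap, and because $u$ is unique. What your route buys is economy and a stronger package: you get the parities of $v$, of $w=-v'$ (opposite parity to $u$), and of $u$ simultaneously, which is exactly the combination the paper later exploits when computing exact solutions. What the paper's route buys is independence from the duality machinery --- it needs only reflection invariance of the functional and uniqueness of $u$, and in particular does not presuppose Proposition \ref{symmetrypredual_prop}. Both are sound; no gap.
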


\begin{proof}
Again we only prove the odd case as the even case can be proved in a similar fashion. Set $\hat{f}(x)=-f(-x)$, $\hat{u}$ the solution of \eqref{P} with data function $\hat{f}$ and set $\tilde{u}(x)=-u(-x)$. One can notice easily that $\mathrm{TGV}_{\beta,\alpha}^{2}(u)=\mathrm{TGV}_{\beta.\alpha}^{2}(\tilde{u})$. We have
 \begin{eqnarray*}
 \frac{1}{2}\int_{-L}^{L}(\hat{u}(x)-\hat{f}(x))^{2}dx+\text{TGV}_{\beta,\alpha}^{2}(\hat{u})&\le&\frac{1}{2}\int_{-L}^{L}(\tilde{u}(x)-\hat{f}(x))^{2}dx+\text{TGV}_{\beta,\alpha}^{2}(\tilde{u})\Rightarrow\\
 \frac{1}{2}\int_{-L}^{L}(-\hat{u}(-x)-f(x))^{2}dx+\text{TGV}_{\beta,\alpha}^{2}(-\hat{u}(-\cdot))&\le&\frac{1}{2}\int_{-L}^{L}(u(x)-f(x))^{2}dx+\text{TGV}_{\beta,\alpha}^{2}(u)
 \end{eqnarray*}
Thus we have $u(x)=-\hat{u}(-x)$. Since $\hat{f}=f$ we have $\hat{u}=u$ and thus $u(x)=-u(-x)$.
\end{proof}

Finally, the following theorem tells us that at least for the case of even data functions $f$, if the ratio $\beta/\alpha$ of the TGV$_{\beta,\alpha}^{2}$ parameters is large enough then TGV regularisation is equivalent to TV regularisation with parameter $\alpha$. We need firstly the following Lemma:

\newtheorem{even_plus_linear}[first]{Lemma}
\begin{even_plus_linear}\label{Lemma_even_plus_linear}
Let $u\in \mathrm{BV}(-L,L)$ be an even function. Then for every $c\in\mathbb{R}$ we have
\begin{equation}
\|Du\|_{\mathcal{M}}\le \|Du+c\|_{\mathcal{M}}.
\end{equation}
\end{even_plus_linear}

\begin{proof}
We can assume that $c>0$.
It suffices to show that 
\begin{equation}\label{abs_part_even}
\int\limits_{(-L,L)}|u'|~dx\le \int\limits_{(-L,L)}|u'+c|~dx.
\end{equation}
Since $u$ is even we have that $u'$ is even and thus up to null sets we have
\begin{eqnarray*}
\{u'>0\}\cap(-L,0)&=&-\{u'<0\}\cap (0,L),\\
\{u'<0\}\cap(-L,0)&=&-\{u'>0\}\cap (0,L).
\end{eqnarray*}
Then we estimate
\begin{eqnarray*}
\hspace{-0.4 cm}\int\limits_{(-L,L)}|u'|~dx&=&\int\limits_{\{u'>0\}\cap (-L,0)}\hspace{-0.4cm}u' ~dx \;\;-\hspace{-0.3cm}\int\limits_{\{u'<0\}\cap(0,L)}\hspace{-0.4cm} u'~dx\;\;+\hspace{-0.3cm}\int\limits_{\{u'>0\}\cap (0,L)}\hspace{-0.4cm}u' ~dx\;\; -\hspace{-0.3cm}\int\limits_{\{u'<0\}\cap(-L,0)}\hspace{-0.4cm} u'~dx\\
&=&\int\limits_{\{u'>0\}\cap (-L,0)}\hspace{-0.4cm}c+u' ~dx \;\;-\hspace{-0.3cm}\int\limits_{\{u'<0\}\cap(0,L)}\hspace{-0.4cm} c+u'~dx\;\;+\hspace{-0.3cm}\int\limits_{\{u'>0\}\cap (0,L)}\hspace{-0.4cm}c+u' ~dx \;\;-\hspace{-0.3cm}\int\limits_{\{u'<0\}\cap(-L,0)} \hspace{-0.4cm}c+u'~dx\\
&\le&\int\limits_{\{u'>0\}\cap (-L,0)}\hspace{-0.4cm}c+u' ~dx \;\;+\hspace{-0.3cm}\int\limits_{\{u'<0\}\cap(0,L)}\hspace{-0.4cm} |c+u'|~dx\;\;+\hspace{-0.3cm}\int\limits_{\{u'>0\}\cap (0,L)}\hspace{-0.4cm}c+u' ~dx \;\;+\hspace{-0.3cm}\int\limits_{\{u'<0\}\cap(-L,0)} \hspace{-0.4cm}|c+u'|~dx+\hspace{-0.3cm}\int\limits_{\{u'=0\}\cap (-L,L)}\hspace{-0.4cm}|c|~dx \\
&=&\int\limits_{(-L,L)}|u'+c|~dx.
\end{eqnarray*}
\end{proof}

\begin{figure}[h!]
\begin{center}
\hspace{-0.5 cm}
\includegraphics[scale=0.40]{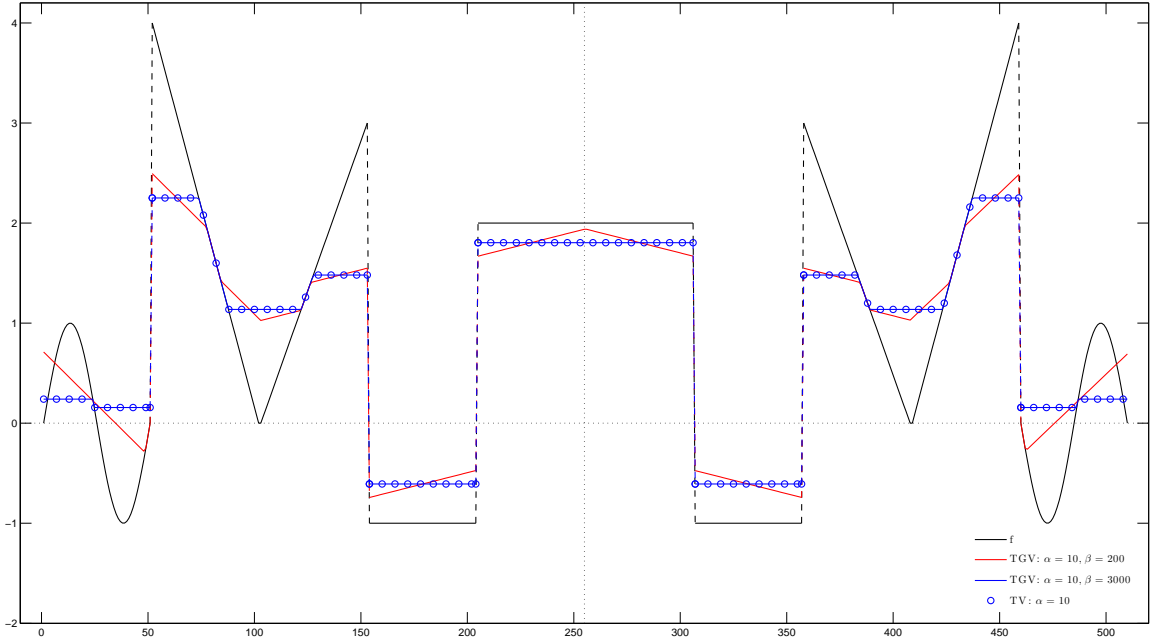}
\caption{ Numerical verification of Theorem \ref{evenTV_label}. Using the even function $f$ shown above as a data function and setting $\beta/\alpha\ge L$, the  solutions of the $\mathrm{TGV}_{\beta,\alpha}^{2}$ and $\alpha\,\mathrm{TV}$ regularisation problems coincide.}
\label{evenplot}
\end{center}
\end{figure}

\newtheorem{evenTV}[first]{Theorem}
\begin{evenTV}\label{evenTV_label}
Let $f\in \mathrm{BV}(-L,L)$ be an even function. If 
\begin{equation}\label{a_b_even}
\frac{\beta}{\alpha}\ge L,
\end{equation}
then the solution to the $L^{2}$--$\mathrm{TGV}_{\beta,\alpha}^{2}$ regularisation problem is the same with the solution to the following $\mathrm{TV}$ regularisation problem:
\[\min_{u}\;\frac{1}{2}\int_{\Omega}(u-f)^{2}dx+\alpha\,\mathrm{TV}(u).\]
\end{evenTV}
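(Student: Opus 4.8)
The plan is to reduce the statement to the pointwise identity $\mathrm{TGV}_{\beta,\alpha}^{2}(u)=\alpha\,\mathrm{TV}(u)$ for \emph{every even} $u\in\mathrm{BV}(\Omega)$, $\Omega=(-L,L)$, valid whenever $\beta\ge\alpha L$, and then to invoke the symmetry of the minimisers. Recall from \eqref{tgv1d_min} that $\mathrm{TGV}_{\beta,\alpha}^{2}\le\alpha\,\mathrm{TV}$ always (take $w=0$); hence, writing $E_{\mathrm{TGV}}(v)=\tfrac12\|v-f\|_{L^{2}(\Omega)}^{2}+\mathrm{TGV}_{\beta,\alpha}^{2}(v)$ and $E_{\mathrm{TV}}(v)=\tfrac12\|v-f\|_{L^{2}(\Omega)}^{2}+\alpha\,\mathrm{TV}(v)$, one has $E_{\mathrm{TGV}}\le E_{\mathrm{TV}}$ pointwise, so $\min E_{\mathrm{TGV}}\le\min E_{\mathrm{TV}}$. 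Let $u$ denote the unique solution of \eqref{P}, equivalently of \eqref{basic_problem_gen}; by Proposition \ref{symmetrydual_label} together with uniqueness, $u$ is even. Once the identity is available, $\min E_{\mathrm{TGV}}=E_{\mathrm{TGV}}(u)=E_{\mathrm{TV}}(u)\ge\min E_{\mathrm{TV}}$, so equality holds throughout, $u$ minimises $E_{\mathrm{TV}}$, and since the $L^{2}$--$\mathrm{TV}$ functional is strictly convex in $u$ and therefore has a unique minimiser, $u$ is exactly that $\mathrm{TV}$ solution, which is the assertion.

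So the core step I would carry out is the identity $\mathrm{TGV}_{\beta,\alpha}^{2}(u)=\alpha\,\mathrm{TV}(u)$ for even $u$ under the hypothesis $\beta\ge\alpha L$. The bound ``$\le$'' is immediate, so I would focus on ``$\ge$''. Fix an arbitrary competitor $w\in\mathrm{BV}(\Omega)$ in \eqref{tgv1d_min} and let $\bar w=\frac1{2L}\int_{-L}^{L}w\,dx$ be its mean. The triangle inequality for the Radon norm gives
\[\|Du-w\|_{\mathcal M}\ge\|Du-\bar w\|_{\mathcal M}-\|(w-\bar w)\mathcal{L}\|_{\mathcal M}=\|Du-\bar w\|_{\mathcal M}-\|w-\bar w\|_{L^{1}(-L,L)}.\]
Next I would use the sharp one--dimensional $L^{1}$ Poincaré--Wirtinger inequality on the interval $(-L,L)$ of length $2L$, namely $\|w-\bar w\|_{L^{1}(-L,L)}\le L\,\|Dw\|_{\mathcal M}$ (the constant is half the length; it follows from the double--integral estimate $\int\!\!\int_{(-L,L)^{2}}|w(x)-w(y)|\,dx\,dy\le 2L^{2}\,\|Dw\|_{\mathcal M}$). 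Finally, since $u$ is even, Lemma \ref{Lemma_even_plus_linear} applied with $c=-\bar w$ yields $\|Du-\bar w\|_{\mathcal M}\ge\|Du\|_{\mathcal M}$. Combining the three estimates,
\[\alpha\|Du-w\|_{\mathcal M}+\beta\|Dw\|_{\mathcal M}\ge\alpha\|Du\|_{\mathcal M}+(\beta-\alpha L)\|Dw\|_{\mathcal M}\ge\alpha\|Du\|_{\mathcal M},\]
using $\beta\ge\alpha L$ in the last step. Taking the infimum over $w$ in \eqref{tgv1d_min} gives $\mathrm{TGV}_{\beta,\alpha}^{2}(u)\ge\alpha\|Du\|_{\mathcal M}=\alpha\,\mathrm{TV}(u)$, which finishes the identity.

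The only ingredient not already present in the paper is the sharp $L^{1}$ Poincaré--Wirtinger inequality, which I would dispatch in a line via the double--integral argument indicated above (or simply cite). Its constant $L$ is precisely what makes $\beta\ge\alpha L$ the natural threshold: for $u(x)=|x|$ and $w(x)=\mathrm{sgn}(x)$ on $(-L,L)$ one has $Du-w\mathcal{L}=0$, $\|Dw\|_{\mathcal M}=2$ and $\|Du\|_{\mathcal M}=2L$, so $\mathrm{TGV}_{\beta,\alpha}^{2}(|x|)\le2\beta$, which is strictly below $\alpha\,\mathrm{TV}(|x|)=2\alpha L$ as soon as $\beta<\alpha L$. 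I expect the only genuine care needed is in the first display -- the triangle inequality must be applied to the total variations of the measures $Du-w\mathcal{L}$ and $Du-\bar w\mathcal{L}$, not to pointwise values -- and, as usual, one should pass to good representatives when using the pointwise bound $|w(x)-w(y)|\le|Dw|$ over the interval between $x$ and $y$ inside the Poincaré estimate.
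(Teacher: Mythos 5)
Your proposal is correct and follows essentially the same route as the paper: reduce to the identity $\mathrm{TGV}_{\beta,\alpha}^{2}(u)=\alpha\,\mathrm{TV}(u)$ for even $u$, and prove the nontrivial inequality by combining the triangle inequality for the Radon norm, the $L^{1}$ Poincar\'e--Wirtinger inequality with constant $L$, and Lemma \ref{Lemma_even_plus_linear} applied to the mean of $w$ --- exactly the three estimates in the paper's chain, merely rearranged (you subtract and use $\beta-\alpha L\ge 0$ where the paper adds and uses $\max\{1,\alpha L/\beta\}=1$). Your concluding minimiser comparison and the sharpness example with $u(x)=|x|$ are slightly more explicit than the paper's, but the substance is identical.
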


\begin{proof}
Recall that for every $u\in W^{1,1}(\Omega)$ the following Poincar\'{e} type inequality holds,
\[\|u-u_{\Omega}\|_{L^{1}(\Omega)}\le L \|\nabla u\|_{L^{1}(\Omega)},\]
where $u_{\Omega}=\frac{1}{\mathcal{L}(\Omega)}\int_{\Omega}u ~dx$, see for example \cite{AmbrosioBV}. Using density arguments,  we can prove that the same inequality holds for BV with the same constant, i.e., for every $u\in \mathrm{BV}(\Omega)$
\begin{equation}\label{BVpoincare}
\|u-u_{\Omega}\|_{L^{1}(\Omega)}\le L \|Du\|_{\mathcal{M}}.
\end{equation}
Since $f$ is even, from Proposition \ref{symmetrydual_label} we have that the solution of 
\begin{equation}\label{TGV_before_even}
\min_{u\in\mathrm{BV}(\Omega)}\; \frac{1}{2}\int_{\Omega}(u-f)^{2}dx+\mathrm{TGV}_{\beta,\alpha}^{2}(u),
\end{equation}
is an even function as well. Thus, the problem \eqref{TGV_before_even} is equivalent to
\begin{equation}\label{TGV_after_even}
\min_{\substack{u\in\mathrm{BV}(\Omega)\\ u \text{ even}}}\; \frac{1}{2}\int_{\Omega}(u-f)^{2}dx+\mathrm{TGV}_{\beta,\alpha}^{2}(u).
\end{equation}
Similarly we can prove that the outcome of TV regularisation for even data, is even as well.
Thus, it suffices to prove that for an even function $u$, we have $\mathrm{TGV}_{\beta,\alpha}^{2}(u)=\alpha\,\mathrm{TV}(u):=\alpha \|Du\|_{\mathcal{M}}$, provided \eqref{a_b_even} holds.
We calculate successively:
\begin{align*}
\mathrm{TGV}_{\beta,\alpha}^{2}(u)&=\min_{w\in\mathrm{BV}(\Omega)}\; \alpha \|Du-w\|_{M}+\beta\|Dw\|_{\mathcal{M}}&\\
								 &\le\alpha \|Du\|_{\mathcal{M}}  		&\text{(choosing }w=0),\\
								 &\le \alpha \|Du-w_{\Omega}\|_{\mathcal{M}}&\forall w\in\mathrm{BV}(\Omega), (\text{from Lemma \ref{Lemma_even_plus_linear}}),\\
								 &\le\alpha \|Du-w\|_{\mathcal{M}} +\alpha\|w-w_{\Omega}\|_{L^{1}(\Omega)}&\\
								 &\le\alpha \|Du-w\|_{\mathcal{M}} +\frac{\beta \alpha L}{\beta}\|Dw\|_{\mathcal{M}} &\text{(Poincar\'{e} inequality),}\\
								 &\le\max\left \{1,\frac{aL}{\beta} \right \} \left (\alpha \|Du-w\|_{\mathcal{M}}+\beta \|Dw\|_{\mathcal{M}} \right)&\\
								 &=\alpha \|Du-w\|_{\mathcal{M}}+\beta \|Dw\|_{\mathcal{M}} &(\text{since \eqref{a_b_even} holds}).
\end{align*}
Finally, we have
\begin{align*}
\alpha \|Du\|_{\mathcal{M}}&\le \alpha \|Du-w\|_{\mathcal{M}}+\beta \|Dw\|_{\mathcal{M}}\quad \forall w\in\mathrm{BV}(\Omega) \Rightarrow\\
\alpha \|Du\|_{\mathcal{M}}&\le\min_{w\in\mathrm{BV}(\Omega) }\alpha \|Du-w\|_{\mathcal{M}}+\beta \|Dw\|_{\mathcal{M}}\\						&=\mathrm{TGV}_{\beta,\alpha}^{2}(u), 
\end{align*}
and the proof is complete.
\end{proof}

In Figure \ref{evenplot} we verify numerically the above result. Both TGV and TV minimisation problems are solved with the Chambolle-Pock primal dual method \cite{chambolle2011first} as it is described in \cite{tgvcolour}.

\section{Computation of exact solutions}\label{section:computation}
In this section we compute exact solutions for the $\mathrm{TGV}_{\beta,\alpha}^{2}$ regularisation problem for several data functions $f$. In particular we calculate exact solutions for simple piecewise constant, piecewise affine and hat functions. We are focusing on the relation between the structure of solutions and the parameters $\alpha$ and $\beta$.

\subsection{Piecewise constant function with a single jump}
For convenience in the calculation, for this section, $\Omega$ will be an interval of the form $(0,2L)$. We define $f$ to be a piecewise function with a single jump, i.e.,
\begin{equation}\label{f_1}
f(x)=
\begin{cases}
0 & \text{if }\;x\in (0,L),\\
h& \text{if }\;x\in [L,2L),
\end{cases}
\end{equation}
for some $h>0$, see Figure \ref{f_jump}.

\begin{figure}[h!]
\begin{center}
\includegraphics[scale=0.40]{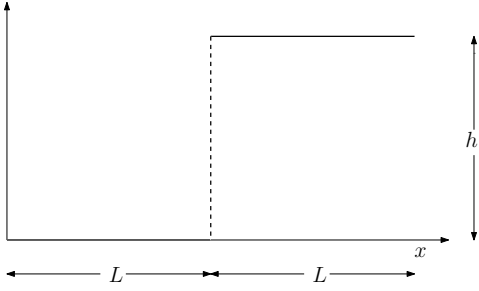}
\caption{Piecewise constant function $f$ with a jump discontinuity at $x=L$. }
\label{f_jump}
\end{center}
\end{figure}

In order to compute exact solutions for $f$ our strategy is as  follows: Firstly, we investigate what are the possible types of solutions, describing them in a qualitative way and secondly we calculate them explicitly.

Since after a simple translation $f$ is an even function and $\mathrm{TGV}_{\beta,\alpha}^{2}$ is translation invariant \cite{TGV}, from Proposition \ref{symmetrydual_label} we have that the solution $u$ will be symmetric with respect to the point $(L,h/2)$. That is to say
\begin{equation}
u(x)=-u(2L-x)+h,\quad \text{for all } x\in (L,2L).
\end{equation}
Thus, we only need to describe the solution in $(0,L)$. The following theorem states which kinds of solutions are allowed to occur.
\newtheorem{f_allowed_qualitative}[first]{Theorem}
\begin{f_allowed_qualitative}\label{f_allowed_qualitative_thm}
Let  $u$ be the solution to the $L^{2}$--$\mathrm{TGV}_{\beta,\alpha}^{2}$ regularisation problem
for the piecewise constant function defined in \eqref{f_1}. Then $u$ can only have the following form on $(0,L)$:
\begin{itemize}
\item There exists $0<x_{1}<L$ such that $u$ is strictly negative and affine in $(0,x_{1})$ with $u(x_{1})=0$. 
\item There exists potentially $0<x_{1}\le\tilde{x}_{1}<L$ such that $u=0$ in $[x_{1},\tilde{x}_{1}]$.
\item $u>0$  in $(\tilde{x}_{1},L)$ consisting of at most two affine parts of increasing gradient in $(\tilde{x}_{1},x_{2})$ and $(x_{2},L)$ respectively with $\tilde{x}_{1}<x_{2}<L$. Moreover, in the case $\tilde{x}_{1}=x_{1}$, $u$ has the same gradient in $(0,x_{1})$ and $(x_{1},x_{2})$. 
\end{itemize}
\end{f_allowed_qualitative}

\newtheorem*{note1}{Note}
\begin{note1}
\emph{In fact, as we will see later, $\tilde{x}_{1}$ must be equal to $x_{1}$, i.e., the allowed solutions will be strictly negative in $(0,x_{1})$ and strictly positive in $(x_{1},L)$. However this is not obvious by simply interpreting the optimality conditions \ref{C1}, \ref{C2} and \ref{C3} (which is how Theorem \ref{f_allowed_qualitative_thm} is proved) but it is a result of subsequent calculations.}
\end{note1}

\begin{proof}[Proof of Theorem \ref{f_allowed_qualitative_thm}]
Let us note firstly that from Propositions \ref{jumpsetprop} and \ref{awayboundary} we have that $u$ is continuous on $(0,L)$ with $u^{-}(L)\ge 0$. Also, from Proposition \ref{nearboundary} we have that if $u<0$  on a set of the form $(0,x_{1})$ then $u$ must be affine there. Moreover Proposition \ref{u_smaller_fprop} implies that there is not an interval $(c,d)\subseteq (0,L)$ such that $u<0$  ($u>0$) on $(c,d)$ with $u^{+}(c)=u^{-}(d)=0$. Indeed, for the negative case, there would be a point $x\in (c,d)$ such that $u$ is affine on $(c,x)$ and $(x,d)$ with strictly negative and strictly positive gradient respectively on these intervals. But then $u'$ would be strictly increasing in an interval where $\overline{u}<\underline{f}$ a contradiction. We work similarly for the positive case. We conclude that $u$ can be strictly negative only in an interval of the type $(0,c)$, $0\le c\le L$ and strictly positive only in an interval of the type $(d,L)$, $0\le d\le L$ possibly consisting of two affine parts of increasing gradient. In particular $u$ would be increasing in $(0,L)$. The proof will be complete if we prove that the following situations cannot happen:
\begin{enumerate}
\item $u>0$ in $(0,L)$.
\item $u=0$ in $(0,x_{1})$, $0<x_{1}<L$ and $u>0$ in $(x_{1},L)$.
\item $u=0$ in $(0,L)$.
\item $u<0$ in $(0,x_{1})$, $0<x_{1}<L$ and $u=0$ in $x_{1},L$.
\item $u<0$ in $(0,L)$. 
\end{enumerate}

For (i) suppose that $u>0$ in $(0,L)$. By symmetry we would have that $u<h$ in $(L,2L)$. This means that the dual function $v$ would be strictly convex in $(0,L)$, strictly concave in $(L,2L)$ with $v^{+}(0)=v'^{+}(0)=v^{-}(2L)=v'^{-}(2L)=0$ something that cannot happen. 

For (ii) suppose that there exists $0<x_{1}<L$ such that $u=0$ in $(0,x_{1})$ and $u>0$ in $(x_{1},L)$. Then $v$ will be an affine function in $(0,x_{1})$ (condition \eqref{C1}) but since $v\in H_{0}^{2}(0,2L)$ we have that $v=0$ in $(0,x_{1})$. Moreover $Du=0$  and thus $w=0$ a.e. in $(0,x_{1})$ because otherwise from condition \eqref{C2} we would have that $|v'|=\alpha$ on a set of positive measure in $(0,x_{1})$. Now, since $u>0$ in $(x_{1},L)$ (assume w.l.o.g. that $u$ is affine there) from Proposition \ref{u_smaller_fprop} we have that $w=Du=c>0$ something that forces $w$ to have a jump discontinuity at $x_{1}$ and thus from \eqref{C3} we get that $v(x_{1})=\beta>0$. However, this contradicts to the continuity of $v$.

For (iii) suppose that $u=0$ in $(0,L)$. Then by symmetry we would have that $u=h$ in $(L,2L)$. Thus, from condition \eqref{C1} again, $v$ will be affine  in those intervals and again from its boundary conditions and its continuity we get that $v=0$ on $(0,L)$. But since $u$ has a jump discontinuity at $x=L$, according to condition \eqref{C2} we must have $v'=-\alpha$, a contradiction.

For (iv) suppose that there exists $0<x_{1}<L$ such that $u<0$ in $(0,x_{1})$ and $u=0$ in $(x_{1},L)$. Then $v$ would be strictly convex and thus have a strictly increasing derivative in $(0,x_{1})$. Since $v'^{+}(0)=0$ we have that $v'(x_{1})>0$. Moreover,  $v$ would be affine in $(x_{1},L)$ with positive gradient equal to $v'(x_{1})$, something that forces $v(L)>0$. However, as after a simple translation $f$ is an odd function, and according to Proposition  \ref{symmetrypredual_prop} the same will be true for the continuous function $v$, then we must have  $v(L)=0$. The case (v) is proved in a similar way.

Finally, for the last statement of the proposition, suppose that $u<0$ in $(0,x_{1})$, $u>0$ in $(x_{1},L)$ and the derivative of $u$ has a jump discontinuity at $x_{1}$. Since $Du=w$, in $(0,x_{1})$ and $(x_{1},L)$ we have that $w$ will have a jump discontinuity at $x_{1}$ and condition \eqref{C3} imposes that $|v(x_{1})|=\beta$. The case $v(x_{1})=-\beta$ is impossible as $v$ is strictly increasing in $(0,x_{1})$ and also if $v(x_{1})=\beta$, from the fact that $v'(x_{1})>0$ and $v'$ is continuous as $x_{1}$ we have that $v$ is increasing in $(x_{1},x_{1}+\delta)$ for some $\delta>0$, thus $v(x_{1}+\delta)>\beta$, contradicting \eqref{C3}.
\end{proof}

Notice how we take  advantage of the symmetries of $u$ and $v$  in order to prove Theorem \ref{f_allowed_qualitative_thm}. It remains to rule out the possibility that $x_{1}<\tilde{x}_{1}$. In particular, we have to show that the four following situations cannot occur \footnote{N: negative, 0: zero, P1: positive one affine part, P2: positive two affine parts, J: jump discontinuity, C: continuous.}:
\begin{align}\label{N0P1J}
\tag{N-0-P1-J}
&u<0 \text{ in } (0,x_{1}),\; u=0 \text{ in } (x_{1},\tilde{x}_{1}),\; u>0 \text{ with 1 affine part in } (\tilde{x}_{1},L), \text{ jump at } x=L,\\
\label{N0P2J}\tag{N-0-P2-J}
&u<0 \text{ in } (0,x_{1}),\; u=0 \text{ in } (x_{1},\tilde{x}_{1}),\; u>0 \text{ with 2 affine parts in } \text{ in } (\tilde{x}_{1},L), \text{ jump at } x=L,\\
\label{N0P1C}
\tag{N-0-P1-C}
&u<0 \text{ in } (0,x_{1}),\; u=0 \text{ in } (x_{1},\tilde{x}_{1}),\; u>0 \text{ with 1 affine part in } (\tilde{x}_{1},L), \text{ continuous at } x=L,\\
\label{N0P2C}
\tag{N-0-P2-C}
&u<0 \text{ in } (0,x_{1}),\; u=0 \text{ in } (x_{1},\tilde{x}_{1}),\; u>0 \text{ with 2 affine parts in } (\tilde{x}_{1},L), \text{ continuous at } x=L,
\end{align}
where $0<x_{1}<\tilde{x}_{1}<L$.
In the following we will only show that for \eqref{N0P1J}, the proof of the rest three cases is done similarly.

\newtheorem{elimination1}[first]{Proposition}
\begin{elimination1}
The case \eqref{N0P1J} cannot occur.
\end{elimination1}

\begin{proof}
Suppose that $u<0$ in $(0,x_{1})$, $u=0$ in  $(x_{1},\tilde{x}_{1})$ and $u>0$ and affine in $(\tilde{x}_{1},L)$ with $0<x_{1}<\tilde{x}_{1}<L$. Then we claim that in that case, $u'$ is constant, say equal to $c>0$, in $(0,x_{1})\cup(\tilde{x}_{1},L)$, $w=c$ in $(0,L)$ and $v$ is affine in $(x_{1},\tilde{x}_{1})$ with gradient equal to $\alpha$ there. Indeed, let $c$ be the value of the derivative of $u$ in $(0,x_{1})$. We know already from Proposition \ref{u_smaller_fprop} that $w=u'$ there. Now, if $w\ne c$ on a set of positive measure in $(x_{1},\tilde{x}_{1})$, that will mean that $Dw\ne 0$ somewhere in $(x_{1},\tilde{x}_{1})$ and thus from \eqref{C3}, $v$ must be $\pm \beta$ somewhere in $(x_{1},\tilde{x}_{1})$. But this is not possible since $v$ is strictly convex in $(0,x_{1})$, $v'^{+}(0)=0$, affine in $(x_{1},\tilde{x}_{1})$ and thus $v$ is strictly increasing in $(0,\tilde{x}_{1})$. Moreover, $w=v'=c$ in $(\tilde{x}_{1},L)$ because otherwise $w$ would have a jump discontinuity on $\tilde{x}_{1}$, something that forces $v(\tilde{x}_{1})=\beta$ but this cannot happen since $v'(\tilde{x}_{1})>0$. Finally, since $Du-w=-c$ in $(x_{1},\tilde{x}_{1})$, from condition \eqref{C2} we have that $v'=\alpha$ there. 

Also note that since $u$ has a jump discontinuity at $x=L$, condition \eqref{C2} forces $v'(L)=-\alpha$ and from the symmetry of $v$ we must have $v(L)=0$. 

For convenience we set 
\[\ell_{1}=x_{1}>0,\quad \ell_{2}=\tilde{x}_{1}-x_{1}>0, \quad \ell_{3}=L-\tilde{x}_{1}>0 \quad \text{with }\ell_{1}+\ell_{2}+\ell_{3}=L.\]
and 
\[v_{1}=v\lfloor(0,x_{1}),\quad v_{2}=v\lfloor(x_{1},\tilde{x}_{1}),\quad v_{3}=v\lfloor(\tilde{x}_{1},L).\]
Since $u$ is an affine function on $(0,x_{1})$ we have that $v_{1}$ is a cubic function
\[v_{1}(x)=a_{1}x^{3}+b_{1}x^{2}+c_{1}x+d_{1},\quad x\in (0,x_{1}),\]
that satisfies the conditions 
\begin{equation}\label{v1_cond}
v_{1}(0)=0,\quad v_{1}'(0)=0,\quad v_{1}'(\ell_{1})=\alpha,\quad v_{1}''(\ell_{1})=0.
\end{equation}
Thus $v_{1}$ will  have the form
\begin{equation}\label{v1}
v_{1}(x)=-\frac{\alpha}{3\ell_{1}^{2}}x^{3}+\frac{\alpha}{\ell_{1}}x^{2},\quad x\in (0,x_{1}).
\end{equation}
Since $v_{2}$ is an affine function on $(x_{1},x_{2})$ with gradient $\alpha$ and $v$ is continuous at $x_{1}$, we have that $v_{2}$ will be of the form
\begin{equation}\label{v2}
v_{2}(x)=\alpha(x-\ell_{1})+\frac{2\alpha \ell_{1}}{3},\quad x\in (x_{1},\tilde{x}_{1}).
\end{equation}
Since $u$ is a strictly positive affine function on $(\tilde{x}_{1},L)$ (with the same gradient as in $(0,x_{1})$), we have that $v_{3}$ must be a cubic function
\[v_{3}(x)=a_{3}x^{3}+b_{3}x^{2}+c_{3}x+d_{3},\quad x\in (\tilde{x}_{1},L),\]
that satisfies the following conditions at $x=\tilde{x}_{1}=\ell_{1}+\ell_{2}$:
\begin{equation}\label{v3_cond1}
v_{3}(\ell_{1}+\ell_{2})=\alpha\ell_{2}+\frac{2\alpha\ell_{1}}{3},\quad v_{3}'(\ell_{1}+\ell_{2})=\alpha,\quad v_{3}''(\ell_{1}+\ell_{2})=0,\quad v_{3}'''(\ell_{1})(\ell_{1}+\ell_{2})=-\frac{2\alpha}{\ell_{1}^{2}},
\end{equation}
and also the following conditions at $x=L=\ell_{1}+\ell_{2}+\ell_{3}$:
\begin{equation}\label{v3_cond2}
v_{3}(\ell_{1}+\ell_{2}+\ell_{3})=0,
\end{equation}
\begin{equation}\label{v3_cond3}
 v_{3}'(\ell_{1}+\ell_{2}+\ell_{3})=-\alpha. 
\end{equation}
The conditions \eqref{v3_cond1} give
\begin{equation}\label{v1_1}
v_{3}(x)=-\frac{\alpha}{3\ell_{1}^{2}}(x-\ell_{1}-\ell_{2})^{3}+\alpha(x-\ell_{1}-\ell_{2})+\alpha\ell_{2}+\frac{2\alpha\ell_{1}}{3}, \quad x\in (\tilde{x}_{1},L).
\end{equation}
What is left is to find the relationship among $\ell_{1}$, $\ell_{2}$ and $\ell_{3}$.
Condition \eqref{v3_cond3} gives
\begin{equation}\label{el3el1}
\ell_{3}=\sqrt{2}\ell_{1},
\end{equation}
and the condition \eqref{v3_cond2} together with \eqref{el3el1} gives
\begin{equation}\label{el2el1}
\frac{2+\sqrt{2}}{3}\ell_{1}+\ell_{2}=0,
\end{equation}
which is a contradiction since both $\ell_{1}$ and $\ell_{2}$ are strictly positive. Thus, this kind of solution cannot occur.
\end{proof}

From all the previous results, it follows that only the following situations can occur:
\begin{align}
\label{NP1J}\tag{N-P1-J}
&u<0 \text{ in } (0,x_{1}),\; u>0 \text{ with 1 affine part in } (x_{1},L), \text{ jump at } x=L,\\
\label{NP2J}\tag{N-P2-J}
&u<0 \text{ in } (0,x_{1}),\; u>0 \text{ with 2 affine parts in } (x_{1},L), \text{ jump at } x=L,\\
\label{NP1C}\tag{N-P1-C}
&u<0 \text{ in } (0,x_{1}),\; u>0 \text{ with 1 affine part in } (x_{1},L), \text{ continuous at } x=L,\\
\label{NP2C}\tag{N-P2-C}
&u<0 \text{ in } (0,x_{1}),\; u>0 \text{ with 2 affine parts in } (x_{1},L), \text{ continuous at } x=L,
\end{align}
where $0<x_{1}<L$.

\begin{figure}
\begin{center}
\subfloat[Solution of the type \newline\ref{NP1J}.]
{
\hspace{-0.6 cm}\includegraphics[scale=0.25]{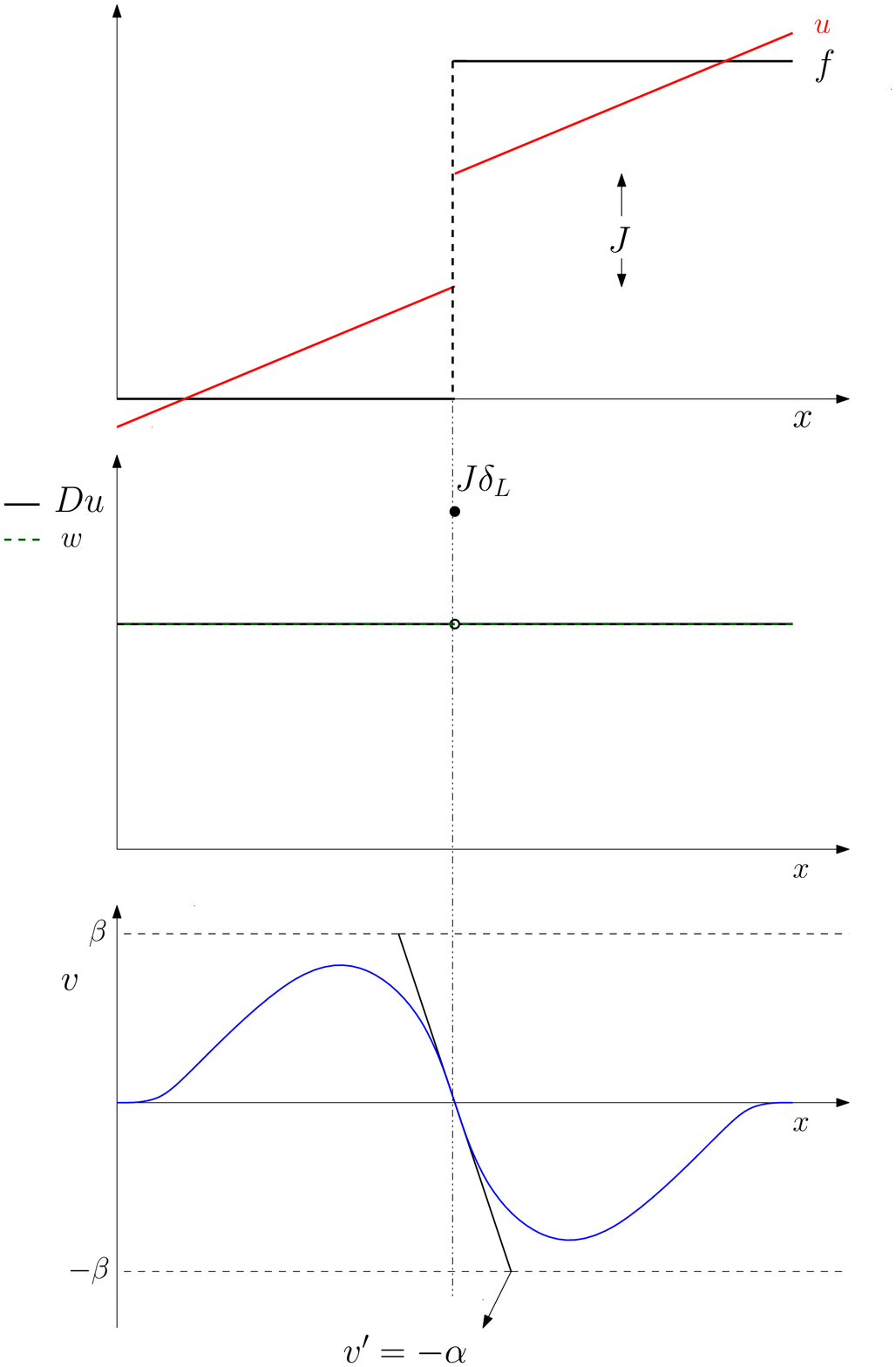}
}
\subfloat[Solution of the type \newline\ref{NP2J}.]{
\includegraphics[scale=0.25]{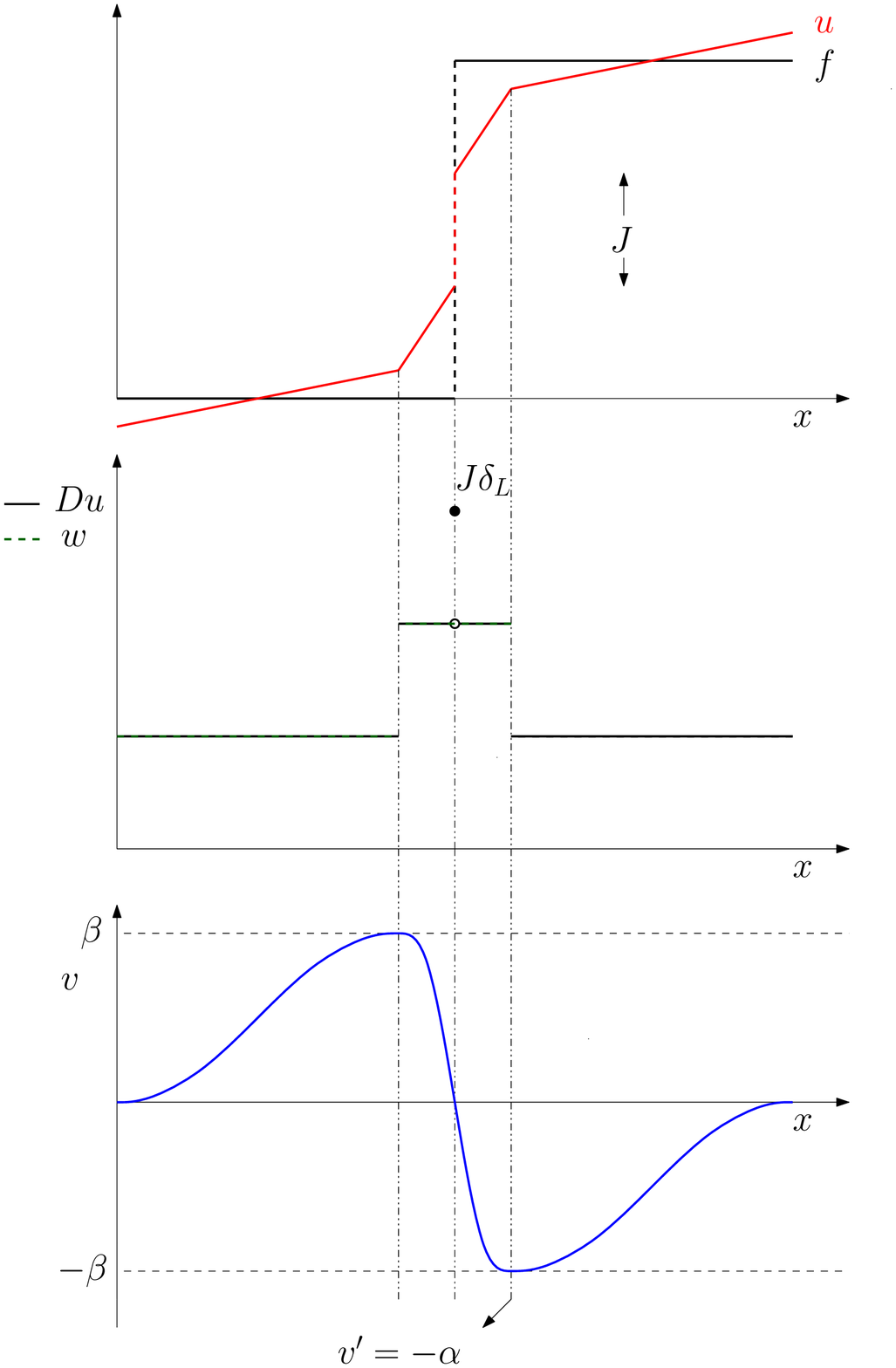}
}
\subfloat[Solution of the type \newline\ref{NP1C}.]{
\includegraphics[scale=0.25]{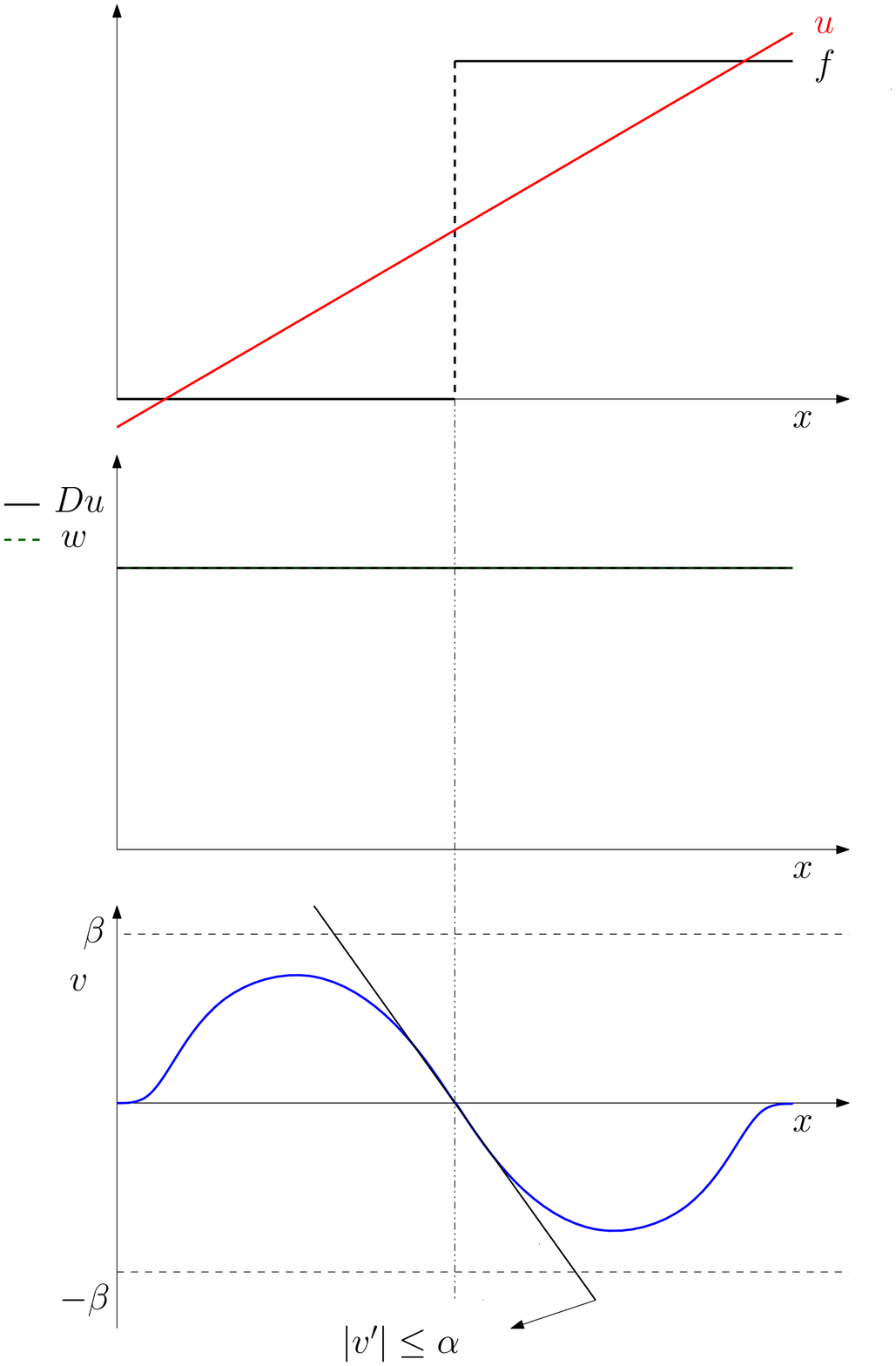}
}
\subfloat[Solution of the type \newline\ref{NP2C}.]{
\includegraphics[scale=0.25]{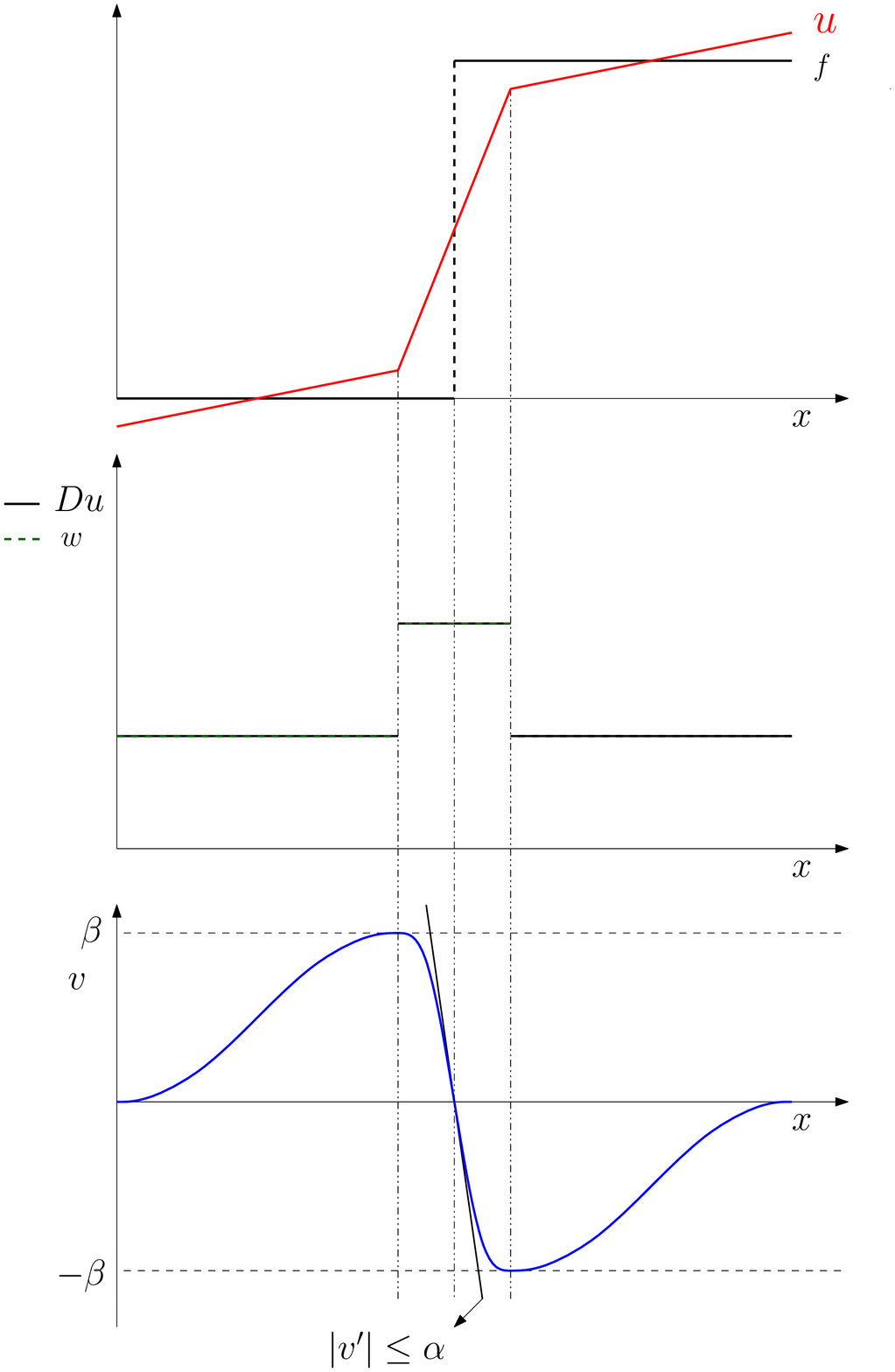}
}
\caption{All the possible types of solution $u$ to the minimisation problem \eqref{P} for the data function $f$ defined in \eqref{f_1}. We also show the forms of the corresponding variables $w$ and $v$.}
\label{allowed}
\end{center}
\end{figure}

In Figure \ref{allowed} we provide a qualitative description on how these four allowed solutions look like, along with the corresponding variables $w$ and $v$. Notice that the solution of the type \ref{NP1C} will be in fact the $L^{2}$--linear regression of $f$. Our next step is to identify the combinations of the parameters $\alpha$ and $\beta$ that lead to each type of solution.

\newtheorem{prop_np1j}[first]{Proposition}
\begin{prop_np1j}[\textbf{\ref{NP1J}}]\label{prop_np1j_label}
The solution $u$ of the problem \eqref{P} with the data function $f$ defined in \eqref{f_1} is of the type \emph{\ref{NP1J}}, see Figure \ref{allowed}(a), if and only if $\alpha$ and $\beta$ satisfy the conditions
\begin{equation}\label{np1j_conditions}
\frac{\beta}{\alpha}\ge \frac{4L}{27}\quad \text{ and }\quad \alpha< \frac{hL}{8}.
\end{equation}
More specifically in that case:
\begin{equation}
\label{u_np1j}
u=
\begin{cases}
\frac{6\alpha}{L^{2}}x-\frac{2\alpha}{L},\quad &\text{if}\quad x\in(0,L),\\
\frac{6\alpha}{L^{2}}(x-L)+h-\frac{4\alpha}{L},\quad &\text{if}\quad x\in(L,2L),
\end{cases}
\end{equation}
i.e., $u$ has a jump discontinuity of size $h-8\alpha/L$ at $x=L$.
\end{prop_np1j}

\begin{proof}
It is easy to check that the solution $u$ will be of the type \ref{NP1J} if and only if we can find a function $v$ in $[0,L]$
 that satisfies the following conditions:
\begin{align}
\label{v_cubic} \text{$v$ is a cubic polynomial,}\qquad     & (\text{from condition } \eqref{C1}, v''=-u, \text{ is an affine function}),\\
\label{v_bc} v(0)=0,\; v'(0)=0, \qquad& \text{(boundary conditions for $v$),}\\
\label{v_symmetry} v(L)=0,	\qquad			& \text{($v$ is an odd function),}\\
\label{v_jump}  v'(L)=-\alpha,\qquad 			& (\text{$u$ jumps at $L$, condition \eqref{C2}}),\\
\label{v_u}      0<-v''(L)<\frac{h}{2},\qquad	& (\text{$u$ jumps at $L$ $\&$ symmetry}),\\
\label{v_c2} |v'|\le \alpha,		\qquad 		&(\text{condition \eqref{C2}}),\\
\label{v_c3} |v|\le \beta,		\qquad 		&(\text{condition \eqref{C3}}).
\end{align}
It is now easy to check that the function
\[v(x)=-\frac{\alpha}{L^{2}}x^{3}+\frac{\alpha}{L}x^{2},\quad x\in [0,L],\]
satisfies conditions \eqref{v_cubic}--\eqref{v_jump} and also the condition \eqref{v_c2}. Observe that $v$ obtains it maximum in $[0,L]$ at $x=2L/3$, with $v(2L/3)=4\alpha L/27$, thus \eqref{v_c3} can be written as
\[\frac{4\alpha L}{27}\le \beta.\]
Finally, condition \eqref{v_u} can be written as
\[0<\frac{4\alpha}{L}<\frac{h}{2}.\]
The last two inequalities form the conditions in \eqref{np1j_conditions}.
Using the fact that $u=-v''$ in $(0,L)$ and taking advantage of the symmetry of u, we have \eqref{u_np1j}.
\end{proof}

We now proceed to the study of the case \ref{NP2J}. As we will see, it is computationally inaccessible to find the exact conditions on $\alpha$ and $\beta$ that result to this kind of solution but we are able to provide some sufficient conditions that are not far away from being necessary as well.

\newtheorem{prop_np2j}[first]{Proposition}
\begin{prop_np2j}[\textbf{\ref{NP2J}}]\label{prop_np2j_label}
If $\alpha$ and $\beta$ satisfy the conditions
\begin{equation}\label{np2j_conditions}
\frac{\beta}{\alpha}< \frac{4L}{27}\quad \text{ and }\quad \beta>\frac{4\alpha^{2}}{3h},
\end{equation}
then the solution $u$ of the problem \eqref{P} with the data function $f$ defined in \eqref{f_1} is of the type \emph{\ref{NP2J}}, see Figure \ref{allowed}(b). More specifically, there exist  points $0<x_{1}<x_{2}<0$
with
\[x_{1}=\frac{x_{2}}{2}\quad \text{ and }\quad L-\frac{3\beta}{\alpha}<x_{2}<L-\frac{9\beta}{4\alpha},\]
such that $u<0$ and $u>0$ in $(0,x_{1})$ and $(x_{1},L)$ respectively with $u'$ having a positive jump at $x_{2}$.
\end{prop_np2j}

\begin{proof}
Again, it is easily verified  that $u$ will be a solution of the type \ref{NP2J} if and only if we can find $\ell_{1}>0$, $\ell_{2}>0$ with $\ell_{1}+\ell_{2}=L$ and two functions  $v_{1}$, $v_{2}$ defined on $[0,\ell_{1}]$ and $[\ell_{1},\ell_{1}+\ell_{2}]$ respectively, such that the following conditions are satisfied:
\begin{align}
\label{v1v2_cubic} \text{$v_{1}$, $v_{2}$ are cubic polynomials},\qquad & (\text{from condition } \eqref{C1}),\\
\label{v1_bc} v_{1}(0)=0,\; v_{1}'(0)=0, \qquad 							& (\text{boundary conditions for $v$}),\\
\label{v1_ell1} v_{1}(\ell_{1})=\beta,\;v_{1}'(\ell_{1})=0,\qquad 			& (\text{$v$ has a maximum at $\ell_{1}$, condition \eqref{C2}}),\\
\label{v2_ell1}v_{2}(\ell_{1})=\beta,\;v_{2}'(\ell_{1})=0,\;v_{2}''(\ell_{1})=v_{1}''(\ell_{1}),\qquad & (\text{continuity of $v$, $v'$ and u}),\\
\label{v2prime_jump} v_{2}'''(\ell_{1})<v_{1}'''(\ell_{1}),\qquad & \text{($u'$ has a positive jump at $\ell_{1}$)}\\
\label{v2_symmetry} v_{2}(L)=0,	\qquad			& \text{($v$ is an odd function),}\\
\label{v2_jump}  v_{2}'(L)=-\alpha,\qquad 			& (\text{$u$ jumps at $L$, condition \eqref{C2}}),\\
\label{v2_u}      0<-v_{2}''(L)<\frac{h}{2},\qquad	& (\text{$u$ jumps at $L$ $\&$ symmetry}),\\
\label{v12_c2} |v_{i}'|\le \alpha,\;i=1,2,		\qquad 		&(\text{condition \eqref{C2}}),\\
\label{v12_c3} |v_{i}|\le \beta, \;i=1,2,		\qquad 		&(\text{condition \eqref{C3}}).
\end{align}
From conditions \eqref{v1v2_cubic}--\eqref{v1_ell1}, we get that
\begin{equation}\label{v1_equi}
v_{1}(x)=-\frac{2\beta}{\ell_{1}^{3}}x^{3}+\frac{3\beta}{\ell_{1}^{2}}x^{2},\quad x\in [0,\ell_{1}].
\end{equation}
We can also check that condition \eqref{v12_c2} for $v_{2}$ is equivalent to
\begin{equation}\label{v12_c2_equi}
\frac{3\beta}{2\alpha}\le \ell_{1},
\end{equation}
while condition \eqref{v12_c3} is satisfied. After some computations, we have that conditions \eqref{v2_ell1}, \eqref{v2_symmetry}, \eqref{v2_jump} give that $v_{2}$ will be of the form
\begin{equation}\label{v2_equi}
v_{2}(x)=\left (\frac{2\beta}{\ell_{1}^{2}\ell_{2}}-\frac{\alpha}{3\ell_{2}^{2}} \right )(x-\ell_{1})^{3}-\frac{3\beta}{\ell_{1}^{2}}(x-\ell_{1})^{2}+\beta.
\end{equation}
We also get the following relationship between $\ell_{1}$ and $\ell_{2}$:
\begin{equation}\label{ell1_ell2}
\gamma\ell_{2}^{2}+\ell_{2}\ell_{1}^{2}-\gamma\ell_{1}^{2}=0,\quad \text{ with }\quad\gamma:=\frac{3\beta}{\alpha},
\end{equation}
where one can check that $\ell_{2}<\gamma$ independently of the value of $\ell_{1}$. Notice now, that if the condition \eqref{v2prime_jump} is satisfied, then $v_{2}$ will be decreasing, with decreasing derivative as well something that would imply that conditions \eqref{v12_c2}--\eqref{v12_c3} hold for $v_{2}$. With the help of \eqref{v2_equi}, \eqref{ell1_ell2} and condition $\ell_{1}+\ell_{2}=L$, we get that condition \eqref{v2prime_jump} is equivalent to
\begin{equation}\label{v2prime_jump_equi}
\ell_{1}^{2}-2L\ell_{1}+2\gamma L<0.
\end{equation}
The inequality \eqref{v2prime_jump_equi} is true if and only if
\begin{equation}\label{v2prime_jump_equi2}
\gamma <\frac{L}{2}\quad \text{and} \quad L-\sqrt{L^{2}-2\gamma L}<\ell_{1}\iff \ell_{2}<\sqrt{L^{2}-2\gamma L}.
\end{equation}
From the fact that $\ell_{1}+\ell_{2}=L$ and \eqref{ell1_ell2} we get that $\ell_{2}$ is the unique solution of
\begin{equation}\label{phi_ell2}
\phi(\ell_{2}):=\sqrt{\frac{\gamma\ell_{2}^{2}}{\gamma-\ell_{2}}}+\ell_{2}-L=0.
\end{equation}
In view of \eqref{phi_ell2} we have that $\ell_{2}=\sqrt{L^{2}-2\gamma L}$ if and only if $\gamma=4L/9$. Since $\phi$ is strictly increasing, one can check that \eqref{v2prime_jump_equi2} is equivalent to the very simple expression
\begin{equation}\label{v2prime_jump_equi3}
\frac{\beta}{\alpha}<\frac{4L}{27}.
\end{equation}
Notice that in that case \eqref{v12_c2_equi} is also satisfied. Finally, the last condition that has to be satisfied is \eqref{v2_u}. Using \eqref{v2_equi} and \eqref{ell1_ell2} it can be checked that it is equivalent to
\begin{equation}\label{v2_u_equi}
\frac{4\ell_{2}-2\gamma}{\ell_{2}^{2}}<\frac{h}{2\alpha}.
\end{equation}
Ideally, one would like to obtain an explicit expression for $\ell_{2}$ from \eqref{phi_ell2} and obtain an inequality involving $\alpha$ and $\beta$ using \eqref{v2_u_equi}. However, this is practically impossible as one would have to solve a cubic equation for $\ell_{2}$. This is why we are giving some estimates instead. One can check that from \eqref{phi_ell2} and \eqref{v2prime_jump_equi3} we can derive that
\begin{equation}\label{estimate_ell2}
\frac{3}{4}\gamma<\ell_{2}<\gamma.
\end{equation}
We have that the expression $(4\ell_{2}-2\gamma)/\ell_{2}^{2}$ in \eqref{v2_u_equi} is a strictly increasing function of $\ell_{2}$ provided that $\ell_{2}<\gamma$ which is true in our case. Thus, in order to satisfy \eqref{v2_u_equi} a sufficient (but not necessary) condition is
\[\frac{4\gamma-2\gamma}{\gamma^{2}}<\frac{h}{2\alpha} \iff \beta>\frac{4\alpha^{2}}{3h}.\]
\end{proof}

\newtheorem*{remark1}{Remark}
\begin{remark1}
\emph{Let us also point out the following fact: Suppose that $\alpha^{\ast}$ and $\beta^{\ast}$ are such, so that conditions \eqref{np2j_conditions} hold, i.e., the solution is of the type \ref{NP2J}. Then there exists a $0<c<\beta^{\ast}$ such that for every $\beta\le c$ the condition \eqref{v2_u_equi} is violated, so we do not have this type of solution. Indeed from \eqref{v2_u_equi} and \eqref{estimate_ell2} we have that
\[\frac{16}{9\gamma}<\frac{4\ell_{2}-2\gamma}{\ell_{2}^{2}}<\frac{h}{2\alpha}.\]
Thus keeping $\alpha$ fixed and choosing $\beta$ such that
\begin{equation}\label{final_condition}
\frac{16}{\gamma}>\frac{h}{2\alpha} \iff \beta<\frac{32\alpha^{2}}{27h}
\end{equation}
we cannot have the solution of the type \ref{NP2J} any more.
}
\end{remark1}

We now turn our attention to the solution of the type \ref{NP1C}. As we mentioned earlier, in that case $u$ is an affine function and thus also the $L^{2}$--linear regression of $f$. In Theorem \ref{l2regression_thm} we gave some thresholds for $\alpha$ and $\beta$ in the general case but we can be more explicit in this specific example.
\newtheorem{prop_np1c}[first]{Proposition}
\begin{prop_np1c}[\textbf{\ref{NP1C}}]
The solution $u$ of the problem \eqref{P} with the data function defined in \eqref{f_1} is of the type \emph{\ref{NP1C}}, see Figure \ref{allowed}(c), if and only if $\alpha$ and $\beta$ satisfy the conditions
\begin{equation}\label{np1c_conditions}
\alpha\ge \frac{hL}{8} \quad \text{ and }\quad \beta\ge \frac{hL^{2}}{54}.
\end{equation}
Moreover, in that case $u$ will be the $L^{2}$--linear regression of $f$ and equal to
\begin{equation}\label{f_l2regression}
u(x)=\frac{3h}{4L}x-\frac{h}{4},\quad x\in (0,2L).
\end{equation}
\end{prop_np1c}

\begin{proof}
Again we can check that the solution will be of the  type \ref{NP1C} if and only if we can find  a function $v$ defined on $[0,L]$ such that the following conditions hold:
\begin{align}
\label{vreg_cubic}\text{$v$ is a cubic polynomial},\qquad &\\
\label{vreg_bc} v(0)=0,\;v'(0)=0,\qquad &\\
\label{vreg_odd} v(L)=0,\qquad &\\
\label{vreg_ctn}-v''(L)=\frac{h}{2}, \qquad &\text{(condition \eqref{C1}, continuity at L \& symmetry)},\\
\label{vreg_a} |v'|\le \alpha, \qquad &\\
\label{vreg_b} |v|\le \beta. \qquad &
\end{align}
We can easily check that conditions \eqref{vreg_cubic}--\eqref{vreg_ctn} give
\begin{equation}\label{vreg_explicit}
v(x)=-\frac{h}{8L}x^{3}+\frac{h}{8}x^{2},\quad x\in [0,L].
\end{equation}
Moreover, the maximum values of $|v|$ and $|v'|$ are $\frac{hL^{2}}{54}$ and $\frac{hL}{8}$ respectively. Thus, conditions \eqref{vreg_a}--\eqref{vreg_b} will be satisfied if and only if \eqref{np1c_conditions} holds.
\end{proof}

Finally, we investigate under which combinations of $\alpha$ and $\beta$ we have solutions of the type \ref{NP2C}. As in the case \ref{NP2J}, it is computationally inaccessible to provide exact conditions, thus we provide again some sufficient but not necessary conditions.

\newtheorem{prop_np2c}[first]{Proposition}
\begin{prop_np2c}[\textbf{\ref{NP2C}}]\label{np2c_prop}
If $\alpha$ and $\beta$ satisfy the conditions
\begin{equation}\label{np2c_conditions}
\beta<\frac{hL^{2}}{54}\quad \text{ and }\beta\le \frac{4L\alpha}{9}-\frac{L^{2}}{27},
\end{equation}
then the solution $u$ of the problem \eqref{P} with the data function $f$ defined in \eqref{f_1} is of the type \emph{\ref{NP2C}}, see Figure \ref{allowed}(d), and $u'$ has a positive jump at a point $x_{2}$ where $\frac{2L}{3}<x_{2}<L$.
\end{prop_np2c}

\begin{proof}
As in the \ref{NP2J} case, $u$ will be a solution of the type \ref{NP2C} if and only if we can find $\ell_{1}>0$, $\ell_{2}>0$ with $\ell_{1}+\ell_{2}=L$ and two functions defined on $[0,\ell_{1}]$ and $[\ell_{1},\ell_{1}+\ell_{2}]$ respectively, such that the following conditions are satisfied:
\begin{align}
\label{c_v1v2_cubic} \text{$v_{1}$, $v_{2}$ are cubic polynomials},\qquad &\\
\label{c_v1_bc} v_{1}(0)=0,\; v_{1}'(0)=0, \qquad 							& \\
\label{c_v1_ell1} v_{1}(\ell_{1})=\beta,\;v_{1}'(\ell_{1})=0,\qquad 			& \\
\label{c_v2_ell1}v_{2}(\ell_{1})=\beta,\;v_{2}'(\ell_{1})=0,\;v_{2}''(\ell_{1})=v_{1}''(\ell_{1}),\qquad &\\
\label{c_v2prime_jump} v_{2}'''(\ell_{1})<v_{1}'''(\ell_{1}),\qquad & \\
\label{c_v2_symmetry} v_{2}(L)=0,	\qquad			&\\
\label{c_v2_u}      -v''(L)=\frac{h}{2}, \qquad &\text{(condition \eqref{C1}, continuity at L \& symmetry)},\\
\label{c_v12_c2} |v_{i}'|\le \alpha,\;i=1,2,		\qquad 		&\\
\label{c_v12_c3} |v_{i}|\le \beta, \;i=1,2.		\qquad 		&
\end{align}
Conditions \eqref{c_v1v2_cubic}--\eqref{c_v1_ell1} together with \eqref{c_v12_c2}--\eqref{c_v12_c3} yield
\begin{equation}\label{c_v1_explicit}
v_{1}(x)=-\frac{2\beta}{\ell_{1}^{3}}x^{3}+\frac{3\beta}{\ell_{1}^{2}}x^{2},\quad x\in [0,\ell_{1}],
\end{equation}
with
\begin{equation}\label{v1primecond}
\frac{3\beta}{2\alpha}\le \ell_{1}.
\end{equation}
Supposing that $v_{2}$ is of the form
\[v_{2}(x)=a_{3}(x-\ell_{1})^{3}+a_{2}(x-\ell_{1})^{2}+a_{1}(x-\ell_{1})+a_{0},\quad x\in [\ell_{1},\ell_{1}+\ell_{2}],\]
conditions \eqref{c_v2_ell1}, \eqref{c_v2_symmetry}, \eqref{c_v2_u} give
\begin{equation}\label{c_v2_explicit}
v_{2}(x)=a_{3}(x-\ell_{1})^{3}-\frac{3\beta}{\ell_{1}^{2}}(x-\ell_{1})^{2}+\beta,\quad x\in  [\ell_{1},\ell_{1}+\ell_{2}],
\end{equation}
\begin{eqnarray}
a_{3}\ell_{2}^{3}-\frac{3\beta\ell_{2}^{2}}{\ell_{1}^{2}}+\beta&=&0,\label{a3_1}\\
-6a_{3}\ell_{2}+\frac{6\beta}{\ell_{1}^{2}}&=&\frac{h}{2}.\label{a3_2}
\end{eqnarray}
One can check that conditions \eqref{c_v2_ell1} and \eqref{c_v2_u} impose $v_{2}'$ to be decreasing , so in order to satisfy \eqref{c_v12_c2} and \eqref{c_v12_c3} it suffices to impose $|v_{2}'(\ell_{1}+\ell_{2})|\le \alpha$, which, with the help of \eqref{c_v2_explicit} and \eqref{a3_2}, is equivalent to
\begin{equation}\label{ell_1_2_ineq}
\frac{3\beta\ell_{2}}{\ell_{1}^{2}}+\frac{h\ell_{2}}{4}\le \alpha.
\end{equation}
Moreover combining \eqref{a3_1}, \eqref{a3_2} and the fact that $\ell_{1}+\ell_{2}=L$, we can get a relationship between $\ell_{1}$ and $\ell_{1}$ and an equation for $\ell_{2}$:
\begin{equation}\label{ell1_ell2_c}
 \ell_{2}=\sqrt{\frac{12\beta}{h+\frac{24\beta}{\ell_{1}^{2}}}},
\end{equation}
\begin{equation}\label{ell2_L}
\phi(\ell_{1}):=\ell_{1}+\sqrt{\frac{12\beta}{h+\frac{24\beta}{\ell_{1}^{2}}}}-L=0.
\end{equation}
It is easy to see that the equation \eqref{ell2_L} has a unique solution in $(0,L)$ but one has to solve a quartic in order to express it explicitly. Thus, as in the case \ref{NP2J} we give some estimates. Observe firstly that using \eqref{c_v1_explicit}, \eqref{c_v2_explicit}, \eqref{a3_2} and \eqref{ell1_ell2_c} condition \eqref{c_v2prime_jump} is satisfied if and only if
\begin{equation}\label{c_v2prime_jump_equi}
\frac{18\beta\ell_{1}}{\ell_{2}}-\frac{6\beta\ell_{1}^{3}}{\ell_{2}^{3}}<-12\beta,
\end{equation}
which is satisfied if and only if $\ell_{2}<\frac{\ell_{1}}{2}$. However, from \eqref{ell1_ell2_c}, this is true if and only if $\ell_{1}>\sqrt{\frac{24\beta}{h}}$. From the fact that the function $\phi$ is strictly increasing, the last inequality is true if and only if 
\begin{equation}\label{np2c_cond1}
\beta<\frac{hL^{2}}{54}.
\end{equation}
It remains to identify some sufficient conditions for \eqref{ell_1_2_ineq}. Observe that under \eqref{np2c_cond1} we have $\phi(2L/3)<0$ and $\phi(L>0)$, which means that
\begin{equation}\label{final_el12}
\frac{2L}{3}<\ell_{1}<L \quad \text{ and }\quad 0<\ell_{2}<\frac{L}{3}.
\end{equation}
Using \eqref{final_el12}, we find that a sufficient (but not necessary) condition for \eqref{ell_1_2_ineq}
is
\begin{equation}\label{np2c_cond2}
\frac{3\beta}{(\frac{2L}{3})^{2}}\frac{L}{3}+\frac{hL}{12}<\alpha \iff \beta\le \frac{4L\alpha}{9}-\frac{L^{2}h}{27}.
\end{equation}
We can easily verify that under \eqref{np2c_cond2} the condition \eqref{v1primecond} is satisfied as well.
\end{proof}

\begin{figure}[h!]
\begin{center}
\includegraphics[scale=0.45]{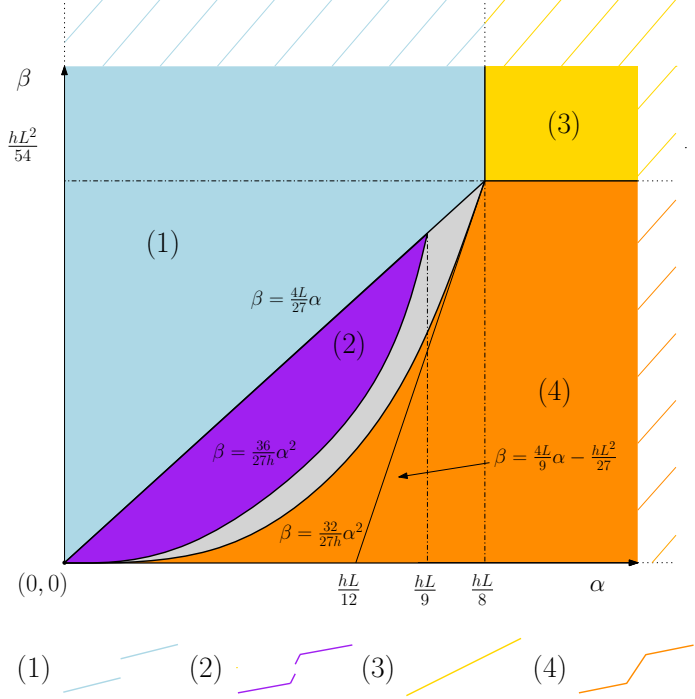}
\caption{Illustration of the four different types of solutions of \eqref{P} for the piecewise constant function $f$ that result for different combinations of the parameters $\alpha$ and $\beta$.}
\label{f_alpha_beta}
\end{center}
\end{figure}

We now summarise how the solutions of the problem \eqref{P} with the data function $f$ are affected by the different choices of the parameters $\alpha$ and $\beta$. In Figure \ref{f_alpha_beta} we have partitioned the set $\{\alpha>0,\;\beta>0\}$ into different areas that correspond to the four different possible solutions. These areas are:
\begin{enumerate}[(1)]
\item Blue colour: $\left\{\alpha<\frac{hL}{8},\; \beta\ge \frac{4L}{27}\alpha\right\}$, (necessary and sufficient condition), \textbf{\ref{NP1J}}, see Figure \ref{allowed}(a).
\item Purple colour: $\left\{\beta<\frac{4L}{27}\alpha,\;\beta\ge \frac{36}{27h}\alpha^{2}\right\}$, (sufficient condition), \textbf{\ref{NP2J}}, see Figure \ref{allowed}(b).
\item Yellow colour: $\left\{\alpha\ge \frac{hL}{8},\; \beta\ge \frac{hL^{2}}{54}\right\}$, (necessary and sufficient condition), \textbf{\ref{NP1C}}, see Figure \ref{allowed}(c).
\item Orange colour: $\left\{\beta\le \frac{32}{27h}\alpha^{2},\; \beta<\frac{hL^{2}}{54}\right\}$, (sufficient condition), \textbf{\ref{NP2C}}, see Figure \ref{allowed}(d).
\end{enumerate}
\vspace{0.2 cm}

Notice that according to Proposition \ref{np2c_prop} our initial sufficient conditions for the solution of the type \ref{NP2C} to happen were $\beta<\frac{hL^{2}}{54}$ and $\beta\le \frac{4L}{9}\alpha-\frac{L^{2}h}{27}$. However, according to the Remark after Proposition \ref{prop_np2j_label} when condition $\beta\le \frac{32}{27h}\alpha^{2}$ holds then the solution of the type \ref{NP2J} cannot happen and since the conditions for \ref{NP1C} and \ref{NP2C} are necessary and sufficient we conclude that when $\beta\le \frac{32}{27h}\alpha^{2}$ holds (together with $\alpha\le\frac{hL}{8}$) then the solution of the type \ref{NP2C} occur. Notice that 
\[\{\alpha>0,\;\beta>0:\;\beta\le\frac{4L}{9}\alpha-\frac{L^{2}h}{27},\; \alpha\le\frac{hL}{8}\}\subseteq \{\alpha>0,\;\beta>0:\;\beta\le \frac{32}{27h}\alpha^{2},\;\alpha\le\frac{hL}{8}\},\] see Figure \ref{f_alpha_beta}.

As we mentioned, due to computational issues it is difficult to derive necessary and sufficient conditions for the solutions of the type \ref{NP2J} and  \ref{NP2C}. However, the estimates we have provided are not far away from being sharp as the unknown grey area in Figure \ref{f_alpha_beta}, $\{\beta>\frac{32}{27h}\alpha^{2},\;\beta<\frac{36}{27h}\alpha^{2},\;\beta<\frac{4L}{27}\alpha\}$ is relatively small.

\subsection{Piecewise affine function with a single jump}

In this section we are choosing the data function to be a simple piecewise affine function $g$, see Figure \ref{g_jump}. However, as we will see, we do not have to perform the previous computations to identify solutions for \eqref{P} as the solutions have a close connection with the ones that correspond to the piecewise constant function $f$.

\begin{figure}[h!]
\begin{center}
\includegraphics[scale=0.40]{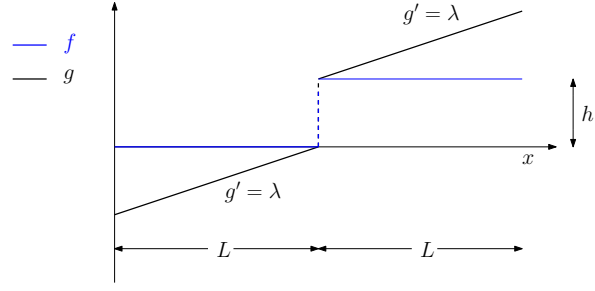}
\caption{The piecewise affine function $g$. It has a jump discontinuity at $x=L$ and the gradient in the affine parts is equal to $\lambda$.}
\label{g_jump}
\end{center}
\end{figure}

\newtheorem{connection}[first]{Proposition}
\begin{connection}\label{connectionlabel}
Let  $f$ and $g$ be the following functions on $(0,2L)$:
\begin{equation*}\label{gjump}
f(x)=
\begin{cases}
0 & \text{ if }\;\;\; x\in (0,L),\\
h & \text{ if }\;\;\; x\in (L,2L),
\end{cases}
\qquad\
g(x)=
\begin{cases}
\lambda(x-L) &  \text{if }\;x\in (0,L),\\
\lambda(x-L)+h &  \text{if }\;x\in (L,2L).
\end{cases}
\end{equation*}
Then $u_{f}$ is a solution to the minimisation problem \ref{P} with data function $f$ if and only if $u_{g}$ is a solution with data function $g$, where
\[u_{g}(x)=u_{f}(x)+\lambda(x-L).\]
\end{connection}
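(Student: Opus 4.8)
The plan is to exploit the affine invariance of $\mathrm{TGV}^{2}_{\beta,\alpha}$ together with the elementary observation that $g=f+\ell$, where $\ell(x):=\lambda(x-L)$. Indeed, on $(0,L)$ one has $g(x)=\lambda(x-L)=f(x)+\ell(x)$ since $f\equiv 0$ there, and on $(L,2L)$ one has $g(x)=\lambda(x-L)+h=f(x)+\ell(x)$ since $f\equiv h$; hence $g-\ell=f$ on all of $\Omega=(0,2L)$.

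First I would record that for every $u\in\mathrm{BV}(\Omega)$ and every affine function $\ell$ one has $\mathrm{TGV}^{2}_{\beta,\alpha}(u+\ell)=\mathrm{TGV}^{2}_{\beta,\alpha}(u)$, with matching minimisers $w$. This is immediate from the minimum formulation \eqref{tgv1d_min}: writing $\ell(x)=\lambda(x-L)$, the map $w\mapsto w+\lambda$ is a bijection of $\mathrm{BV}(\Omega)$ onto itself, and since $D(u+\ell)-(w+\lambda)=Du-w$ and $D(w+\lambda)=Dw$, the quantity $\alpha\|D(u+\ell)-(w+\lambda)\|_{\mathcal{M}}+\beta\|D(w+\lambda)\|_{\mathcal{M}}$ equals $\alpha\|Du-w\|_{\mathcal{M}}+\beta\|Dw\|_{\mathcal{M}}$; taking the infimum over $w$ gives the identity, and $(u,w)$ attains the minimum for $u$ exactly when $(u+\ell,w+\lambda)$ attains it for $u+\ell$.

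Next I would combine this with the data term. For the objective $J_{g}(u,w):=\frac12\int_{\Omega}(u-g)^{2}\,dx+\alpha\|Du-w\|_{\mathcal{M}}+\beta\|Dw\|_{\mathcal{M}}$ of problem \eqref{P} with data $g$, substitute $u=\tilde u+\ell$ and $w=\tilde w+\lambda$; using $g=f+\ell$ yields $u-g=\tilde u-f$, whence $J_{g}(\tilde u+\ell,\tilde w+\lambda)=J_{f}(\tilde u,\tilde w)$. Since $(\tilde u,\tilde w)\mapsto(\tilde u+\ell,\tilde w+\lambda)$ is a bijection of $\mathrm{BV}(\Omega)^{2}$, $(\tilde u,\tilde w)$ minimises $J_{f}$ if and only if $(\tilde u+\ell,\tilde w+\lambda)$ minimises $J_{g}$. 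Taking $\tilde u=u_{f}$ and recalling that the $u$-component of a minimiser is unique, we conclude that $u_{f}$ is the solution for data $f$ precisely when $u_{g}=u_{f}+\ell$ is the solution for data $g$, which is the claim.

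There is essentially no hard step: the only point requiring a little care is that $w$ is not uniquely determined, so one argues at the level of pairs $(u,w)$ and then reads off the unique $u$-component. Alternatively, the same conclusion follows directly from the Optimality conditions proposition, since the dual certificate $v\in H_{0}^{2}(\Omega)$ is literally unchanged: $v''=f-u_{f}=g-u_{g}$, $Du_{g}-w_{g}=Du_{f}-w_{f}$, and $Dw_{g}=Dw_{f}$, so \eqref{C1}--\eqref{C3} hold for $(u_{g},w_{g},v)$ with data $g$ exactly when they hold for $(u_{f},w_{f},v)$ with data $f$.
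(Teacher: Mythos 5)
Your proposal is correct, and your primary argument takes a genuinely different route from the paper. The paper proves the equivalence by working at the level of the optimality system: it sets $v_{g}=v_{f}$ and $w_{g}=w_{f}+\lambda$ and checks that \eqref{C1}--\eqref{C3} hold for $(u_{g},w_{g},v_{g})$ with data $g$ exactly when they hold for $(u_{f},w_{f},v_{f})$ with data $f$ --- i.e.\ precisely the ``alternative'' you sketch in your closing paragraph. Your main argument instead substitutes $(\tilde u,\tilde w)\mapsto(\tilde u+\ell,\tilde w+\lambda)$ directly into the objective of \eqref{P}, observes that $g-\ell=f$, $D(\tilde u+\ell)-(\tilde w+\lambda)=D\tilde u-\tilde w$ and $D(\tilde w+\lambda)=D\tilde w$, and concludes from the resulting bijection between minimising pairs, reading off the (unique) $u$-component at the end. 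This is more elementary and self-contained: it needs only the representation \eqref{tgv1d_min} and no duality machinery, and it would apply verbatim to any data fidelity of the form $H(u-f)$, not just the $L^{2}$ one. The paper's route is shorter once the optimality conditions are in hand and has the virtue of exhibiting explicitly that the dual certificate $v$ is unchanged, which is what the subsequent sections actually exploit when transferring the computed solutions from $f$ to $g$. Your handling of the non-uniqueness of $w$ by arguing at the level of pairs is careful and correct (and in fact the uniqueness of the $u$-component is not even needed for the stated ``solution iff solution'' equivalence, since the bijection already maps minimising pairs to minimising pairs in both directions).
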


\begin{proof}
Suppose that $u_{f}$ is a solution of \eqref{P} with data $f$, for some combination of the parameters $\alpha$ and $\beta$ and let $v_{f}$, $w_{f}$ to be the corresponding dual variables. We will show that $u_{g}(x)=u_{f}(x)+\lambda(x-L)$ is a solution for data $g$ for the same combination of $\alpha$ and $\beta$ and vice versa. The optimality conditions \eqref{C1}, \eqref{C2}, \eqref{C3} read:
\begin{eqnarray*}
v_{f}''&=&f-u_{f}, \label{vf_cond1}\\
-v_{f}&\in& \alpha \,\mathrm{Sgn}(Du_{f}-w_{f}),\\
v_{f}&\in& \beta\,\mathrm{Sgn}(Dw_{f}).
\end{eqnarray*} 
Observing that we have $g(x)=f(x)+\lambda(x-L)$ we set $v_{g}=v_{f}$ and $w_{g}(x)=w_{f}(x)+\lambda$. Then we have
\begin{eqnarray*}
v_{g}''=v_{f}''
	   =f-u_{f}
	   =g-u_{g},
\end{eqnarray*}
\begin{eqnarray*}
-v'_{g}\in\alpha\, \mathrm{Sgn}(Du_{g}-w_{g}) \iff
-v'_{g}\in\alpha\,\mathrm{Sgn}(Du_{f}+\lambda -w_{f}-\lambda)\iff
-v'_{f}\in\alpha\,\mathrm{Sgn}(Du_{f}-w_{f}),
\end{eqnarray*}
 and also
\begin{eqnarray*}
v_{g}\in \beta\,\mathrm{Sgn}(Dw_{g}) \iff
v_{g}\in \beta \,\mathrm{Sgn}(Dw_{f}) \iff
v_{f} \in \beta\, \mathrm{Sgn}(Dw_{f}),
\end{eqnarray*}
thus \eqref{C1}, \eqref{C2}, \eqref{C3} hold for $u_{g}$, $v_{g}$ and $w_{g}$. Similarly, we show that if $u_{g}$ is a solution 
for data $g$ then $u_{f}(x):=u_{g}(x)-\lambda(x-L)$ is a solution for data $f$.

\end{proof}

\begin{figure}
\begin{center}
\subfloat[Solution of the type \newline\ref{NP1J}.]
{
\includegraphics[scale=0.27]{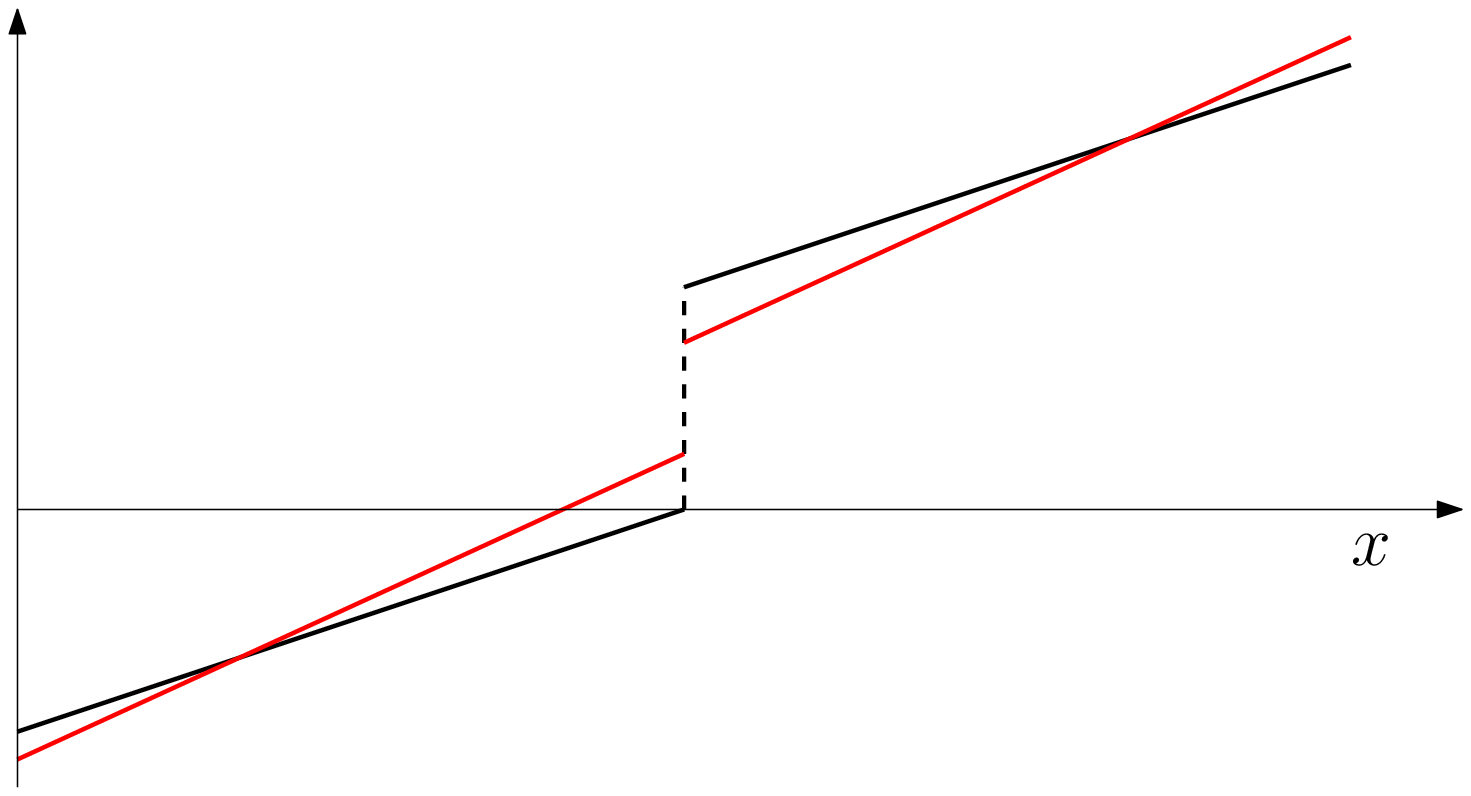}
}
\subfloat[Solution of the type \newline\ref{NP2J}.]{
\includegraphics[scale=0.27]{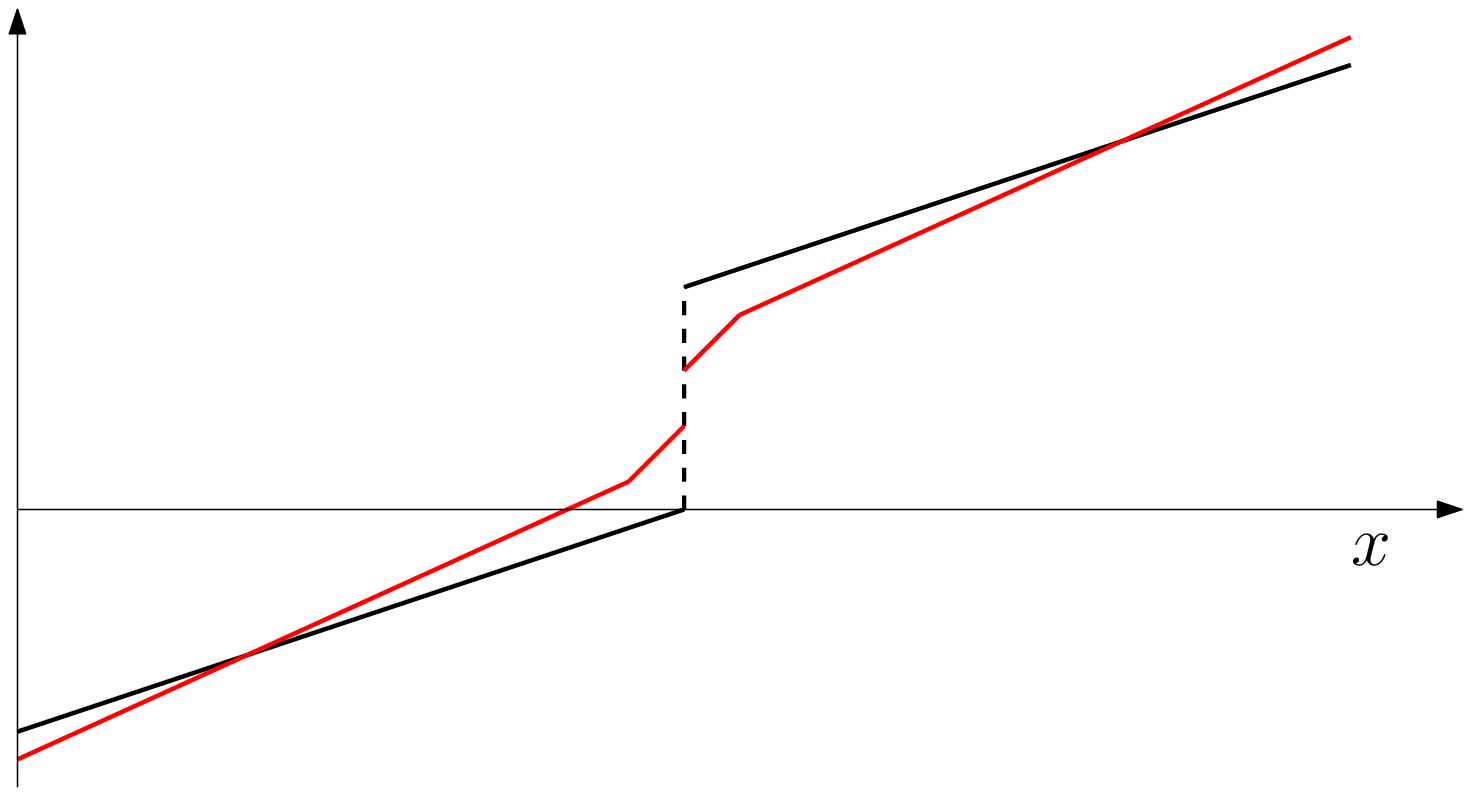}
}
\subfloat[Solution of the type \newline\ref{NP1C}.]{
\includegraphics[scale=0.27]{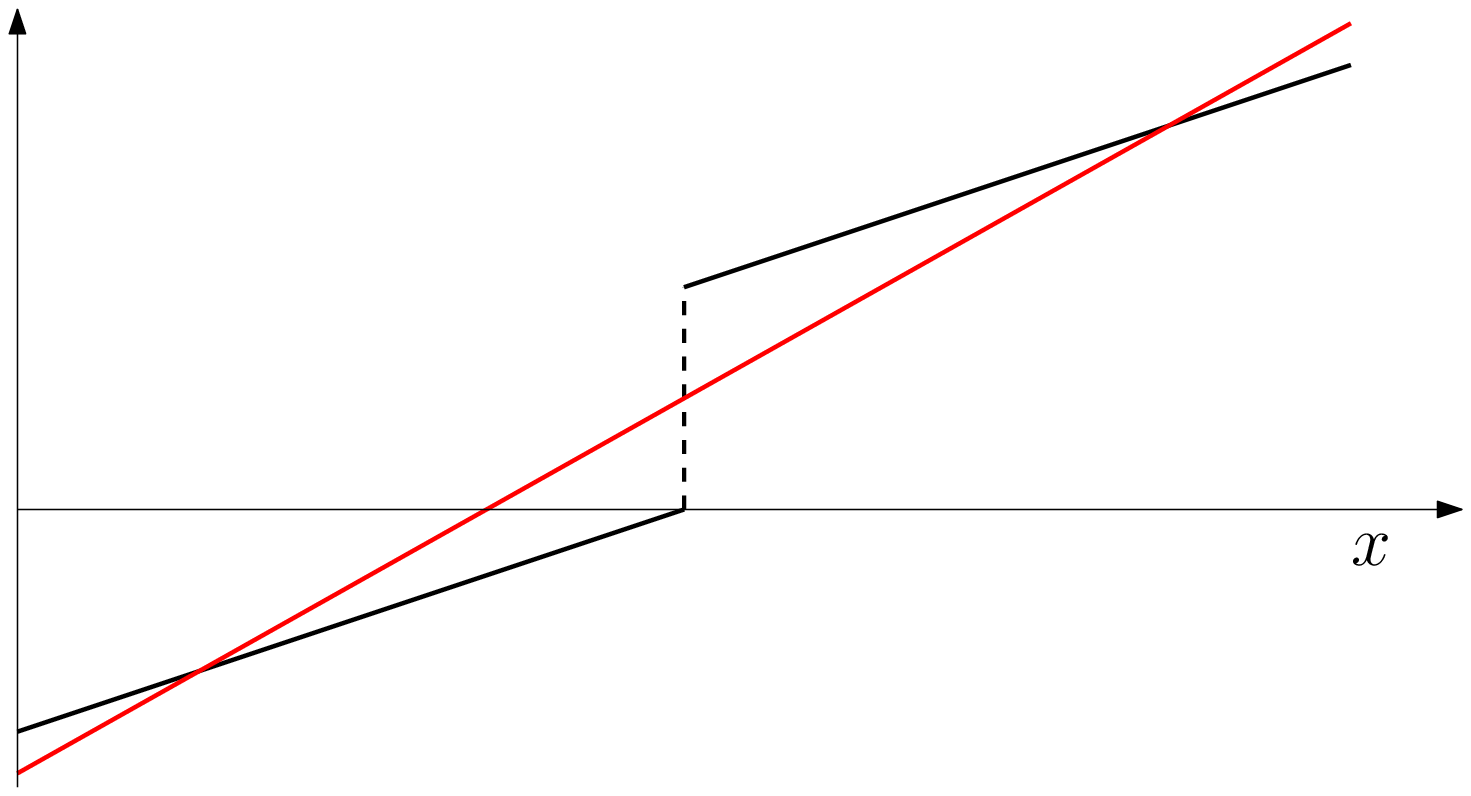}
}
\subfloat[Solution of the type \newline\ref{NP2C}.]{
\includegraphics[scale=0.27]{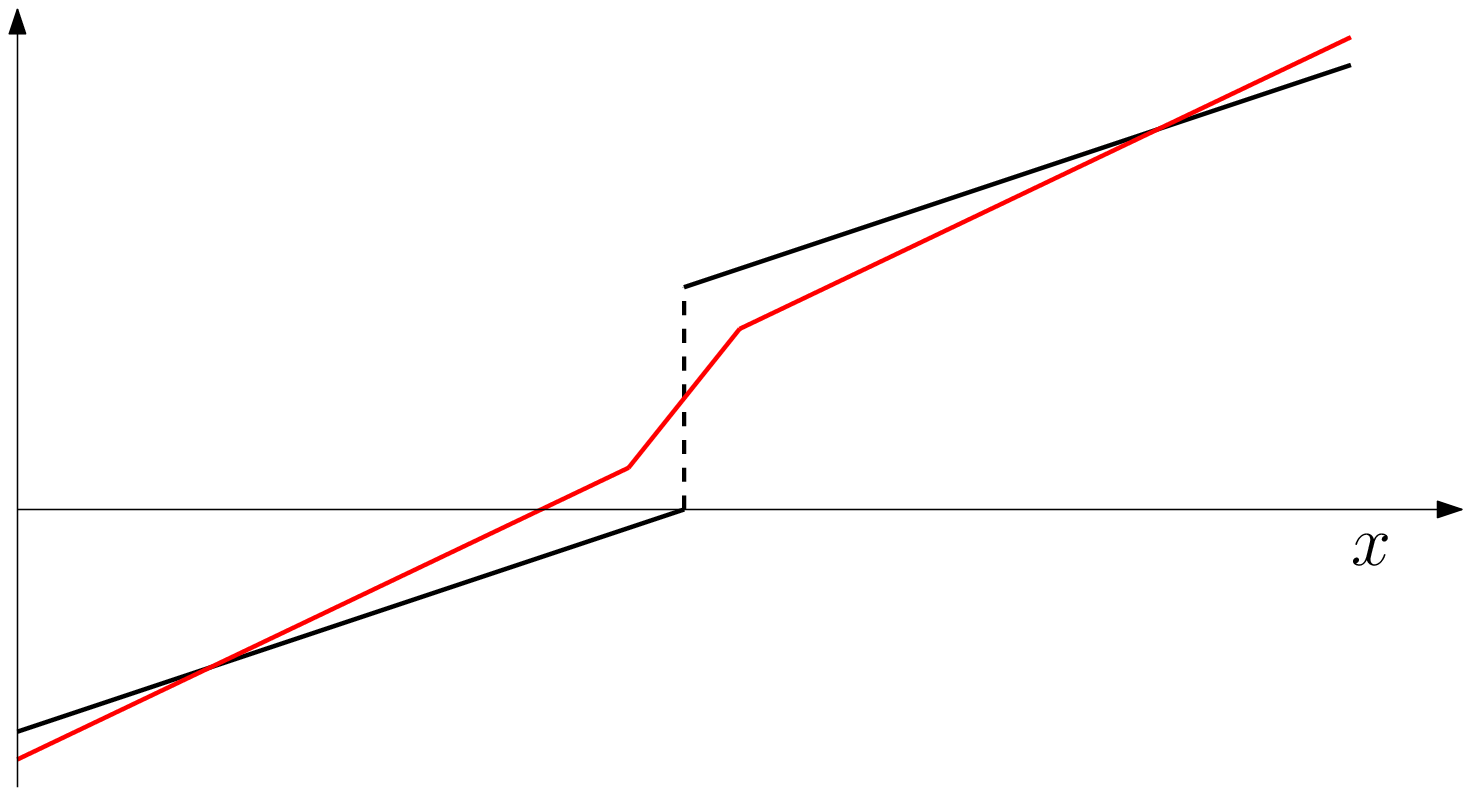}
}
\caption{All the possible types of solution $u$ to the minimisation problem \ref{P} for the data function $g$.}
\label{g_allowed}
\end{center}
\end{figure}

In Figure \ref{g_allowed} we show all such possible solutions. These solutions correspond to the same combinations of $\alpha$ and $\beta$ that are shown in Figure \ref{f_alpha_beta}. One can observe here the capability of TGV to preserve piecewise affine structures. This is in contrast to TV regularisation  which promotes piecewise constant reconstructions.

\subsection{Hat function}

\begin{figure}[h!]
\begin{center}
\includegraphics[scale=0.40]{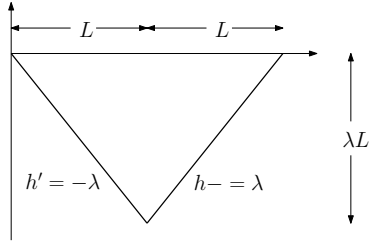}
\caption{The hat function h.}
\label{h_jump}
\end{center}
\end{figure}

In this section we are computing exact solutions for the hat function
\begin{equation}\label{definition_h}
h(x)=\lambda|x-L|-\lambda L, \quad x\in (0,2L),
\end{equation}
where $\lambda>0$, see Figure \ref{h_jump}. The study of this case gives an insight about how TGV deals with local extrema. Note again that the solutions will be symmetric, as $h$ is as well. Working similarly as we did for the function $f$, we conclude that the only possible types of solutions $u$ are the following: 
\begin{align}
\label{cec}\tag{C-E-C} & \text{$u$ is constant on $(0,x_{1})$, equal to $h$ on $(x_{1},x_{2})$ and constant on $(x_{1},L)$, where $(0<x_{1}<x_{2}<L)$,}\\
\label{aea}\tag{A-E-A} & \text{$u$ is affine on $(0,x_{1})$, equal to $h$ on $(x_{1},x_{2})$, and  affine on $(x_{1},L)$, where $(0<x_{1}<x_{2}<L)$,}\\
\label{c}\tag{C} & \text{$u$ is constant on $(0,L)$,}\\
\label{a}\tag{A} & \text{$u$ is affine on $(0,L)$,}
\end{align}

\begin{figure}
\begin{center}
\subfloat[Solution of the type \ref{cec}]
{
\hspace{-0.6cm}\includegraphics[scale=0.29]{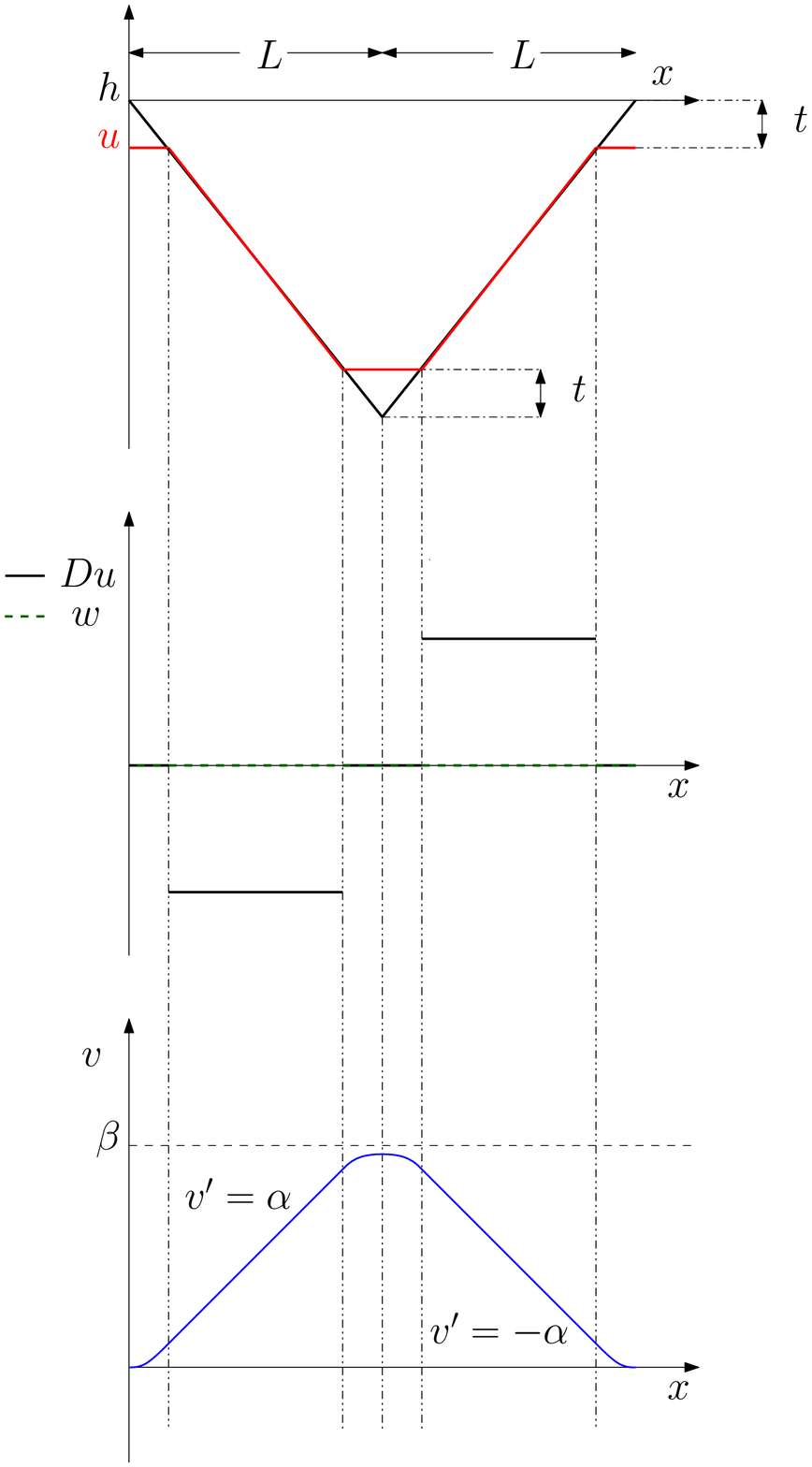}
}
\subfloat[Solution of the type \ref{aea}]{
\includegraphics[scale=0.29]{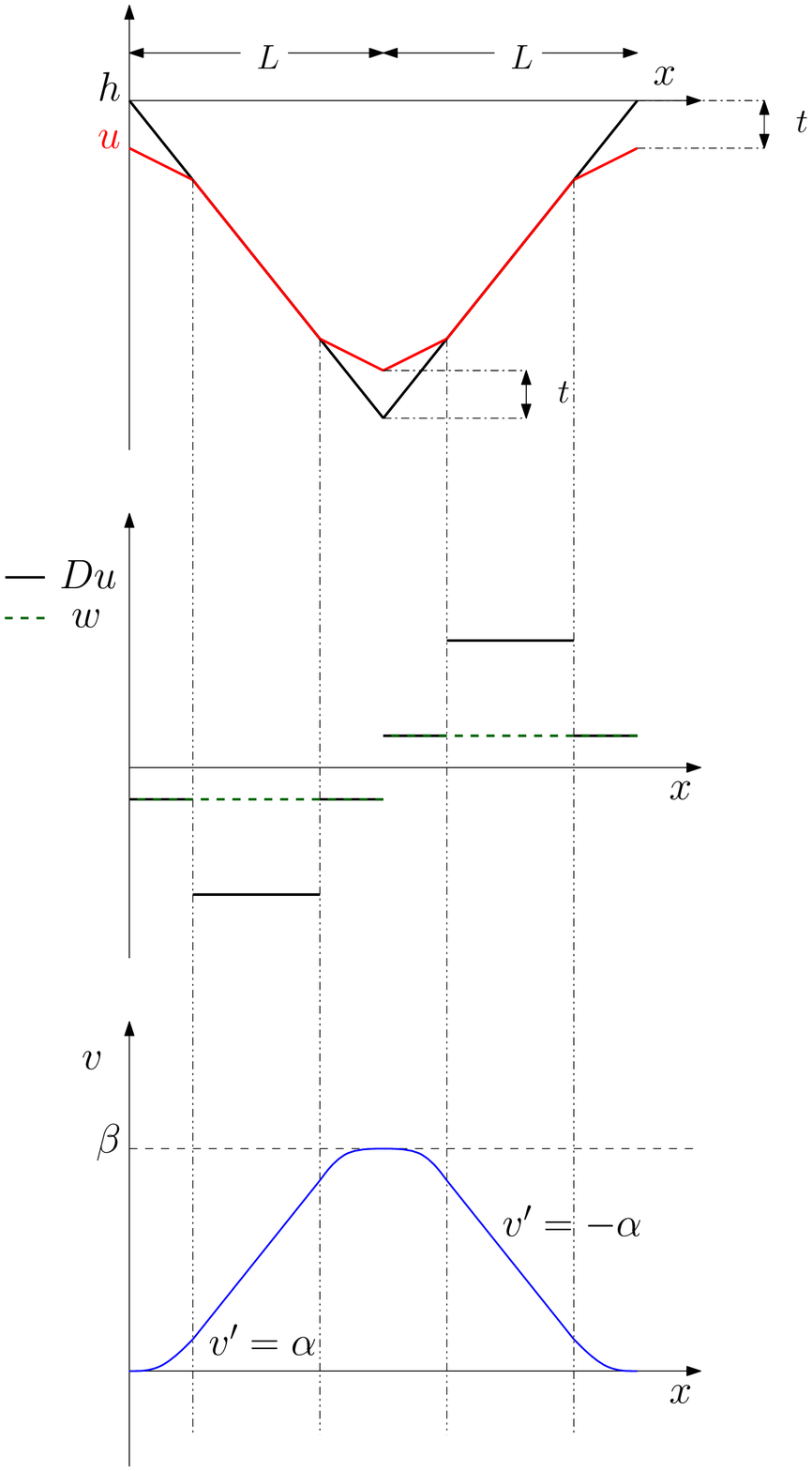}
}
\subfloat[Solution of the type \ref{c}]{
\includegraphics[scale=0.29]{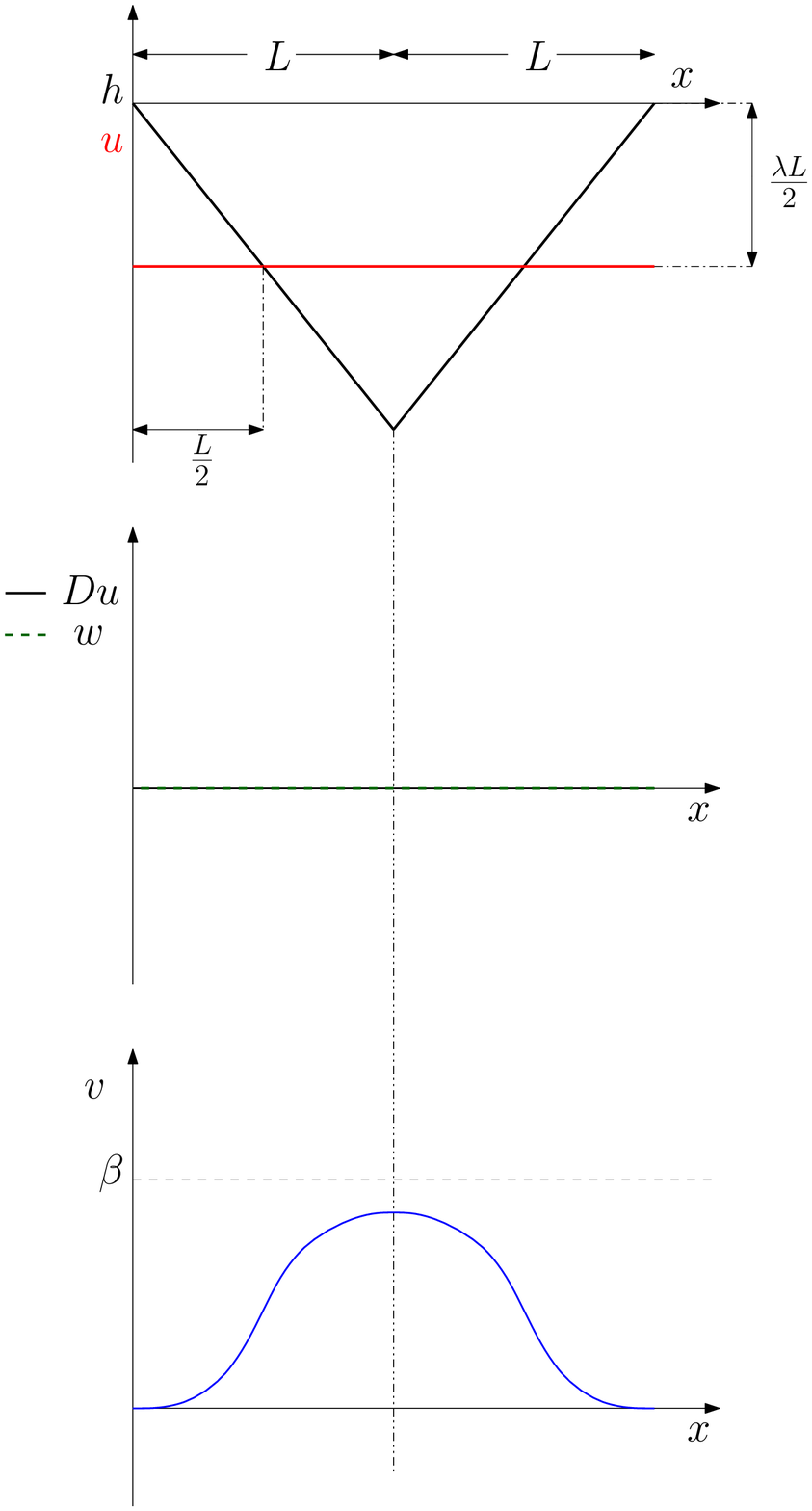}
}
\subfloat[Solution of the type \ref{a}]{
\includegraphics[scale=0.29]{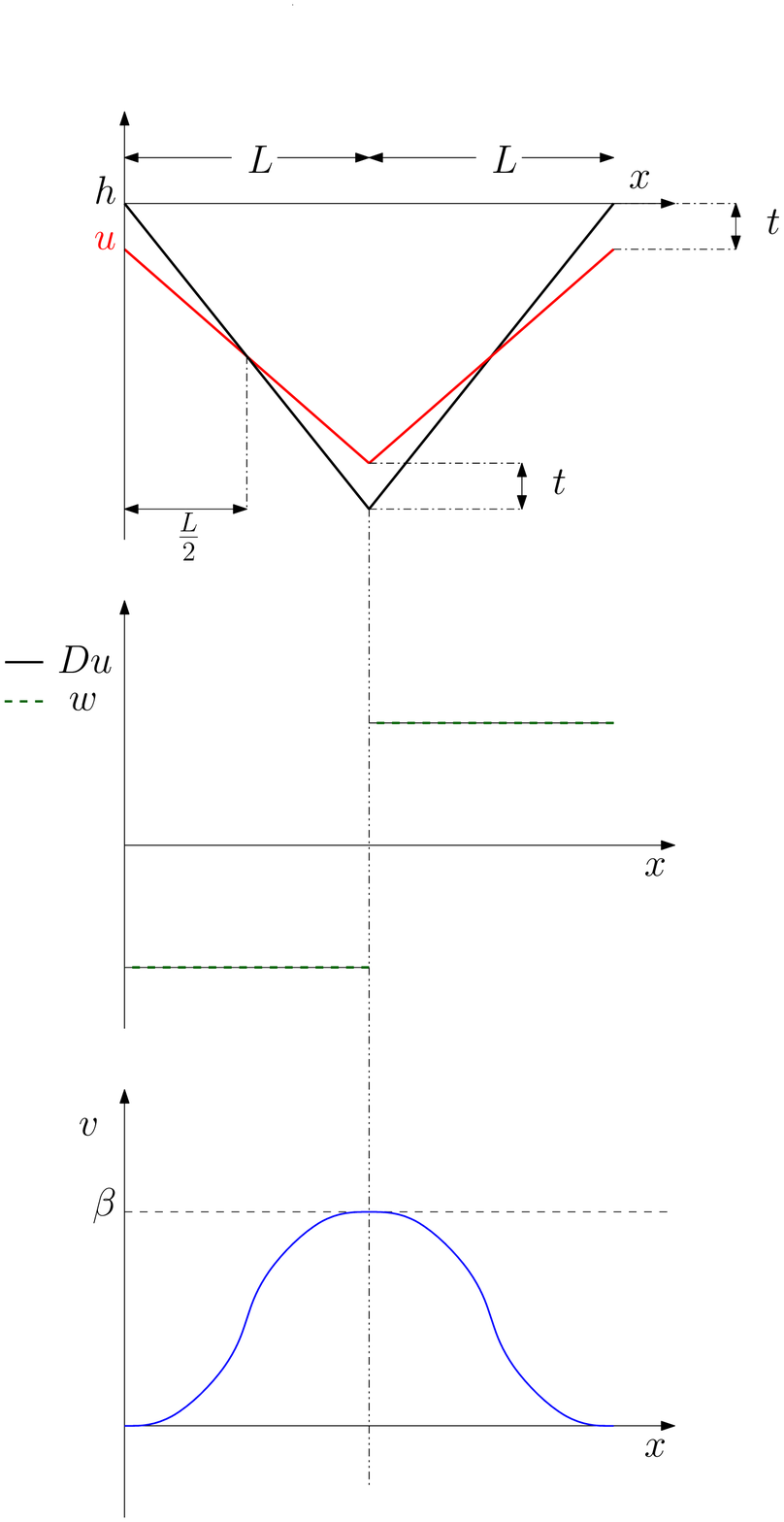}
}
\caption{All the possible types of solution $u$ to the minimisation problem \eqref{P} for the data function $h$ defined in \eqref{definition_h}. We also show the forms of the corresponding variables $w$ and $v$.}
\label{h_allowed}
\end{center}
\end{figure}

In Figure \ref{h_allowed} we show how these four types of solutions look like along with the corresponding variables $w$ and $v$. Notice that the solution of the type \ref{c} is the $L^{2}$--linear regression. Also, the solution of the type \ref{cec} corresponds to the TV--like solutions predicted by Theorem \ref{evenTV_label}. In the following, we investigate which combinations of the parameters $\alpha$ and $\beta$ correspond to each solution. Unlike the case with the piecewise constant function $f$, we are able to give necessary and sufficient conditions for every case.

\newtheorem{hat_main}[first]{Proposition}
\begin{hat_main}\label{hat_main_label}
Let $h$ be the hat function defined in \eqref{definition_h}. Then the solution $u$ of the problem \eqref{P} can be of the following type:
\begin{enumerate}[(1)]
\item The solution $u$ is of the type \emph{\textbf{\ref{cec}}}, Figure \ref{h_allowed}(a), if and only if
\begin{equation}\label{cec_cond}
\alpha<\frac{\lambda L^{2}}{8}\quad \text{ and }\quad \beta\ge \alpha L-\frac{2\alpha}{3}\sqrt{\frac{2\alpha}{\lambda}}.
\end{equation}
 In that case
\[u(x)=
\begin{cases}
-\sqrt{2\alpha\lambda} &  \text{if }\; x\in \left(0,\sqrt{\frac{2\alpha}{\lambda}}\right),\\
-\lambda x                   &  \text{if }\; x\in \left(\sqrt{\frac{2\alpha}{\lambda}},L-\sqrt{\frac{2\alpha}{\lambda}}\right),\\
-\lambda L+\sqrt{2\alpha\lambda} & \text{if }\; x\in \left(L-\sqrt{\frac{2\alpha}{\lambda}}),L\right).
\end{cases}
\]
\item  The solution $u$ is of the type \emph{\textbf{\ref{aea}}}, Figure \ref{h_allowed}(b), if and only if
\begin{equation}\label{aea_cond}
\beta>\frac{2L\alpha}{3}\quad \text{ and }\quad \beta< \alpha L-\frac{2\alpha}{3}\sqrt{\frac{2\alpha}{\lambda}}.
\end{equation}
 In that case
 \[u(x)=
\begin{cases}
-\mu x -\sqrt{2\alpha(\lambda-\mu)} &  \text{if }\; x\in \left(0,\frac{3}{2} \left(L-\frac{\beta}{\alpha}\right)\right),\\
-\lambda x                   & \text{if }\; x\in \left(\frac{3}{2} \left(L-\frac{\beta}{\alpha}\right),L-\frac{3}{2} \left(L-\frac{\beta}{\alpha}\right)\right),\\
-\mu x -(\lambda-\mu)L+2\sqrt{2\alpha(\lambda-\mu)} & \text{if }\; x\in \left(L-\frac{3}{2} \left(L-\frac{\beta}{\alpha}\right),L\right), 
\end{cases}
\;\text{ with }\; \mu=\lambda-\frac{8\alpha}{9\left(L-\frac{\beta}{\alpha}\right)^{2}}.
\]
\item The solution $u$ is of the type \emph{\textbf{\ref{c}}}, Figure \ref{h_allowed}(c), if and only if
\begin{equation}\label{c_cond}
\beta\ge \frac{\lambda L^{3}}{12} \quad \text{ and }\quad \alpha\ge \frac{\lambda L^{2}}{8}.
\end{equation}
 In that case
 \[u(x)=\frac{\lambda L}{2},\quad x\in (0,2L).\]
\item The solution $u$ is of the type \emph{\textbf{\ref{a}}}, Figure \ref{h_allowed}(d), if and only if
\begin{equation}\label{a_cond}
\beta<\frac{\lambda L^{3}}{12} \quad \text{ and }\quad \beta \le \frac{2L\alpha}{3}.
\end{equation}
 In that case
\[ u(x)=\mu|x-L|-(\lambda L-t), \quad \text{ with }\quad\mu=\lambda-\frac{12\beta}{L^{3}} \quad\text{ and }\quad t=\frac{6\beta}{L^{2}}.\]
\end{enumerate}
\end{hat_main}

\begin{proof}
(1) Since we are looking at solutions of the type \ref{cec}, we must find $0<x_{1}<x_{2}<L$ such that $u=t$ for a constant $t$ on $(0,x_{1})$, $u=h$ on $(x_{1},x_{2})$ and $u=-t-\lambda (x_{2}-x_{1})$ on $(x_{2},L)$. As far as the variable $w$ is concerned, we have that $w=Du=0$ on $(0,x_{1})\cup (x_{2},L)$. However since $u=f$ on $(x_{1},x_{2})$, we have that $v$ is affine in that interval, thus it cannot have an extremum there. Thus, condition \eqref{C3} forces $w=0$ on $(x_{1},x_{2})$ as well and \eqref{C2} forces $v'=\alpha$ there.
We conclude that the solution $u$ will be of the type \ref{cec} if and only if we can find  $0<x_{1}<x_{2}<L$ and a function $v$ with the following properties:
\begin{align}
\label{cec_v_cts} \text{$v$ and $v'$ are continuous},\qquad & \text{($v\in H_{0}^{2}$)},\\
\label{cec_v_start}\text{$v$ is a cubic polynomial on $(0,x_{1})$ and $(x_{2},L)$}, \qquad& \text{(condition \eqref{C1})},\\
\label{cec_v_bc} v(0)=0,\; v'(0)=0, \qquad& \text{(boundary conditions for $v$)},\\
\label{cec_v_middle}  \text{$v$ is affine and $v'=\alpha$ on $(x_{1},x_{2})$}, \qquad & (\text{as explained above}),\\
\label{cec_v_even} v'(L)=0,\qquad & \text{($v$ is an even function)},\\
\label{cec_v_b}|v|\le \beta,\qquad & \text{(condition \eqref{C3})},\\
\label{cec_v_a} |v'|\le \alpha,\qquad & \text{(condition \eqref{C2})}.
\end{align}
After some computations we find that
\[x_{1}=L-x_{2}=\sqrt{\frac{2\alpha}{\lambda}},\quad t=\sqrt{2\alpha\lambda},\]
\[v(x)=\begin{cases}
-\frac{\lambda}{6}x^{3}+ \sqrt{\frac{\alpha\lambda}{2}}x^{2}&  \text{if }\; x\in \left(0,\sqrt{\frac{2\alpha}{\lambda}}\right),\\
\alpha(x-x_{1})+\frac{\lambda x_{1}^{2}}{3}&  \text{if }\; x\in \left (\sqrt{\frac{2\alpha}{\lambda}},L-\sqrt{\frac{2\alpha}{\lambda}} \right ),\\
-\frac{\lambda}{6}(x+x_{1}-L)^{3}+\alpha(x+x_{1}-L)+\alpha (L-2x_{1})+\frac{\lambda x_{1}^{3}}{3} &  \text{if }\; x\in \left (L-\sqrt{\frac{2\alpha}{\lambda}},L\right).
\end{cases}
\]
From the equation $v''=h-u$ we can get an expression for $u$.
Moreover $x_{1}<x_{2}$ holds if and only if 
\[\alpha<\frac{\lambda L^{2}}{8}.\]
Finally one catch check that \eqref{cec_v_a} holds  and since $v$ is increasing on $(0,L)$, in order to satisfy \eqref{cec_v_b}, it suffices to have $v(L)\le \beta$ something that translates to
\[\beta\ge \alpha L-\frac{2\alpha}{3}\sqrt{\frac{2\alpha}{\lambda}}.\]

(2) The proof follows essentially the proof of $(1)$. Here we are looking for solutions of the type $u=-\mu x -t$ instead of $u=t$ on $(0,x_{1})$. In addition to the conditions \eqref{cec_v_cts}--\eqref{cec_v_a} for $v$ here we also have
\begin{equation}\label{aea_v_b}
v(L)=\beta,\qquad \text{(condition \eqref{C3})},
\end{equation}
because $w$ makes a positive jump at $x=L$, see also Figure \ref{h_allowed}(b). Again after some computations we find
\[x_{1}=L-x_{2}=\sqrt{\frac{2\alpha}{\lambda-\mu}},\quad t=\sqrt{2\alpha(\lambda-\mu)},\]
\[v(x)=\begin{cases}
-\frac{\lambda-\mu}{6}x^{3}+ \sqrt{\frac{\alpha(\lambda-\mu)}{2}}x^{2}&  \text{if }\; x\in \left(0,\sqrt{\frac{2\alpha}{\lambda-\mu}}\right),\\
\alpha(x-x_{1})+\frac{(\lambda-\mu)x_{1}^{2}}{3}&  \text{if }\; x\in \left (\sqrt{\frac{2\alpha}{\lambda-\mu}},L-\sqrt{\frac{2\alpha}{\lambda-\mu}} \right ),\\
-\frac{\lambda-\mu}{6}(x+x_{1}-L)^{3}+\alpha(x+x_{1}-L)+\alpha (L-2x_{1})+\frac{(\lambda-\mu) x_{1}^{3}}{3} &  \text{if }\; x\in \left (L-\sqrt{\frac{2\alpha}{\lambda-\mu}},L\right).
\end{cases}
\]
Again one can check that $|v'|<\alpha$. Moreover the condition $v(L)=\beta$ gives
\begin{equation}\label{aea_v_L}
\alpha L-\frac{2\alpha}{3}\sqrt{\frac{2\alpha}{\lambda--\mu}}=\beta \iff \mu=\lambda-\frac{8\alpha}{9\left (L-\frac{\beta}{\alpha} \right)^{2}}.
\end{equation}
Since we are looking into cases where $0<\mu<\lambda$ we must have $\sqrt{\frac{2\alpha}{\lambda-\mu}}>\sqrt{\frac{2\alpha}{\lambda}}$
and using \eqref{aea_v_L} this translates to
\begin{equation}\label{aea_condition1}
\beta<\alpha L-\frac{2\alpha}{3}\sqrt{\frac{2\alpha}{\lambda}}.
\end{equation}
Finally, it is easily checked that in order to impose $x_{1}<x_{2}$, we must have
\begin{equation}\label{aea_condition1}
\beta>\frac{2L\alpha}{3}.
\end{equation}

(3) In this case, we are looking for solutions of the type $u=-t$, $t>0$. The function $v$ will be a cubic polynomial that satisfy the conditions:
\[v(0)=0,\;v'(0)=0,\; v'(L)=0,\; v''=h+t,\;|v|\le \beta, \; |v'|\le \alpha.\]
and we easily compute
\[t=\frac{\lambda L}{2},\quad v(x)=-\frac{\lambda}{6}x^{3}+\frac{1}{2}t x^{2},\quad x\in(0,L).\]
We can also check that the conditions $|v|\le \beta$, $|v'|\le \alpha$ are equivalent to
\[\beta\ge \frac{\lambda L^{3}}{12},\quad \alpha\ge \frac{\lambda L^{2}}{8},\]
respectively.

(4) The proof is similar to (3). We are looking for a solution of the type $u(x)=-\mu x-t$, $x\in (0,L)$, $0<\mu<\lambda$ and $t>0$. As before, the function $v$ will be a cubic polynomial satisfying the conditions:
\[v(0)=0,\;v'(0)=0,\; v(L)=\beta,\; v'(L)=0,\; v''=h+t,\;|v|\le \beta, \; |v'|\le \alpha.\]
We get that
\[v(x)=-\frac{\lambda-\mu}{6}x^{3}+\frac{t}{2}x^{2},\quad x\in(0,L), \quad \text{with}\quad t=\frac{6\beta}{L^{2}},\quad \mu=\lambda-\frac{12\beta}{L^{3}}.\]
Thus $0<\mu<\lambda$ is equivalent to 
\[\beta<\frac{\lambda L^{3}}{12}.\]
Finally, we check easily that $|v|\le \beta$ holds and  $|v'|\le \alpha$ is equivalent to
\[\beta\le \frac{2L \alpha}{3}.\]
\end{proof}

\begin{figure}[h!]
\begin{center}
\includegraphics[scale=0.45]{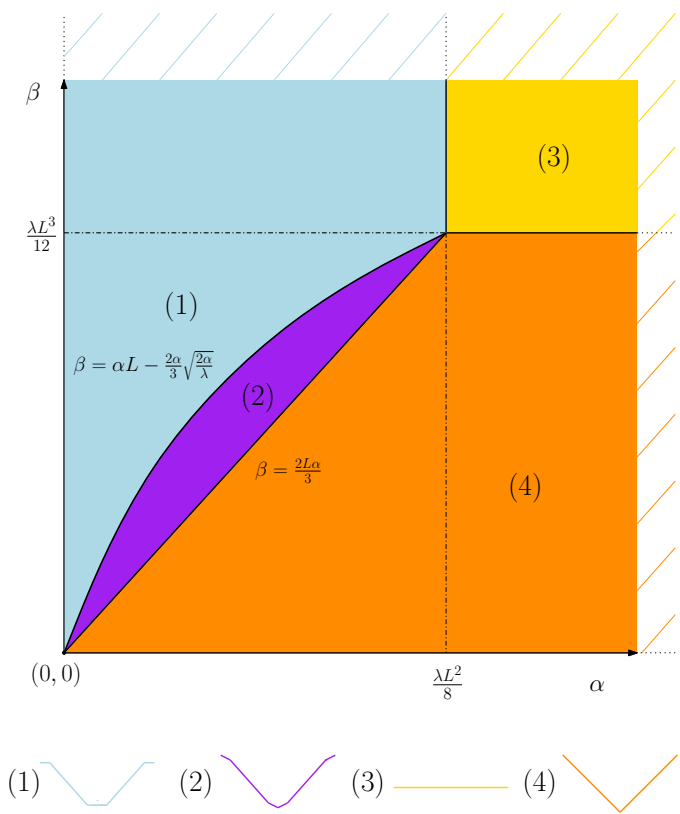}
\caption{Illustration of the four different types of solutions of \eqref{P} for the hat function $h$ that result for different combinations of the parameters $\alpha$ and $\beta$.}
\label{h_alpha_beta}
\end{center}
\end{figure}

As we did for the case of function $f$ we summarise how the solutions of the problem \eqref{P} with the data function $h$ are affected by $\alpha$ and $\beta$. In Figure \ref{h_alpha_beta} we have partitioned again  the set $\{\alpha>0,\;\beta>0\}$ into different areas that correspond to the four different possible solutions. We note again that in contrast to the cases of the functions $f$ and $g$, here we provide necessary and sufficient conditions for all the four different types of solutions. The corresponding areas are:
\begin{enumerate}[(1)]
\item Blue colour: $\left\{\alpha<\frac{\lambda L^{2}}{8},\; \beta\ge \alpha L -\frac{2\alpha}{3}\sqrt{\frac{2\alpha}{\lambda}}\right\}$,  \textbf{\ref{cec}}, see Figure \ref{h_allowed}(a).
\item Purple colour: $\left\{\beta>\frac{2L \alpha}{3},\;\beta<\alpha L -\frac{2\alpha}{3}\sqrt{\frac{2\alpha}{\lambda}}\right\}$, \textbf{\ref{aea}}, see Figure \ref{h_allowed}(b).
\item Yellow colour: $\left\{\alpha\ge \frac{\lambda L^{2}}{8},\; \beta\ge \frac{\lambda L^{3}}{12}\right\}$, \textbf{\ref{c}}, see Figure \ref{h_allowed}(c).
\item Orange colour: $\left\{\beta< \frac{\lambda L^{3}}{12},\; \beta\le\frac{2L \alpha}{3}\right\}$, \textbf{\ref{a}}, see Figure \ref{h_allowed}(d).
\end{enumerate}

\section{Numerical experiments}\label{section:numerics}
In this final section, we compare our theoretical results with numerical ones obtained by solving the discrete version of \eqref{P} with the primal-dual algorithm of Chambolle-Pock, \cite{chambolle2011first}. A description of the algorithm for TGV minimisation can be found in \cite{tgvcolour}. We also compute some numerical results with the presence of Gaussian noise that show that TGV regularisation is quite robust in the presence of noise. Let us note here that even though some sensitivity analysis can be done \cite{BredValk}, it is not an easy task to prove that the corresponding solutions of \eqref{P} with clean and corrupted data   have the same structure provided the noise is sufficiently small. Some relevant work has been done in \cite{benning2012ground} in terms of ground states of regularisation functionals.

In Figure \ref{f_aja_label}, we plot the exact and the numerical solutions of \eqref{P} using the function $f$ as a data function. We chose the parameters $\alpha$ and $\beta$ so that we have the solution of the type \ref{NP1J}. We observe that for clean data, the exact and the numerical solutions coincide, see Figure \ref{f_aja_label}(a). Moreover, even with the presence of noise, the numerical solution is not far away from the corresponding solution without the noise,  Figure \ref{f_aja_label}(b).

We observe similar results in Figures \ref{f_pajpa_label} and \ref{f_a_label} where we chose the parameters so that we have the solution of the type \ref{NP2J} and \ref{NP1C} respectively.

In Figure \ref{h_cec_label} we use the hat function $h$ as a data function. Again, without the presence of noise, the exact solution agrees with the numerical one, Figure \ref{h_cec_label}(a). However, when noise is added, even though the numerical solution is close to the  one that corresponds to the clean data, some staircasing is observed, Figure \ref{h_cec_label}(b). This is not surprising as with these combinations of $\alpha$ and $\beta$, TGV behaves like TV as it was shown in Theorem \ref{evenTV_label}. 

In Figure \ref{h_aea_label} the parameters are chosen so that the solution of the type \ref{aea} occurs. In the clean data case we have agreement between the exact and the numerical solution, Figure \ref{h_aea_label}(a), but in the noisy case a kind of ``affine'' staircasing effect appears in the area where the exact solution equals with the data, see detail of Figure \ref{h_aea_label}(b).

Finally, in Figure \ref{h_a_label}, the parameters are chosen so the solution of the type \ref{a} occurs. The numerical solution agrees with the exact one and deviates from it slightly in the presence of noise.

\begin{figure}[h!]
\begin{center}
\subfloat[Clean data, the numerical solution agrees with the exact one.]{
\hspace{-0.3 cm}\includegraphics[scale=0.35]{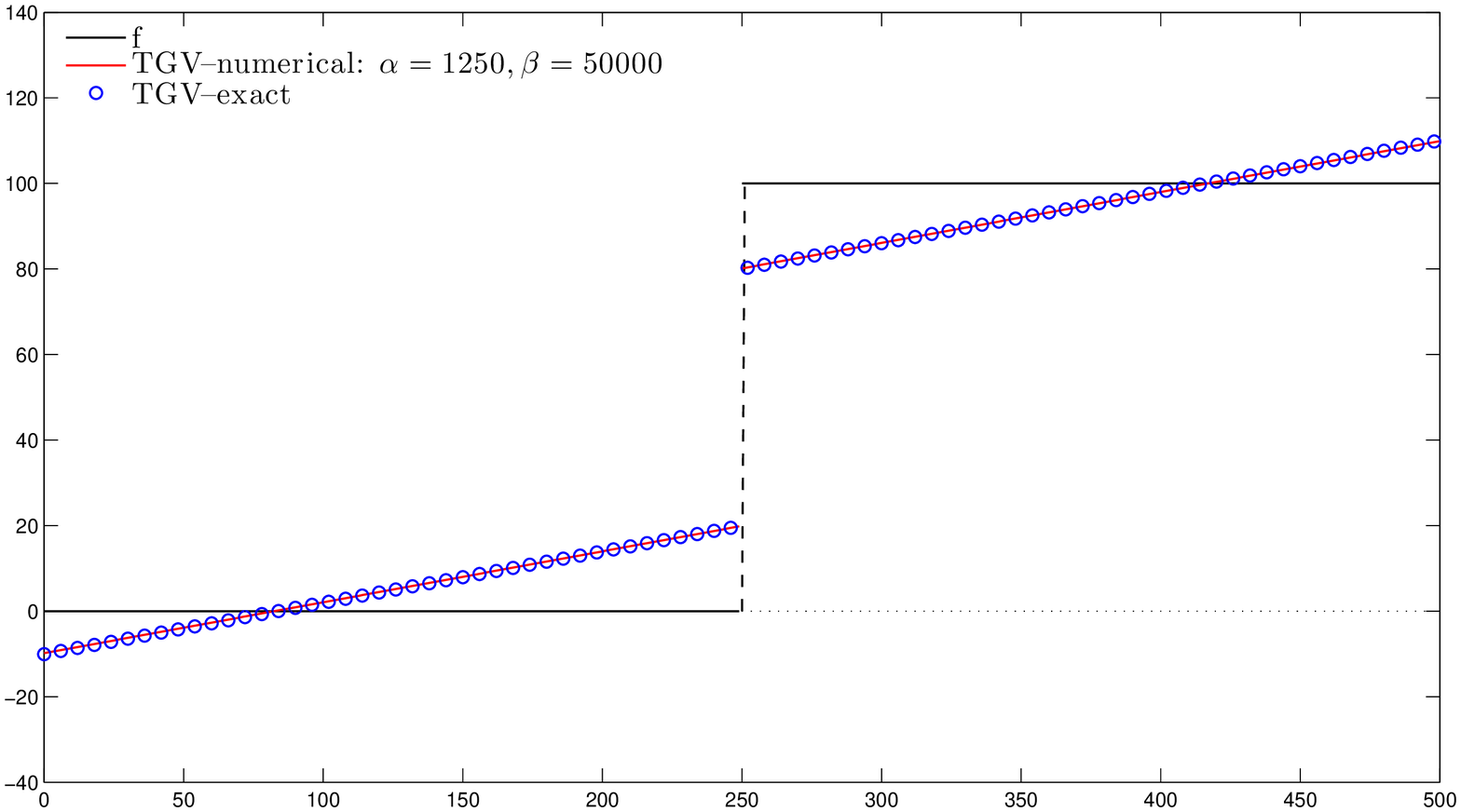}
}
\subfloat[Noisy data, the numerical solution deviates slightly from the corresponding exact solution with clean data.]{
\includegraphics[scale=0.35]{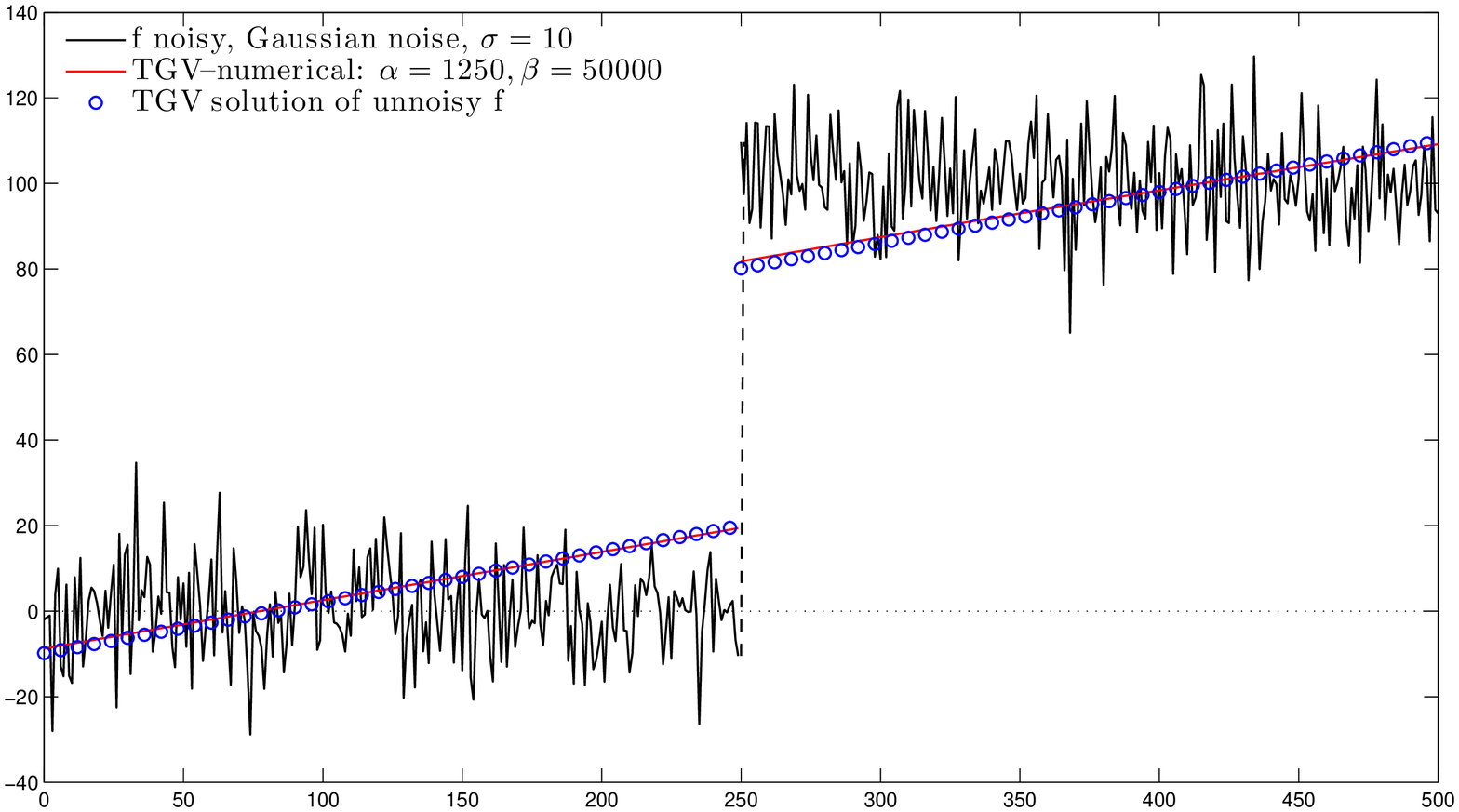}
}
\caption{Piecewise constant function $f$. The parameters $\alpha$ and $\beta$ satisfy the conditions \eqref{np1j_conditions}, thus the solution is of the type \ref{NP1J}. }
\label{f_aja_label}
\end{center}
\end{figure}

\begin{figure}[h!]
\begin{center}
\subfloat[Clean data, the numerical solution agrees with the exact one.]{
\hspace{-0.3 cm}\includegraphics[scale=0.35]{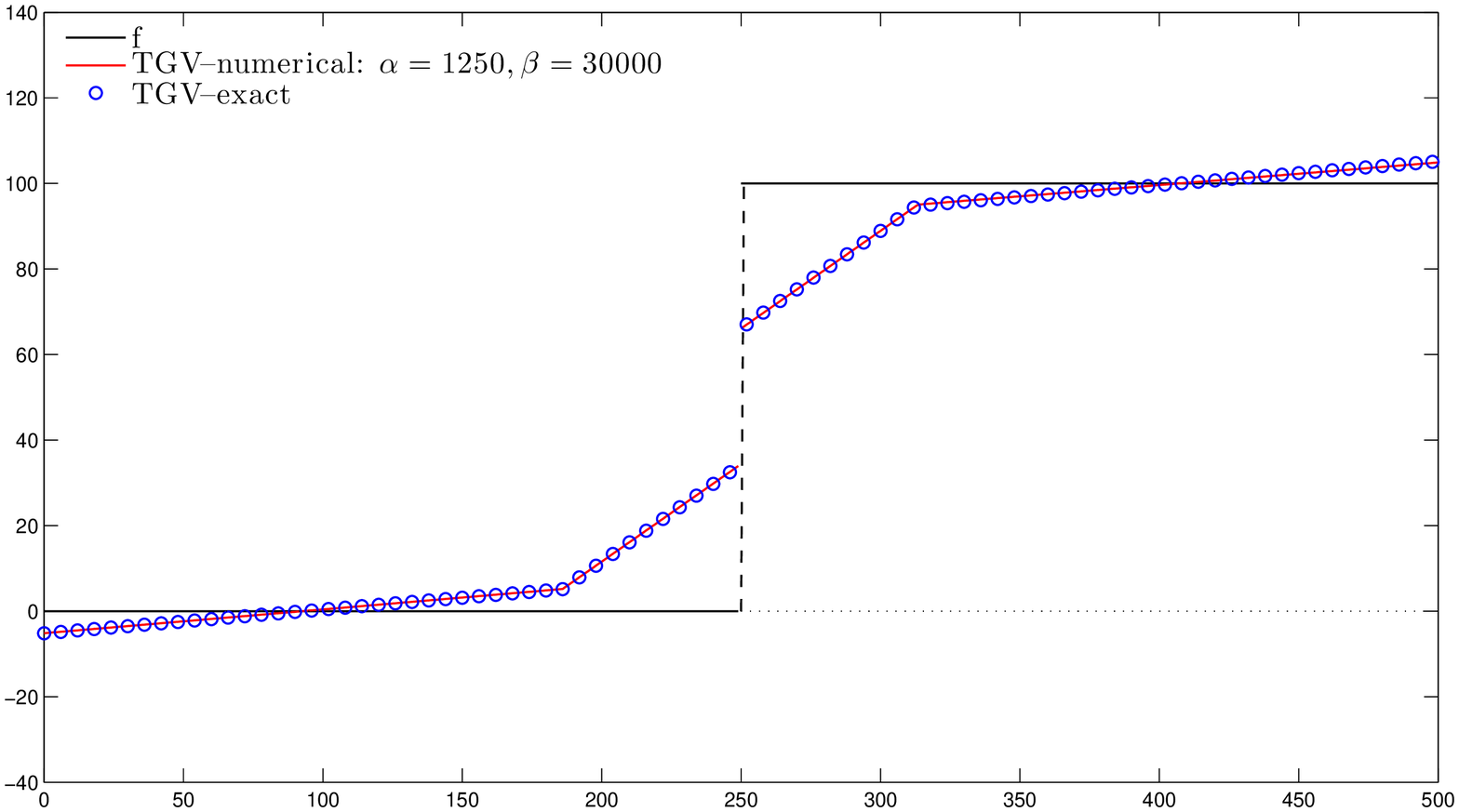}
}
\subfloat[Noisy data, the numerical solution deviates slightly from the corresponding exact solution with clean data.]{
\includegraphics[scale=0.35]{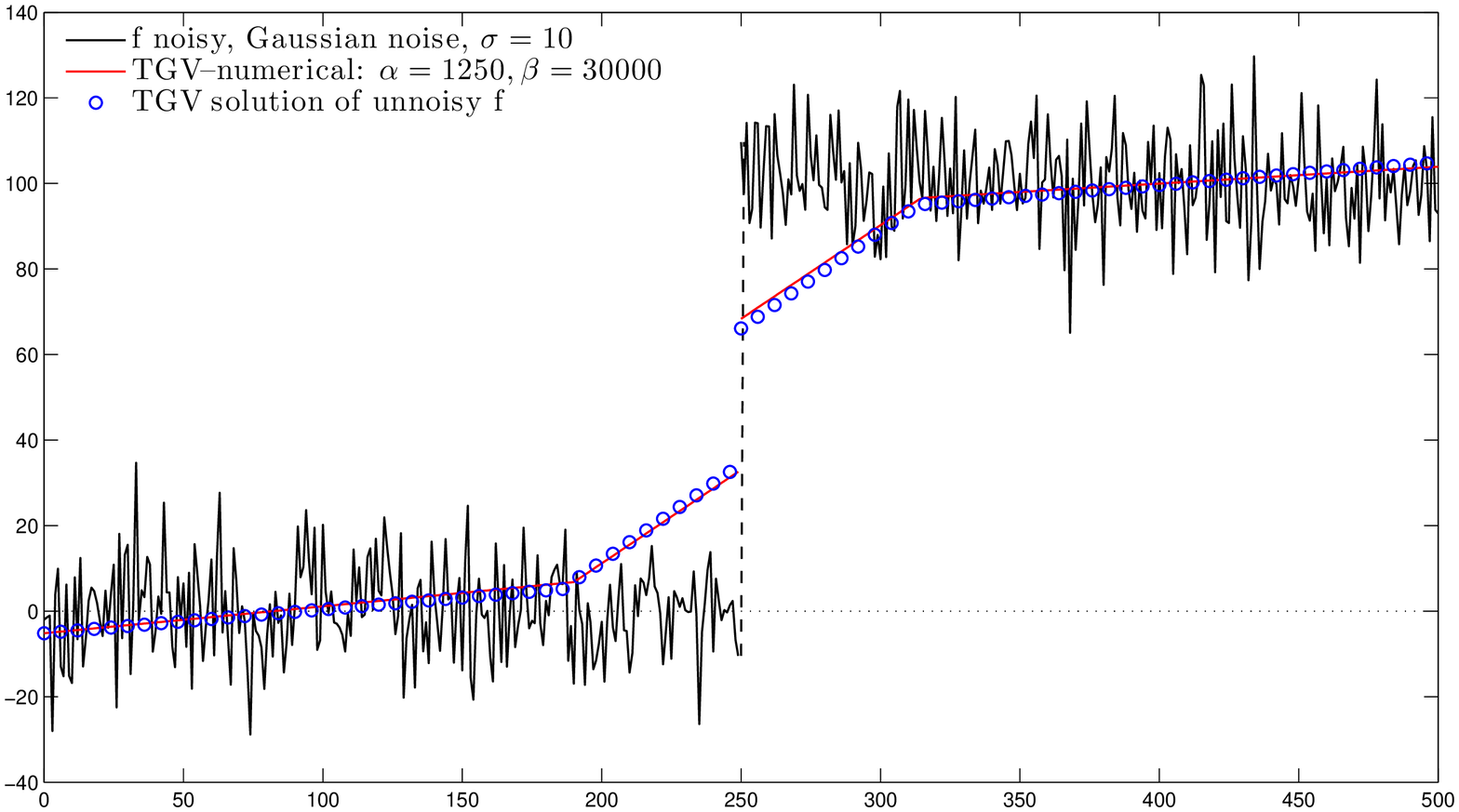}
}
\caption{Piecewise constant function $f$. The parameters $\alpha$ and $\beta$ satisfy the conditions \eqref{np2j_conditions}, thus the solution is of the type \ref{NP2J}.}
\label{f_pajpa_label}
\end{center}
\end{figure}

\begin{figure}[h!]
\begin{center}
\subfloat[Clean data, the numerical solution agrees with the exact one.]{
\hspace{-0.3 cm}\includegraphics[scale=0.35]{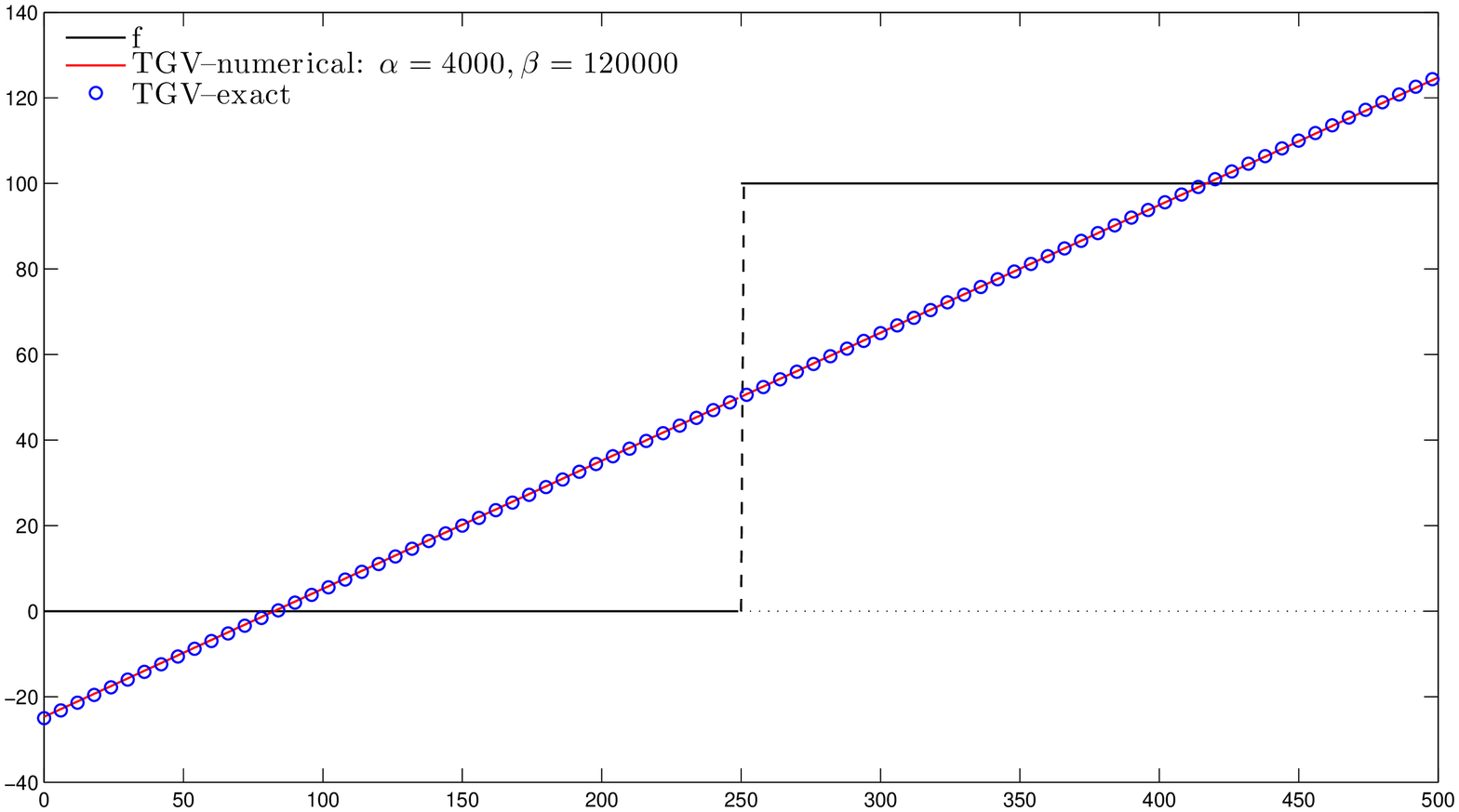}
}
\subfloat[Noisy data, the numerical solution deviates slightly from the corresponding exact solution with clean data.]{
\includegraphics[scale=0.35]{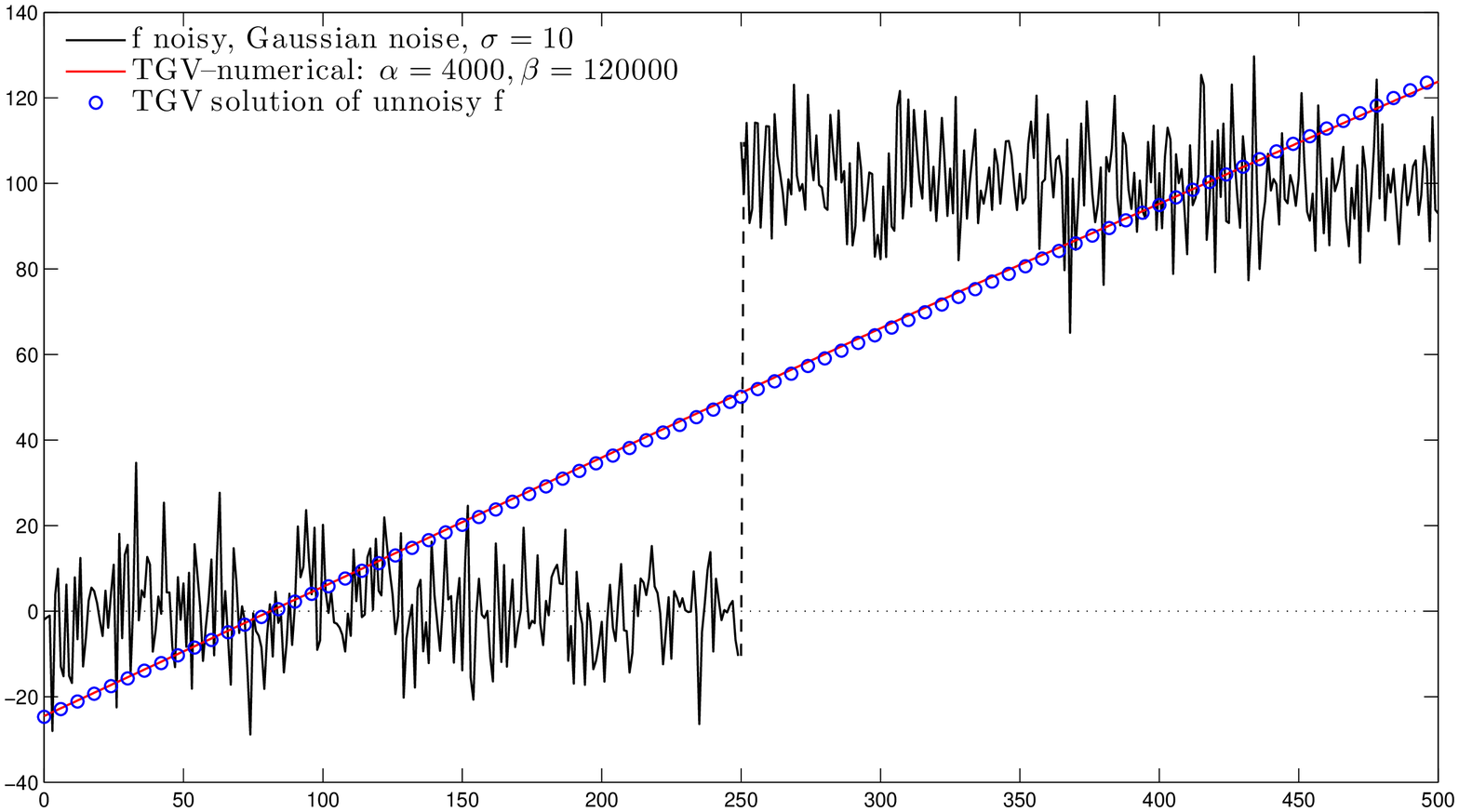}
}
\caption{Piecewise constant function $f$. The parameters $\alpha$ and $\beta$ satisfy the conditions \eqref{np1c_conditions}, thus the solution is of the type \ref{NP1C} ($L^{2}$--linear regression).}
\label{f_a_label}
\end{center}
\end{figure}

\begin{figure}[h!]
\begin{center}
\subfloat[Clean data, the numerical solution agrees with the exact one.]{
\hspace{-0.3 cm}\includegraphics[scale=0.45]{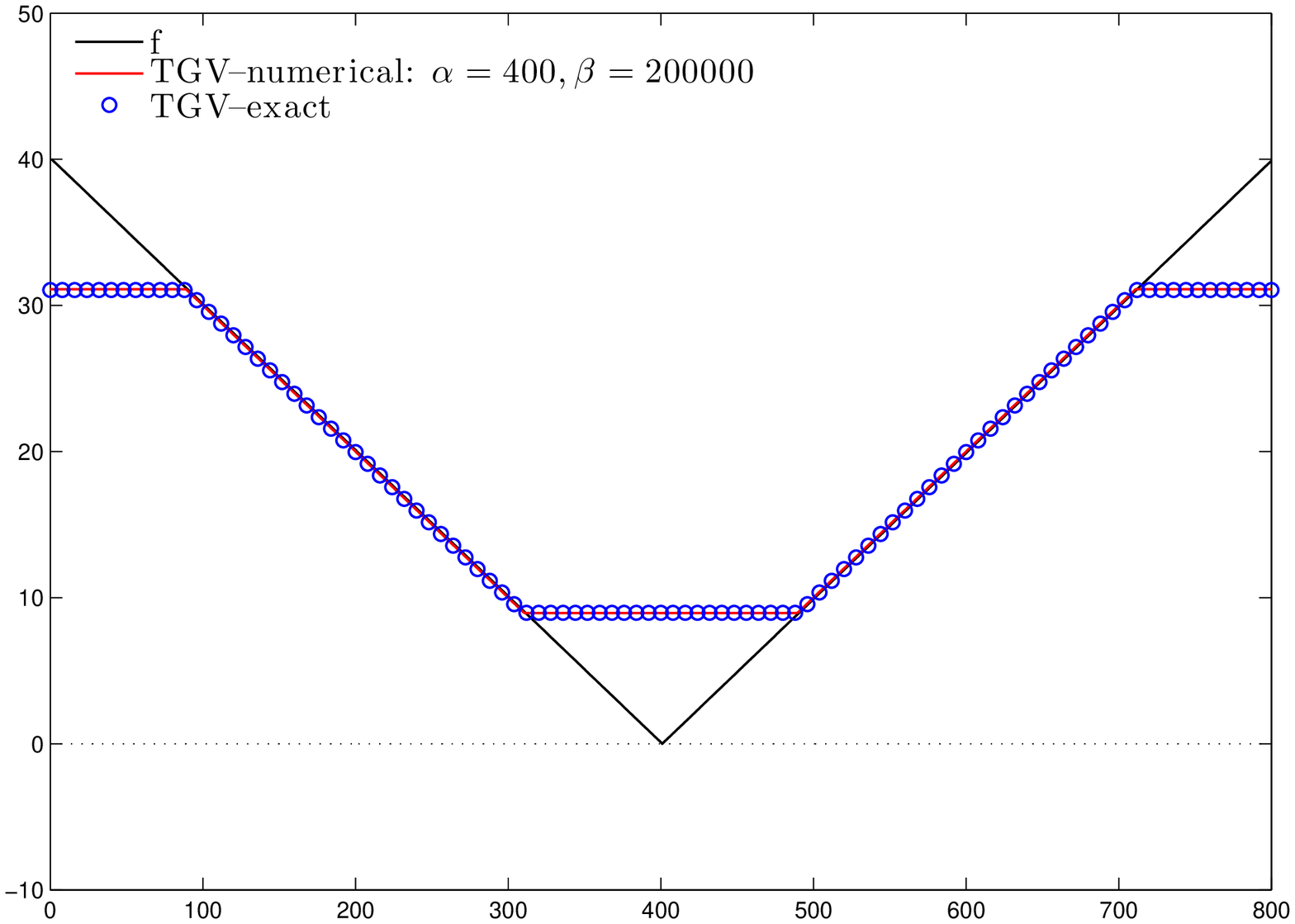}
}
\subfloat[Noisy data, appearance of staircasing effect.]{
\includegraphics[scale=0.45]{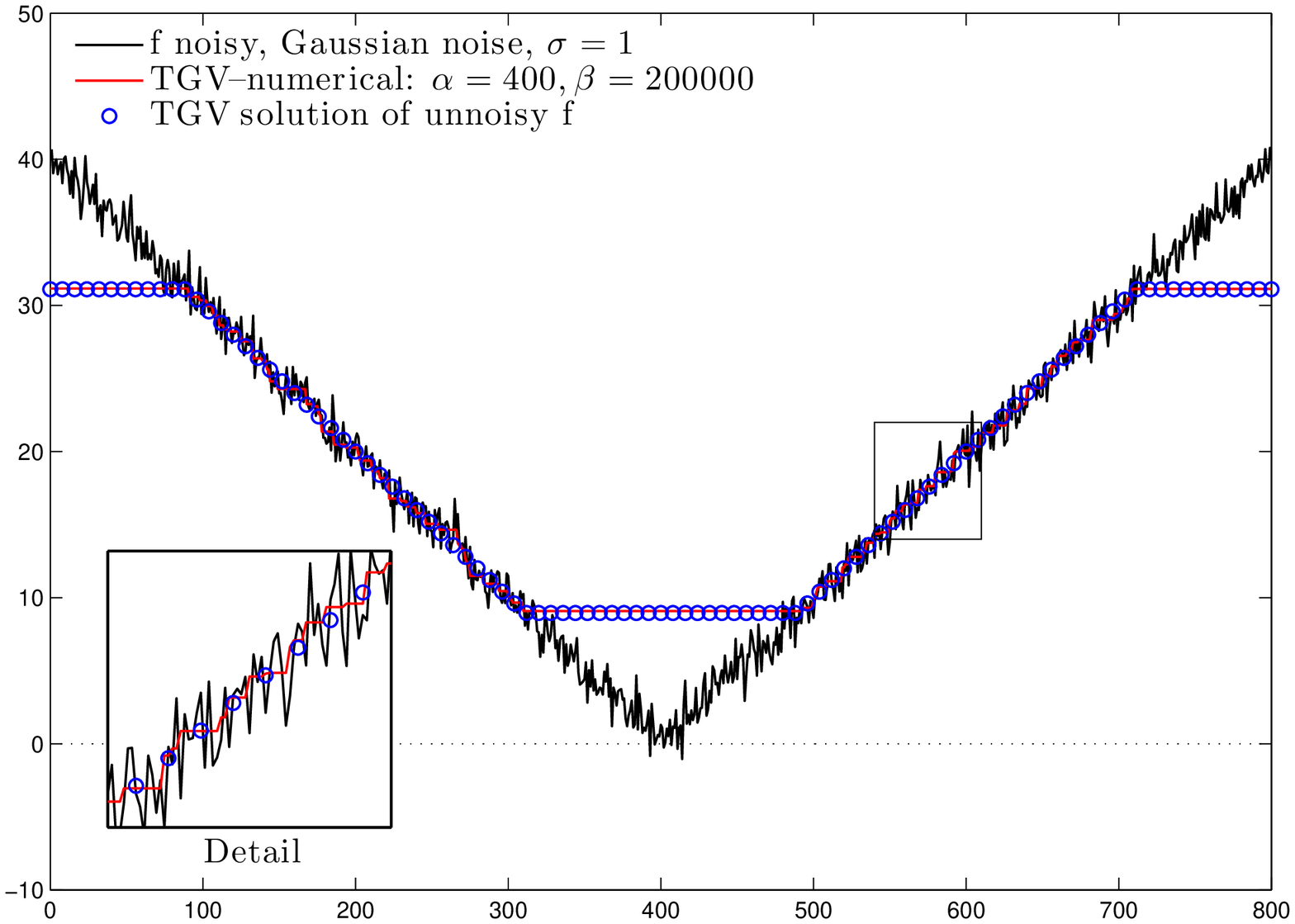}
}
\caption{Hat function $h$. The parameters $\alpha$ and $\beta$ satisfy the conditions \eqref{cec_cond}, thus the  solution is of the type \ref{cec}.}
\label{h_cec_label}
\end{center}
\end{figure}

\begin{figure}[h!]
\begin{center}
\subfloat[Clean data, the numerical solution agrees with the exact one.]{
\hspace{-0.3 cm}\includegraphics[scale=0.45]{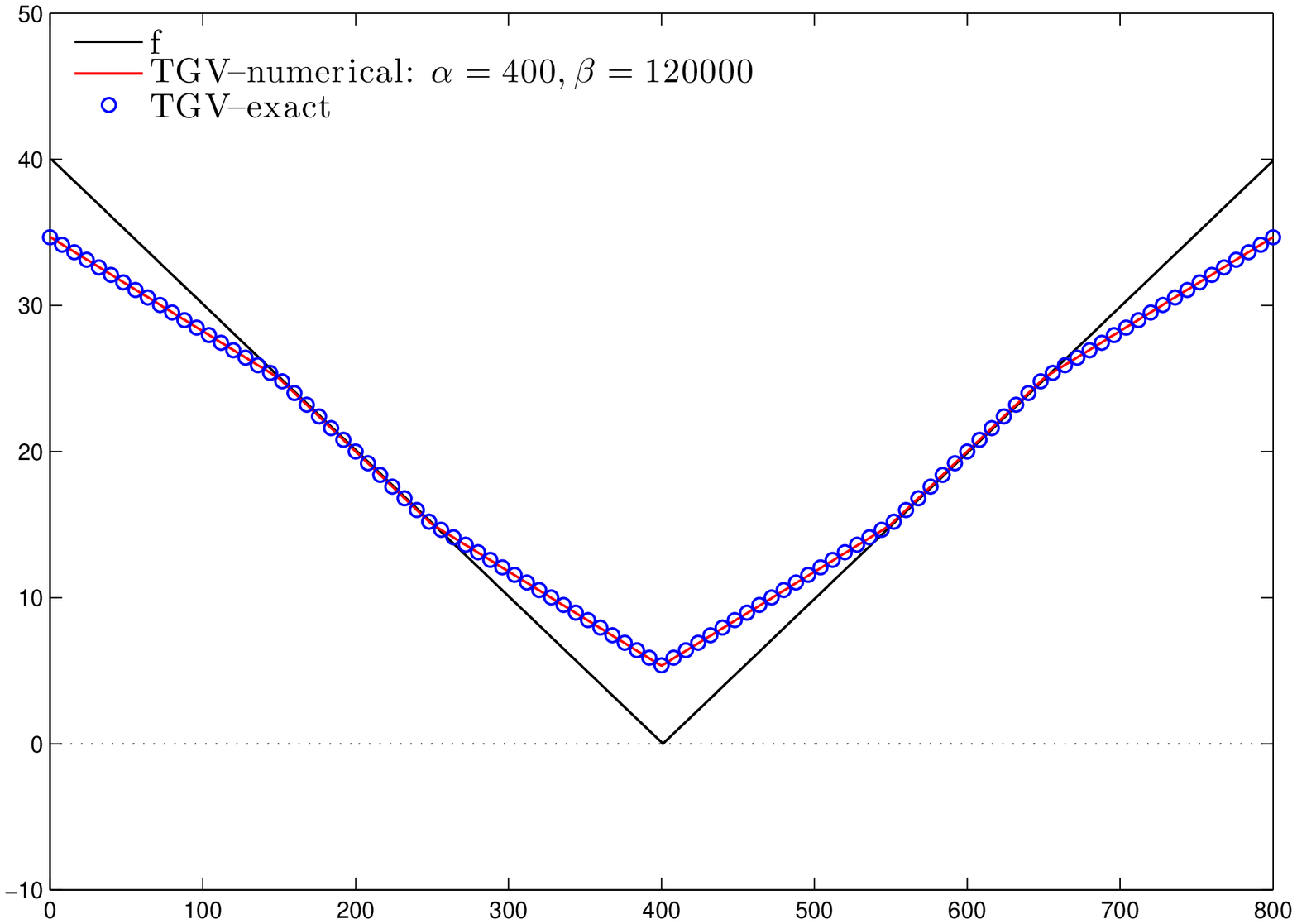}
}
\subfloat[Noisy data, appearance of an ``affine'' staircasing effect.]{
\includegraphics[scale=0.45]{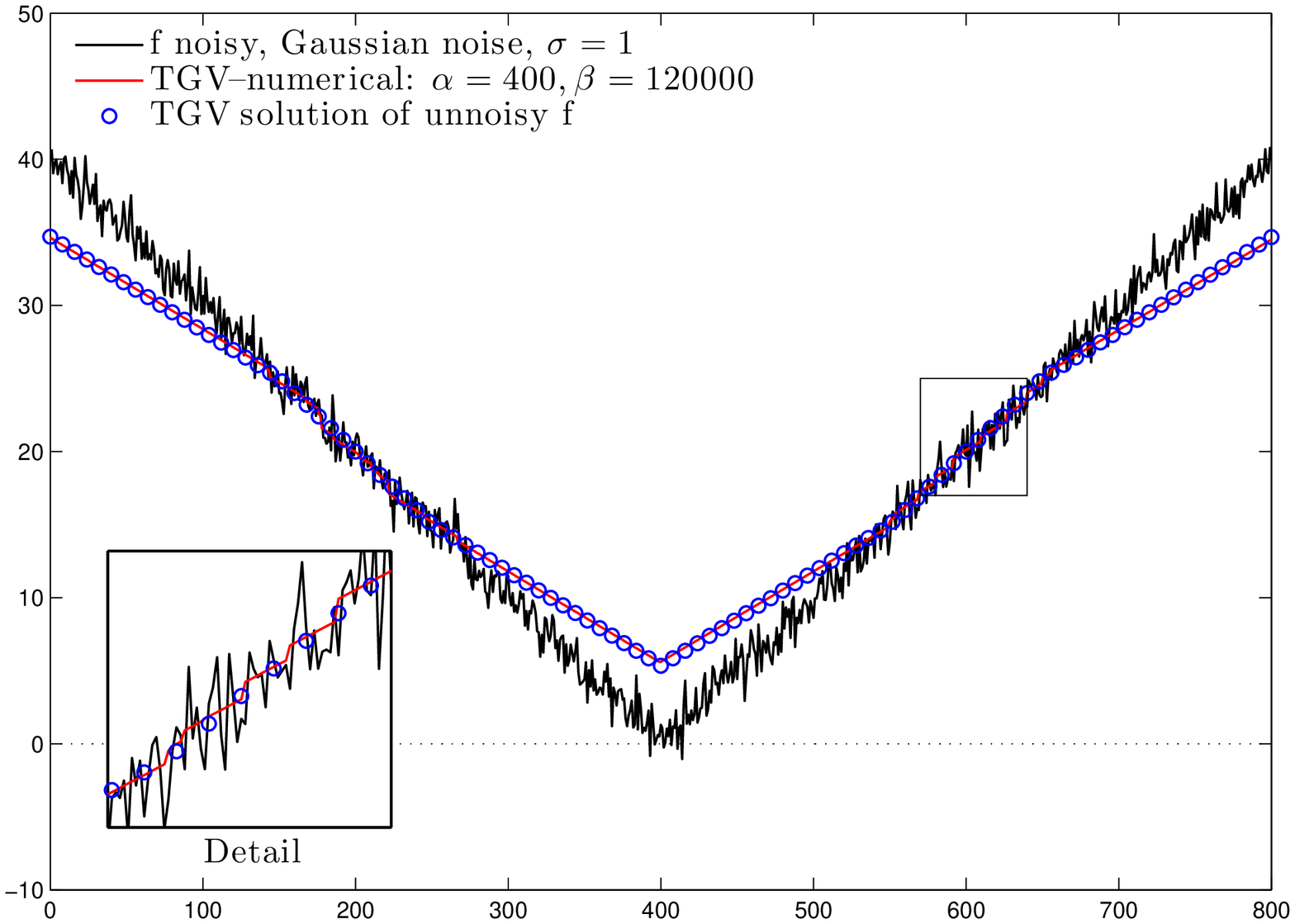}
}
\caption{Hat function $h$. The parameters $\alpha$ and $\beta$ satisfy the conditions \eqref{aea_cond}, thus the  solution is of the type \ref{aea}.}
\label{h_aea_label}
\end{center}
\end{figure}

\begin{figure}[h!]
\begin{center}
\subfloat[Clean data, the numerical solution agrees with the exact one.]{
\hspace{-0.3 cm}\includegraphics[scale=0.45]{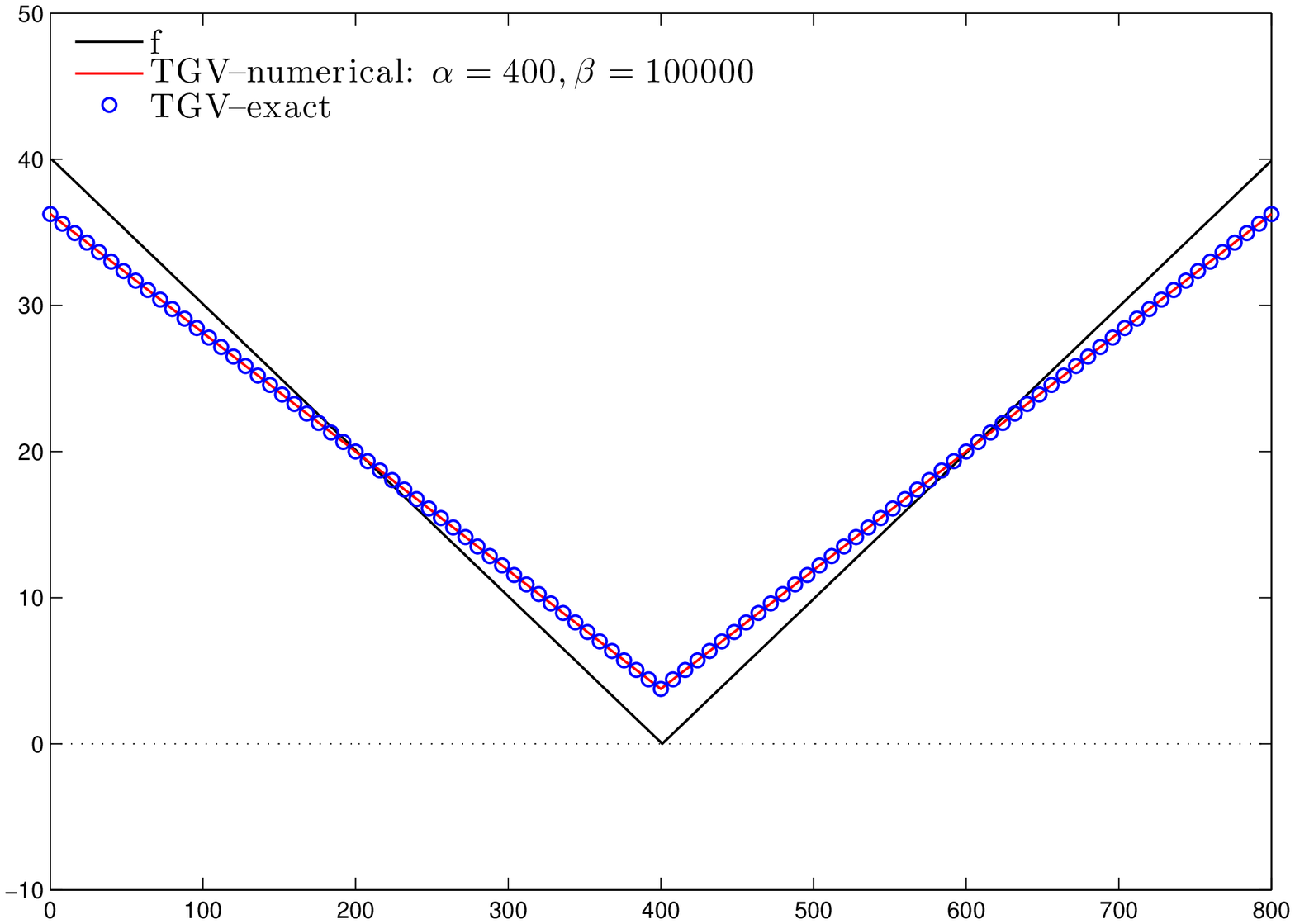}
}
\subfloat[Noisy data, the numerical solution deviates slightly from the corresponding exact solution with clean data.]{
\includegraphics[scale=0.45]{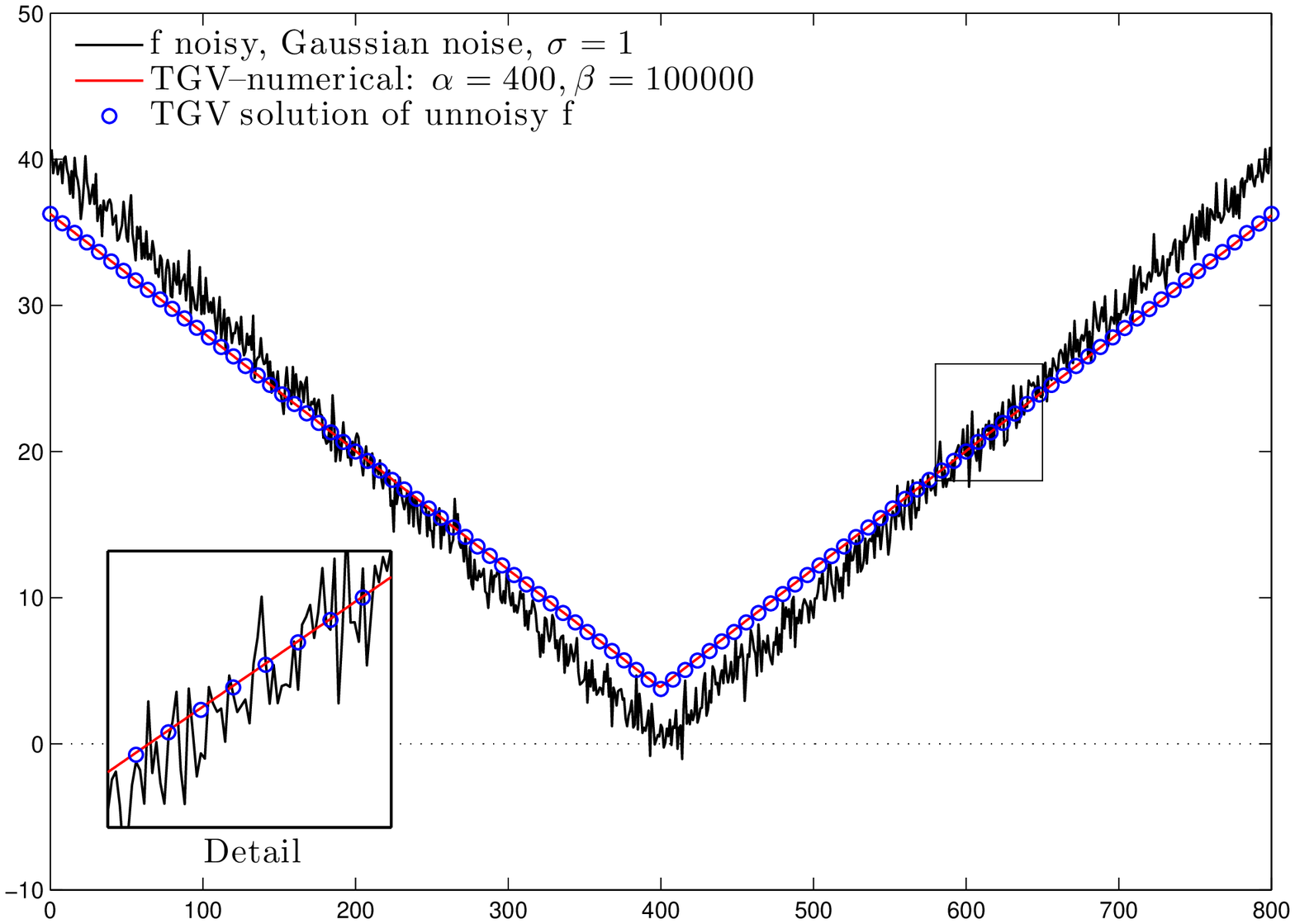}
}
\caption{Hat function $h$. The parameters $\alpha$ and $\beta$ satisfy the conditions \eqref{a_cond}, thus the  solution is of the type \ref{a}.}
\label{h_a_label} 
\end{center}
\end{figure}

\section{Conclusion}
We studied exact solutions to the one dimensional $L^{2}$--$\mathrm{TGV}_{\beta,\alpha}^{2}$ problem for simple piecewise constant, piecewise affine and hat functions as data terms. We used Fenchel--Rockafellar duality to derive the optimality conditions of the corresponding minimisation problem and we computed the exact solutions using these conditions. The relationship between the values of the parameters $\alpha$, $\beta$ and the structure of solutions was investigated. We performed numerical experiments in which the exact solutions agree with the numerical ones and having only a slight deviation when noise is added.

As far as further research is concerned, the analysis of the exact solutions of the corresponding 2D model should be the first priority as well as a more rigorous study of the problem under the presence of noise.

\subsection*{Acknowledgements}
The first author acknowledge the support of the UK Engineering and Physical Sciences Research Council (EPSRC) grant EP/H023348/1 for the University of Cambridge Centre for Doctoral Training, the Cambridge Centre for Analysis, the financial support provided by the EPSRC first grant Nr. EP/J009539/1 ``Sparse \& Higher-order Image Restoration'' and the Award No. \hspace{-4 pt}KUK-I1-007-43, made by King Abdullah University of Science and Technology (KAUST).

\bibliographystyle{amsalpha}

\bibliography{kostasbib}

\end{document}